\newcommand{\setword}[2]{%
	\phantomsection
	#1\def\@currentlabel{\unexpanded{#1}}\label{#2}%
}
\renewcommand*\env@matrix[1][*\c@MaxMatrixCols c]{%
	\hskip -\arraycolsep
	\let\@ifnextchar\new@ifnextchar
	\array{#1}}
\long\def\ignore#1{}
\let\oldi\ignore
\newtheorem{THM}{\textbf{Theorem}}[section]
\newtheorem{THMs}{\textbf{Theorem}}[section]
\newtheorem{DEF}[THM]{\textbf{Definition}}[section]
\newtheorem{LEM}[THM]{\textbf{Lemma}}
\newtheorem{CON}[THM]{\textbf{Conjecture}}
\newtheorem{PROP}[THM]{\textbf{Proposition}}
\newtheorem{COR}[THM]{\textbf{Corollary}}
\newtheorem{CORs}{\textbf{Corollary}}[section]
\newtheorem{PRO}[THM]{\textbf{Problem}}
\newcommand{\pf}{\textbf{Proof}.\quad}
\newtheorem{FAC}{\textbf{Fact}}
\newtheorem{REM}{\textbf{Remark}}
\newtheorem{OPR}{\textbf{Operation}}
\newtheorem{CLA}{\textbf{Claim}}[section]
\newtheorem{THM}{Theorem}[section]
\newtheorem{LEM}[THM]{Lemma}
\newtheorem{CON}[THM]{Conjecture}
\newtheorem{COR}[THM]{Corollary}
\newtheorem{CLA}{Claim}[section]
\newcommand{\pf}{\textbf{Proof}.\quad}
\newtheorem*{THM2}{\textbf{Theorem 2.4}}
\newtheorem*{THM3}{\textbf{Theorem 2.5}}
\newtheorem*{Ass}{\textbf{Assumption}}
\newcommand{\CC}{\mathcal{C}}
\newcommand{\Of}{\mathcal{O}}
\newcommand{\pbar}{\overline{\varphi}}
\newcommand{\arxiv}[1]{\href{http://arxiv.org/abs/#1}{\texttt{arXiv:#1}}}
\begin{document}
\title{The Core Conjecture of Hilton and Zhao II: a Proof}

\author{%
 Yan Cao\thanks{Department of Mathematics, 
 	West Virginia University, Morgantown, WV 26506, USA.  \texttt{yacao@mail.wvu.edu}.}
 \quad Guantao Chen\thanks{Department of Mathematics and Statistics, 
 	Georgia State University, Atlanta, GA 30302, USA.  \texttt{gchen@gsu.edu}.  Partially supported by NSF grant DMS-1855716.}\\
 \quad 
 Guangming Jing\thanks{ Department of Mathematics,
 	Augusta University, Augusta, GA 30912, USA. 
 	\texttt{gjing@augusta.edu}.  Partially supported by NSF grant DMS-2001130.}
 \quad 
 Songling Shan\thanks{Department of Mathematics, 
 	Illinois State Univeristy, Normal, IL 61790, USA. 
 	\texttt{sshan12@ilstu.edu}.  Partially supported by the NSF-AWM Mentoring Travel Grant 1642548 and by the New Faculty Initiative Grant of Illinois State University.}
 } 

\date{\today}
\maketitle

 \begin{abstract}
 A simple graph  $G$ with maximum degree $\Delta$ is  \emph{overfull} if $|E(G)|>\Delta \lfloor |V(G)|/2\rfloor$. The \emph{core} of $G$, denoted $G_{\Delta}$, is the subgraph of $G$ induced by its vertices of degree $\Delta$.  Clearly, the chromatic index of $G$ equals  $\Delta+1$ if $G$ is overfull. 
Conversely, Hilton and Zhao in 1996 conjectured  that if $G$ is a simple connected graph with $\Delta\ge 3$ and  $\Delta(G_\Delta)\le 2$, then $\chi'(G)=\Delta+1$  implies that $G$ is overfull or $G=P^*$, where $P^*$ is obtained from the Petersen graph by deleting a vertex.  Cariolaro and Cariolaro  settled the
base case $\Delta=3$ in 2003, and Cranston and Rabern proved  the next case $\Delta=4$ in 2019.   In this paper, we give a proof of this conjecture for all $\Delta\ge 4$.

 \smallskip
 \noindent
\textbf{MSC (2010)}: Primary 05C15\\ \textbf{Keywords:} Overfull graph,   Multifan, Kierstead path, Pseudo-multifan, Lollipop.

 \end{abstract}

%\vspace{2mm}
%\newpage 
%\tableofcontents
%%\listoffigures
%\newpage 

\section{Introduction}

Let $G$ be a simple graph with maximum degree $\Delta$. 
The \emph{core} of  $G$,  denoted $G_\Delta$, 
is the subgraph of $G$ induced by its vertices of degree $\Delta$.  If $|E(G)|>\Delta \lfloor |V(G)|/2\rfloor$, then $G$
is \emph{overfull}. Overfull graphs are class 2. The graph $P^*$,  
obtained from the Petersen graph by deleting one vertex,  
 is  also known to be class 2. 
Conversely,  in 1996, Hilton and Zhao~\cite{MR1395947} proposed the following  conjecture.
\begin{CON}[Core Conjecture]\label{Core Conjecture}
	Let $G$ be a simple       
	connected  graph with maximum degree $\Delta\ge 3$ and $\Delta(G_\Delta)\le 2$. 
	Then $G$ is class 2 implies that $G$ is overfull or $G=P^*$. 
\end{CON}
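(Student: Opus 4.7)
My plan is to argue by contradiction against a minimum counterexample. Let $G$ be a connected simple graph with $\Delta = \Delta(G) \ge 4$, $\Delta(G_\Delta)\le 2$, class 2, not overfull, and (among all such graphs) minimizing $|V(G)|+|E(G)|$; note that for $\Delta\ge 4$ the exceptional graph $P^*$, whose maximum degree is $3$, plays no role, so it suffices to produce overfullness. A standard minimality argument forces $G$ to be edge $\Delta$-critical, i.e.\ $\chi'(G-e)=\Delta$ for every $e\in E(G)$. In particular, Vizing's Adjacency Lemma and its strengthenings apply, so every $\Delta$-vertex has several $\Delta$-neighbors; combined with the hypothesis $\Delta(G_\Delta)\le 2$, the core $G_\Delta$ is a disjoint union of paths and cycles, and each vertex of degree $2$ in $G_\Delta$ has exactly two $\Delta$-neighbors and $\Delta-2$ low-degree neighbors.

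Next, I would fix an edge $e=xy$ with $y\in V(G_\Delta)$ together with a $\Delta$-edge-coloring $\varphi$ of $G-e$, and exploit the four coloring structures announced in the abstract: a \emph{multifan} rooted at $y$ (a Vizing fan enlarged using missing colors at $y$), a \emph{Kierstead path} (an ordered alternating walk that transports a missing color along a short chain of neighbors), a \emph{pseudo-multifan} (a multifan whose controlling vertex is allowed to have degree less than $\Delta$, giving extra freedom), and a \emph{lollipop} (a multifan whose handle is attached to a Kempe-type alternating cycle, specifically designed for cases where the core contains a cycle). By alternating $(\alpha,\beta)$-Kempe swaps with these structures, I expect to derive two kinds of conclusions: either $\varphi$ can be modified to a $\Delta$-coloring of $G$ itself, contradicting that $G$ is class 2, or else large families of $\Delta$-vertices must be pairwise adjacent and share a small common neighborhood.

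I would then use the structural information to force $G$ to be overfull. Concretely, the components of $G_\Delta$ consisting of isolated edges or short paths or short cycles are ruled out by direct multifan/Kierstead arguments, while longer paths and cycles are handled by iterating lollipop extensions along the component. What survives is a very rigid local picture in which every component of $G_\Delta$, together with the common off-core neighbors of its vertices forced by Vizing-type adjacency constraints, forms a dense subgraph $H$. A careful counting of edges across these components, using connectedness of $G$, should give $|E(G)|>\Delta \lfloor |V(G)|/2\rfloor$, contradicting the assumption that $G$ is not overfull.

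The main obstacle will be the lollipop analysis when a component of $G_\Delta$ is a long cycle $C$. In that case color information can travel all the way around $C$, so a single multifan rooted at one vertex of $C$ cannot simultaneously control what happens at the far end; the lollipop (multifan plus attached alternating cycle) is exactly the device for pulling those two pieces of control together. Keeping the lollipop invariants consistent as they propagate around $C$, while at the same time staying compatible with the pseudo-multifans at the low-degree neighbors attached to $C$, is the most delicate step, and is likely the reason this case resisted the techniques that already sufficed for $\Delta=3$ and $\Delta=4$.
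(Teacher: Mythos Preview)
Your outline has the right overall shape---work with a $\Delta$-critical counterexample, exploit multifan/Kierstead/pseudo-multifan/lollipop machinery to extract rigid local structure, then count edges to force overfullness---but two of your central tools are misidentified, and the actual structural lemmas that drive the argument are absent from your plan.

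First, the definitions. A \emph{pseudo-multifan} in this paper is not a multifan with a low-degree centre; it is a maximum multifan at a $\Delta$-vertex $r$ together with additional $(\Delta-1)$-neighbours of $r$ whose missing colours partition into rotations, the whole set $N_{\Delta-1}[r]$ being $\varphi'$-elementary under every stable recolouring $\varphi'$. A \emph{lollipop} is not a multifan glued to an alternating cycle; it is simply a multifan $F$ at $r$ extended by one edge $ru$ to a $\Delta$-neighbour $u$ and one further edge $ux$ to a $(\Delta-1)$-neighbour $x\notin V(F)$ of $u$. Its role is not to propagate colour information around a long cycle of $G_\Delta$; it is used to analyse the situation $N_{\Delta-1}(u)\ne N_{\Delta-1}(r)$ and to show (via non-adjacency lemmas of the type ``$u\not\sim s_{\mu-1},s_\mu$'') that this forces $u$ to have too many $(\Delta-1)$-neighbours outside $N_{\Delta-1}(r)$.

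Second, the structural backbone. The paper does not iterate lollipops along a component of $G_\Delta$; instead it proves directly three global statements: (i) any two adjacent $\Delta$-vertices have identical $(\Delta-1)$-neighbourhoods (so every cycle of $G_\Delta$ has a single common $(\Delta-1)$-neighbourhood, with no propagation needed); (ii) any two adjacent $(\Delta-1)$-vertices have identical $\Delta$-neighbourhoods; (iii) for $\Delta\ge 7$, two $\Delta$-vertices with distinct but intersecting $(\Delta-1)$-neighbourhoods must have symmetric difference of size exactly two. From (i)--(iii) one deduces that if any two $\Delta$-vertices have distinct $(\Delta-1)$-neighbourhoods then $V_{\Delta-1}$ is independent, and a short double count of edges between $N_{\Delta-1}(A)$ and the $\Delta$-vertices touching it yields $\Delta\le 2.5$. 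Your plan does not isolate any of these three lemmas, and your ``long cycle'' obstacle is not where the difficulty lies; the delicate work is in proving (i) via a cascade of two- and three-out-neighbour arguments using lollipops and near-stable rotations, and in proving (iii) via Kierstead paths of length three. Without stating and targeting results of this shape, the outline cannot be completed as written.
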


 As a class 2 graph of maximum degree 2 is an odd cycle
 and odd cycles are overfull,  if true, the Core Conjecture implies that 
for connected graphs $G$ with $\Delta(G_\Delta)\le 2$, 
determining whether $G$ is class 2 can be done by checking whether 
  $|E(G)| > \Delta \lfloor |V(G)|/2\rfloor$  if $G\ne P^*$. 
We call a connected class 2 graph $G$ with $\Delta(G_\Delta)\leq 2$ an \emph{HZ-graph}.   
A first breakthrough of the Core Conjecture was achieved in 2003, when Cariolaro and Cariolaro \cite{CariolaroC2003} settled the
base case $\Delta=3$. They proved that $P^*$ is the only HZ-graph with maximum degree $\Delta=3$, an alternative proof was given later by Kr\'al', Sereny, and Stiebitz (see \cite[pp. 67--63]{StiebSTF-Book}). The next case $\Delta=4$ was recently solved by Cranston and Rabern \cite{CranstonR2018hilton}: they proved that the only HZ-graph with maximum degree $\Delta=4$ is the graph  $K_5$ with one edge removed. In this paper, we confirm 
the Core Conjecture for all HZ-graphs $G$ with $\Delta\ge 4$. 
It worth mentioning that our proof implies a polynomial-time algorithm that, given $G$ with  maximum degree $\Delta \ge 4$ and $\Delta(G_\Delta)\le 2$, finds an optimal edge coloring of $G$. 
\begin{THM}\label{Thm:main}
Let  $G$ be a connected  graph with maximum degree $\Delta \ge 4$  and $\Delta(G_\Delta)\le 2$.  Then  $G$ is class 2 if and only if $G$ is overfull. 
\end{THM}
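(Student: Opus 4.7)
The plan is to argue by contradiction on a minimum counterexample. Let $G$ be a connected graph with $\Delta=\Delta(G)\ge 4$, $\Delta(G_\Delta)\le 2$, $\chi'(G)=\Delta+1$, and $G$ not overfull, chosen to minimize $|V(G)|+|E(G)|$. Since the case $\Delta=4$ is settled by Cranston and Rabern, we may assume $\Delta\ge 5$. Minimality forces $G$ to be edge-$\Delta$-critical: every proper subgraph admits a proper $\Delta$-edge-coloring, so in particular $G-e$ is $\Delta$-edge-colorable for every edge $e$. The hypothesis $\Delta(G_\Delta)\le 2$ means the core $G_\Delta$ is a disjoint union of paths and cycles, so every $\Delta$-vertex has at most two $\Delta$-neighbors; this rigidity of the core will be exploited repeatedly.

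Pick a critical edge $e=xy$ with at least one endpoint in $V(G_\Delta)$ and a $\Delta$-edge-coloring $\varphi$ of $G-e$. I would then invoke the four structural tools named in the keywords: the classical Vizing \emph{multifan} at $x$ centered on $xy$, a \emph{Kierstead path} extending such a fan along a Kempe chain, a \emph{pseudo-multifan} that allows the root to shift under compatible color swaps, and a \emph{lollipop} combining a fan-like core with a cycle of $\Delta$-vertices joined through a Kempe path. Each such object enforces rigid relationships among the colors missing at the vertices it visits and is stable under a restricted family of Kempe changes; moreover, each of them either directly recovers a $\Delta$-edge-coloring of $G$ (contradicting criticality) or propagates structure further into the core.

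The main engine of the proof is to iteratively enlarge the structure anchored at $xy$. Since $\Delta(G_\Delta)\le 2$, any $(\alpha,\beta)$-Kempe chain that meets the core is highly constrained: at every $\Delta$-vertex in its interior the two colors $\alpha,\beta$ are essentially the only colors available on edges to $\Delta$-neighbors. Exploiting this together with the multifan/Kierstead machinery, I would force the existence of a long lollipop whose handle traces a path of $\Delta$-vertices in $G_\Delta$ and whose candy is a cycle of $\Delta$-vertices. Each extension either produces the forbidden $\Delta$-edge-coloring of $G$ outright or appends a new $\Delta$-vertex, so the procedure terminates only when the lollipop already covers a controlled portion of $V(G_\Delta)$.

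The hardest part, I expect, will be converting the existence of this lollipop into a quantitative contradiction with non-overfullness. Since $G$ is not overfull, $|E(G)|\le \Delta\lfloor|V(G)|/2\rfloor$, which forces enough vertices of degree less than $\Delta$ to exist so that some such vertex $z$ lies in a position where its missing color set interacts usably with the color pattern enforced by the lollipop. The final step is then to show that a carefully chosen Kempe swap through $z$ destroys the lollipop and extends $\varphi$ to a proper $\Delta$-edge-coloring of $G$, contradicting $\chi'(G)=\Delta+1$. Bridging the local fan-theoretic obstruction and the global counting inequality is where the delicate case analysis intrinsic to Vizing-type proofs is concentrated, and is likely to consume the bulk of the argument; it is also where the novel \emph{lollipop} tool should earn its keep, since neither multifans nor Kierstead paths alone appear strong enough to couple the missing-color structure at $z$ with the rigid core passing through $xy$.
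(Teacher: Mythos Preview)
Your proposal is not a proof but a loose outline that misses the central structural ideas and misidentifies how the tools are used. First, a \emph{lollipop} in this paper is not ``a fan-like core with a cycle of $\Delta$-vertices joined through a Kempe path'': it is a very short object $L=(F,ru,u,ux,x)$ consisting of a multifan $F$ at $r$, a single $\Delta$-neighbor $u$ of $r$, and a single $(\Delta-1)$-neighbor $x$ of $u$ outside $F$. It is never iteratively grown along $G_\Delta$; rather, it is used locally to prove non-adjacencies such as $u\not\sim s_{\mu-1}$, $u\not\sim s_\mu$. Your plan to ``append a new $\Delta$-vertex'' at each step and eventually cover a large piece of $V(G_\Delta)$ has no analogue in the actual argument and it is unclear how it could be made to work, since the lollipop lemmas give no mechanism for extending the object past $x$.

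More importantly, the paper's proof does not proceed by a single Kempe swap through a low-degree vertex $z$ chosen via the non-overfull inequality. Instead, the multifan/pseudo-multifan/lollipop machinery is used to establish three rigid \emph{neighborhood-equality} theorems: adjacent $\Delta$-vertices share the same $(\Delta-1)$-neighborhood; adjacent $(\Delta-1)$-vertices share the same $\Delta$-neighborhood; and any two $\Delta$-vertices whose $(\Delta-1)$-neighborhoods differ but intersect must differ in exactly one vertex. These statements say nothing about Kempe chains in the endgame; they are purely about the bipartite adjacency structure between $V_\Delta$ and $V_{\Delta-1}$. The contradiction with non-overfullness is then obtained by elementary double counting of edges between $N_{\Delta-1}(A)$ and certain blocks $A,B_1,\dots,B_t\subseteq V_\Delta$, yielding $3(\Delta-2)^2\le(\Delta-1)(\Delta-2)$, which is absurd for $\Delta\ge 3$. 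Your outline contains none of these neighborhood-rigidity statements, and the proposed ``Kempe swap through $z$'' endgame is both unspecified and aimed at the wrong target: the global obstruction is combinatorial (edge counts), not a missing local recoloring.
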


Since every overfull graph is class 2, we will only prove the ``only if'' statement in Theorem~\ref{Thm:main}. The remainder of the paper is organized as follows. In next section, we prove 
Theorem~\ref{Thm:main} by applying Theorems~\ref{Thm:vizing-fan} to~\ref{Thm:nonadj_Delta_vertex}. 
In Section 3, we  give necessary definitions and list results from~\cite{HZI}. 
Theorems~\ref{Thm:vizing-fan} to~\ref{Thm:nonadj_Delta_vertex}
will be proved in Sections 4, 5, and 6, respectively. 
 
\section{Proof of Theorem~\ref{Thm:main}} 

In this section, we prove Theorem~\ref{Thm:main} by applying  Theorems~\ref{Thm:vizing-fan} to~\ref{Thm:nonadj_Delta_vertex}. 
We start with some  concepts. 
For two integers $p$ and $q$, let $[p,q]=\{i\in \mathbb{Z}: p\le i\le q\}$.
An  \emph{edge $k$-coloring} of $G$ is a mapping $\varphi$ from $E(G)$ to 
$[1,k]$, called \emph{colors}, such that  no two adjacent edges receive the same color.  We denote by  $\CC^k(G)$  the set of all edge $k$-colorings of $G$. 
The  \emph{chromatic index}  $\chi'(G)$ of $G$ is the smallest  $k$ so that $G$ has an edge $k$-coloring.  The symbol $\Delta$  is reserved for $\Delta(G)$, the maximum degree of $G$
throughout  this paper.

Let $G$ be a graph, $v\in V(G)$, and $i\ge 0$ be an integer.  
An \emph{$i$-vertex}  is a vertex of degree $i$
in $G$, and an $i$-vertex from the neighborhood of   $v$ is called an \emph{$i$-neighbor}  of $v$. 
Define 
$$ V_i=\{w\in V(G)\,:\, d_G(w)=i\} , \quad \quad N_{i}(v)=N_G(v)\cap V_i, \quad \,\mbox{and} \quad N_i[v]=N_i(v)\cup \{v\}. $$
For  $X\subseteq V(G)$,  let $\mathit {N_{G}(X)=\bigcup_{x\in X}N_G(x)}$ 
and $ \mathit {N_i(X)=N_G(X)\cap V_i}$.
%For $H\subseteq G$, we simple write $N_G(H)$ for $N_G(V(H))$. 
%If $H\subseteq G$,  we 
%write $\mathit {N_{i}(H)}$ for $N_{i}(V(H))$.  

Let  $e\in E(G)$ and 
$\varphi\in \CC^k(G-e)$ for  some integer $k\ge 0$. 
%For any edge $xy\in E(G)$, there exists a coloring $\varphi \in \CC^\Delta (G-xy)$.
The set of colors \emph{present} at $v$ is 
$\varphi(v)=\{\varphi(f)\,:\, \text{$f$ is incident to $v$}\}$, and the set of colors \emph{missing} at $v$ is $\pbar(v)=[1,k]\setminus\varphi(v)$.  If $\pbar(v)=\{\alpha\}$ is a singleton for some $\alpha\in [1,k]$, we  also write $\pbar(v)=\alpha$. 
%Note that by the $\Delta$-criticality of $G$, $\pbar(x)\cap \pbar(y)=\emptyset$. 
For  $X\subseteq V(G)$,  let
$
\pbar(X)=\bigcup _{x\in X} \pbar(x).
$
The set $X$ is  \emph{$\varphi$-elementary} if $\pbar(x)\cap \pbar(y)=\emptyset$
for any distinct  $x,y\in X$.  
%For  a  multigraph  $G$ with $|V(G)|\ge 3$, define its density
%$
%\omega(G)=\max_{X\subseteq V(G), |X|\ge 3 } \frac{|E(G[X])|}{\lfloor|X|/2\rfloor}, 
%$
%or zero by convention if $|V(G)|\le 2$. 
%If $\omega(G)>\Delta(G)$, then  $\omega(G)$ is achieved by some $X\subseteq V(G)$  with an odd cardinality. 
%Note that  $\omega(G)$ is a lower bound on $\chi'(G)$, 
%since every matching of $G$ contains at most $\lfloor|X|/2\rfloor$ edges 
%with both endpoints in $X$ for every $X\subseteq V(G)$.  

%A graph $G$ with $\chi'(G)=\lceil\omega(G)\rceil$
%is called an  \emph{elementary graph}. 
%Note that for $e\in E(G)$ and  $\varphi \in \CC^\Delta (G-e)$,  $V(G)$ is $\varphi$-elementary implies that $G$ is elementary. 
% Overfull graphs are certainly elementary. 
%All known HZ-graphs except $P^*$ are elementary. Hilton and Zhao in~\cite{MR1172373} proved that every HZ-graph also satisfies the following properties. 
An edge
$e\in E(G)$ is a \emph{critical edge} of $G$ if $\chi'(G-e)<\chi'(G)$, and  
$G$ is  {\it edge $\Delta$-critical} or simply  \emph{$\Delta$-critical}  if $G$ is connected,  $\chi'(G)=\Delta+1$,  and every of its edge is critical. 
 The following result by Hilton and Zhao in~\cite{MR1172373} reveals certain properties of 
 an HZ graph. 
\begin{LEM}\label{biregular}
	If $G$ is  an HZ-graph with maximum degree $\Delta$, then  the following  holds.
	\begin{enumerate}[(a)]
		\item $G$ is $\Delta$-critical and $G_\Delta$ is 2-regular. 
		\item $\delta(G)=\Delta-1$, or $\Delta=2$ and $G$ is an odd cycle. 
		\item Every vertex of $G$ has at least two neighbors in $G_\Delta$. 
	\end{enumerate}
\end{LEM}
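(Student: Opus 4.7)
The plan is to use Vizing's Adjacency Lemma (VAL) throughout: recall that in a $\Delta$-critical graph, for every edge $xy$ the vertex $y$ has at least $\max\{1, \Delta - d_G(x) + 1\}$ neighbors of degree $\Delta$. I would prove the parts in the order (a), then (c), then (b), since the later parts depend on structural features established earlier.

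For the criticality half of (a), I argue by contradiction. Since $G$ is connected and class 2, passing to a $\Delta$-critical spanning subgraph $H \subseteq G$ (obtained by iteratively deleting edges $e$ with $\chi'(G-e) = \Delta + 1$, which forces $\Delta(G-e) = \Delta$) yields a $\Delta$-critical graph with $\Delta(H_\Delta) \le \Delta(G_\Delta) \le 2$, since edges of $H_\Delta$ are a subset of edges of $G_\Delta$. If $E(G) \setminus E(H)$ is nonempty, pick $e = xy$ there; then $d_H(x), d_H(y) \le \Delta - 1$ (each endpoint loses at least the edge $e$) and $d_H(x), d_H(y) \ge 2$ (by criticality of $H$). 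Applying VAL inside $H$ to $x$ forces each $H$-neighbor of $x$ to have at least $\Delta - d_H(x) + 1 \ge 2$ neighbors in $V_\Delta(H)$; combined with $\Delta(H_\Delta) \le 2$ and the presence of the extra $G$-edge $xy$ between two non-$V_\Delta(H)$ vertices, one extracts a contradiction by tracking the degree constraints on the $V_\Delta(H)$-neighbors of $x$ and $y$.

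For the 2-regularity half of (a), with $G$ now $\Delta$-critical: pick any $v \in V_\Delta$. Since $d_G(v) = \Delta \ge 3 > 2 \ge d_{G_\Delta}(v)$, some neighbor $u$ of $v$ satisfies $d_G(u) < \Delta$; VAL on $uv$ gives $v$ at least $\Delta - d_G(u) + 1 \ge 2$ neighbors in $V_\Delta$, so combined with $d_{G_\Delta}(v) \le 2$ we get equality. For (c), any $v \in V_\Delta$ has two $V_\Delta$-neighbors by the 2-regularity just proved; any $v \notin V_\Delta$ either has some non-$\Delta$ neighbor $u$ (then VAL on $uv$ gives $v$ at least $\Delta - d_G(u) + 1 \ge 2$ neighbors in $V_\Delta$) or else all of its at least $\delta(G) \ge 2$ neighbors lie in $V_\Delta$. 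For (b), suppose $\delta \le \Delta - 2$; choose $x$ with $d_G(x) = \delta$, and by (c) pick $y \in N_G(x) \cap V_\Delta$. Applying VAL to $xy$ forces $y$ to have at least $\Delta - \delta + 1 \ge 3$ neighbors in $V_\Delta$, contradicting $d_{G_\Delta}(y) \le 2$. Hence $\delta \ge \Delta - 1$; and $\delta = \Delta$ would force $G$ to be $\Delta$-regular with $G_\Delta = G$, so $\Delta = \Delta(G_\Delta) \le 2$, giving $\Delta = 2$ and, by connectivity together with class 2, an odd cycle.

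The main obstacle is the criticality step in (a): showing that $G$ itself, and not merely its critical spanning subgraph $H$, contains no non-critical edges. The passage-to-critical-subgraph argument is standard, but ruling out the existence of edges in $E(G) \setminus E(H)$ requires carefully combining VAL inside $H$ with the global restriction $\Delta(G_\Delta) \le 2$ to derive a structural contradiction at the endpoints of any such leftover edge. Once criticality is in hand, the remaining parts fall out quickly from direct applications of VAL.
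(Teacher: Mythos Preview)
The paper does not give its own proof of this lemma; it attributes the result to Hilton and Zhao and states it without proof. Your approach via Vizing's Adjacency Lemma is the standard one, and your arguments for the 2-regularity in (a), for (c), and for (b) are correct.

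For the criticality half of (a) there is a gap. First, iteratively deleting non-critical edges does not produce a $\Delta$-critical \emph{spanning} subgraph: what survives is one $\Delta$-critical component $H$ together with possibly some isolated vertices, so your assertion that $d_H(y)\ge 2$ is unjustified when $y$ lies outside $V(H)$. Second, the contradiction you allude to (``tracking the degree constraints on the $V_\Delta(H)$-neighbors of $x$ and $y$'') is not spelled out, and focusing on the single leftover edge $xy$ does not directly close the case $y\notin V(H)$. The clean fix is to argue globally rather than edge-by-edge: once VAL in $H$ gives $\delta(H)\ge\Delta-1$ and that $H_\Delta$ is $2$-regular, take any $x\in V(H)$ with $d_H(x)=\Delta-1$ and any neighbor $z\in V_\Delta(H)$. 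Since $z\in V_\Delta(G)$ already has two $G_\Delta$-neighbors supplied by $H_\Delta$, the hypothesis $\Delta(G_\Delta)\le 2$ forces every $V_\Delta(G)$-neighbor of $z$ to lie in $V_\Delta(H)$; in particular $x\notin V_\Delta(G)$, so $d_G(x)=\Delta-1=d_H(x)$. Hence $d_G=d_H$ on all of $V(H)$, so no vertex of $H$ is incident with a $G$-edge outside $E(H)$, and connectivity of $G$ yields $G=H$. This is exactly ``looking at the $V_\Delta(H)$-neighbors,'' but the decisive move is to read off the constraint at $z$, not at $x$ or $y$.
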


Let $\Delta\ge4$ and let $\Of_\Delta$ be the set of all graphs obtained  
from two graphs $H_1$ and $H_2$ by adding all  edges between $V(H_1)$ and  $V(H_2)$, where   $H_1$ is any 2-regular graph on $n_1$ vertices, 
$H_2$ is any $(\Delta-1-n_1)$-regular graph on $(\Delta-2)$ vertices, and   $n_1 \in [3,\Delta-1]$ such that $n_1+(\Delta-2)$ is odd.   
Stiebitz et al.  showed that 
Conjecture~\ref{Core Conjecture} is equivalent to the conjecture below. 
\begin{CON}[{\cite[Conjecture 4.10]{StiebSTF-Book}}]\label{core-conjecture-2}
	If $G$ is an HZ-graph with maximum degree $\Delta$,  then 
	either $G\in \Of_\Delta$, or 
	$\Delta=2$ and $G$ is an odd cycle, or $\Delta=3$ and $G=P^*$. 
\end{CON}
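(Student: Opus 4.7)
The plan is to dispatch the small-$\Delta$ cases by invoking the classical results cited in the introduction and then reduce the case $\Delta\ge 4$ to a short counting and parity argument built on Theorem~\ref{Thm:main}.

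For $\Delta=2$ every HZ-graph is a connected $2$-regular class $2$ graph, hence an odd cycle; for $\Delta=3$ the Cariolaro--Cariolaro theorem gives $G=P^{\ast}$. Both outcomes appear in Conjecture~\ref{core-conjecture-2}, so only the case $\Delta\ge 4$ is substantive. Fix such an HZ-graph $G$. By Theorem~\ref{Thm:main}, $G$ is overfull, so $|V(G)|$ is odd. Write $H_1:=G_\Delta$, $H_2:=G-V(H_1)$, and $n_i:=|V(H_i)|$. Lemma~\ref{biregular} yields that $H_1$ is $2$-regular, every $u\in V(H_2)$ has $d_G(u)=\Delta-1$, and every such $u$ has at least two neighbors in $V(H_1)$.

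Each $v\in V(H_1)$ contributes $\Delta-2$ edges to $V(H_2)$, so $n_2\ge \Delta-2$; substituting $2|E(G)|=\Delta n_1+(\Delta-1)n_2$ into the overfull inequality yields $n_2<\Delta$. The intermediate case $n_2=\Delta-1$ is eliminated by a parity clash: counting the edges between $V(H_1)$ and $V(H_2)$ from both sides gives
\[
n_1(\Delta-2) \;=\; (\Delta-1)^2 - 2|E(H_2)|,
\]
so $(\Delta-1)^2-n_1(\Delta-2)$ must be even. If $\Delta$ is even this difference is odd, an immediate contradiction; if $\Delta$ is odd, evenness forces $n_1$ to be even, while the oddness of $|V(G)|=n_1+\Delta-1$ simultaneously forces $n_1$ to be odd.

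Hence $n_2=\Delta-2$. Then every $v\in V(H_1)$ is adjacent to every $u\in V(H_2)$, so the bipartite part $[V(H_1),V(H_2)]$ is complete; each $u\in V(H_2)$ therefore has $d_{H_1}(u)=n_1$, making $H_2$ a $(\Delta-1-n_1)$-regular graph on $\Delta-2$ vertices. The bounds $n_1\in[3,\Delta-1]$ and the odd parity of $n_1+(\Delta-2)$ follow from $H_1$ being $2$-regular, $H_2$ being simple, and $|V(G)|$ being odd; this places $G$ in $\Of_\Delta$. The only substantive step is Theorem~\ref{Thm:main}, so the real difficulty is entirely packaged into the proof of that theorem, which occupies Sections~4--6 and relies on the edge-coloring apparatus of multifans, Kierstead paths, pseudo-multifans, and lollipops.
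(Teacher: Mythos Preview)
Your counting and parity argument is clean and correct as a self-contained implication: it shows that any HZ-graph with $\Delta\ge 4$ that is overfull must lie in $\Of_\Delta$. The problem is circularity. In this paper, Theorem~\ref{Thm:main} is \emph{not} proved independently in Sections~4--6; those sections establish Theorems~\ref{Thm:vizing-fan}, \ref{Thm:adj_small_vertex}, and \ref{Thm:nonadj_Delta_vertex}, and Section~2 then uses them to prove Theorem~\ref{mainthm2}, which is precisely the $\Delta\ge 4$ case of Conjecture~\ref{core-conjecture-2}. Only \emph{after} that does Theorem~\ref{Thm:main} follow, via the cited equivalence of Conjectures~\ref{Core Conjecture} and~\ref{core-conjecture-2}. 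So your invocation of Theorem~\ref{Thm:main} to deduce Conjecture~\ref{core-conjecture-2} reverses the paper's logical flow and assumes what is to be proved.

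What you have actually written is a direct proof of one half of the Stiebitz et al.\ equivalence (overfull HZ-graph $\Rightarrow$ member of $\Of_\Delta$), which the paper simply cites. That is a worthwhile observation, but it does not replace the paper's argument. The paper's route for $\Delta\ge 4$ is to go straight from the structural Theorems~\ref{Thm:vizing-fan}--\ref{Thm:nonadj_Delta_vertex} to $G\in\Of_\Delta$ (Theorem~\ref{mainthm2}): either all $\Delta$-vertices share a common $(\Delta-1)$-neighborhood (and a short elementary/parity argument via Theorem~\ref{Thm:vizing-fan}\eqref{ele} finishes), or they do not, and then Corollary~\ref{Thm:adj_small_vertex2} together with a double count of the edges between $N_{\Delta-1}(A)$ and $A\cup B$ yields a contradiction. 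If you want a non-circular proof, you must argue from Theorems~\ref{Thm:vizing-fan}--\ref{Thm:nonadj_Delta_vertex} directly, as the paper does.
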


We will  prove this equivalent form of the Core Conjecture for $\Delta\ge 4$  by applying the following results.

\begin{THM}\label{Thm:vizing-fan}
	If $G$ is an HZ-graph with maximum degree $\Delta\ge 4$, then the following two statements hold.
	\begin{enumerate}[(i)]
		\item For any two  adjacent vertices  $u,v\in V_{\Delta}$, $N_{\Delta-1}(u)=N_{\Delta-1}(v)$.  \label{common}
		\item For any $r\in V_\Delta$, there exist  $s\in N_{\Delta-1}(r)$
		and  
		$\varphi\in \CC^\Delta(G-rs)$ such that $N_{\Delta-1}[r]$ 
		is $\varphi$-elementary.  
		\label{ele}
	\end{enumerate}
\end{THM}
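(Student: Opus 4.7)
My plan is to leverage the rigid local structure of HZ-graphs from Lemma~\ref{biregular} together with the multifan/Kempe-chain machinery developed in the companion paper~\cite{HZI}. Recall that, under the hypothesis $\Delta\ge 4$, every vertex of $V_\Delta$ has exactly two neighbors in $V_\Delta$ (since $G_\Delta$ is $2$-regular) and therefore exactly $\Delta-2$ neighbors in $V_{\Delta-1}$; moreover $G$ is $\Delta$-critical, so for every edge $e$ of $G$ there exists $\varphi\in\CC^\Delta(G-e)$ against which fan and Kempe arguments can be executed. Under such a $\varphi$ the missing-color sets have predictable sizes: a vertex of $V_\Delta$ incident to the uncolored edge misses one color, a vertex of $V_{\Delta-1}$ incident to it misses two, and every other vertex of $V_{\Delta-1}$ misses one.

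For part~(\ref{common}), I would fix adjacent $u,v\in V_\Delta$ and prove $N_{\Delta-1}(u)\subseteq N_{\Delta-1}(v)$; the reverse inclusion follows by symmetry, and equality is then forced by the common cardinality $\Delta-2$. Suppose for contradiction that $w\in N_{\Delta-1}(u)$ is not adjacent to $v$. Choose $\varphi\in\CC^\Delta(G-uv)$; then $\pbar(u)=\alpha$ and $\pbar(v)=\beta$ with $\alpha\ne\beta$, for otherwise $uv$ could be colored and $G$ would be class~1. I plan to build a multifan at $u$ rooted at the uncolored edge $uv$ that also incorporates $w$ via one of the two colors in $\pbar(w)$, invoke the elementary property of multifans from~\cite{HZI} to force $\pbar(v)\cap\pbar(w)=\emptyset$, and then perform an $\alpha\beta$- or $\beta\gamma$-Kempe swap based at $w$ to extend to a $\Delta$-edge-coloring of $G$, contradicting $\chi'(G)=\Delta+1$.

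For part~(\ref{ele}), given $r\in V_\Delta$, I plan to construct the desired coloring in stages. Pick any $s'\in N_{\Delta-1}(r)$ (which exists by Lemma~\ref{biregular}(b)) and any $\varphi\in\CC^\Delta(G-rs')$, and grow a multifan $F$ at $r$ rooted at $rs'$ that is maximum with respect to $N_{\Delta-1}(r)$; the elementary property of~\cite{HZI} yields pairwise disjoint missing color sets on $V(F)$. If $V(F)\supseteq N_{\Delta-1}[r]$ we are done, so assume some $w\in N_{\Delta-1}(r)$ lies outside $F$. Using part~(\ref{common}) to rigidify the structure around the two $\Delta$-neighbors of $r$, together with the Kierstead-path, pseudo-multifan, and lollipop extensions from~\cite{HZI}, I would iteratively recolor so as to absorb the stray vertices of $N_{\Delta-1}(r)$ into $F$ (possibly exchanging the uncolored edge for a different $rs$ with $s\in N_{\Delta-1}(r)$) while preserving elementarity. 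The main obstacle will lie here: showing that whenever a maximum multifan at $r$ fails to cover $N_{\Delta-1}[r]$, at least one of the recoloring gadgets of~\cite{HZI} applies in the HZ-setting, so that no obstruction to an elementary $N_{\Delta-1}[r]$ survives. This is the delicate case analysis where the hypotheses $\Delta\ge 4$ and part~(\ref{common}) are both essential.
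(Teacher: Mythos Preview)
Your plan has a genuine gap in part~(\ref{common}), and the sketch there would not go through as written.

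First, a factual slip: if you uncolor the edge $uv$ between two $\Delta$-vertices, then a vertex $w\in V_{\Delta-1}$ \emph{not} incident to that edge satisfies $|\pbar(w)|=1$, not $2$. More seriously, with $|\pbar(v)|=1$ a multifan at $u$ rooted at $uv$ is extremely thin: the second vertex must be the unique neighbor $z$ of $u$ with $\varphi(uz)=\beta$, and there is no reason this is $w$, nor even that $z\in V_{\Delta-1}$. The concluding step (``perform an $\alpha\beta$- or $\beta\gamma$-Kempe swap based at $w$ to extend to a $\Delta$-edge-coloring'') is not an argument; nothing in the setup forces the relevant Kempe chain to avoid $u$ or $v$, and without that no extension is produced. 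None of the specialized tools in~\cite{HZI} (Lemmas~\ref{Lemma:extended multifan}--\ref{Lem:2-non-adj2}) are stated for a multifan rooted at an edge between two $\Delta$-vertices, so you cannot simply invoke them here.

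The paper takes a different route that you should adopt. It does \emph{not} uncolor $uv$; instead it fixes $r\in V_\Delta$, uncolors an edge $rs_1$ with $s_1\in N_{\Delta-1}(r)$, and chooses a \emph{maximum} typical $2$-inducing multifan $F$ at $r$. A neighbor $u\in N_\Delta(r)$ then appears with $\varphi(ru)$ equal to the last $2$-inducing color $\alpha+1$. Assuming $x\in N_{\Delta-1}(u)\setminus N_{\Delta-1}(r)$ exists, one arranges $\pbar(x)=\alpha+1$ and works with the lollipop $L=(F,ru,u,ux,x)$. The non-adjacency Lemmas~\ref{Lem:2-non-adj1} and~\ref{Lem:2-non-adj2} then force $u$ to miss at least two vertices of $V(F)$, hence $|N_{\Delta-1}(u)\setminus N_{\Delta-1}(r)|\ge 2$; a second pass (using Lemma~\ref{Lemma:pseudo-fan0} to control rotations) escalates this to $\ge 3$, after which a short Kempe-chain argument yields a contradiction. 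This escalation $1\Rightarrow 2\Rightarrow 3\Rightarrow$contradiction is the heart of the proof and is entirely absent from your outline.

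For part~(\ref{ele}), your intuition is closer, but the logical order is reversed. The paper first reduces~(\ref{ele}) to~(\ref{common}): once~(\ref{common}) is known, if $N_{\Delta-1}[r]$ failed to be elementary one could recolor so that some $z\in N_{\Delta-1}(r)\setminus V(F)$ has $\pbar(z)=\alpha+1$; but then $z\in N_{\Delta-1}(u)$ by~(\ref{common}), the lollipop $(F,ru,u,uz,z)$ exists, and the same non-adjacency lemmas give $N_{\Delta-1}(u)\ne N_{\Delta-1}(r)$, a contradiction. So one does not ``absorb stray vertices into $F$''; rather, elementarity of $N_{\Delta-1}[r]$ is shown directly (yielding a pseudo-multifan), and the multifan $F$ itself may remain strictly smaller.
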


For an HZ-graph $G$ with maximum degree $\Delta\ge 4$,  each component of $G_\Delta$ is a cycle by Lemma~\ref{biregular}. So Theorem~\ref{Thm:vizing-fan}~(\ref{common}) implies that $N_{\Delta-1}(x)=N_{\Delta-1}(y)$ for any two  vertices $x,y$   from the same cycle of $G_\Delta$. 
%The following results says that an analogous property holds also 
%for vertices from $V_{\Delta-1}$. 

%\begin{THM}\label{Thm:adj_Delta_vertex}
%	
%\end{THM}

\begin{THM}
	\label{Thm:adj_small_vertex}
	If $G$ is an HZ-graph with maximum degree $\Delta\ge 4$,  then for any two  adjacent vertices $x, y\in V_{\Delta-1}$, $N_\Delta(x)=N_\Delta(y)$.
\end{THM}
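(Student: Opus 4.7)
The plan is to argue by contradiction. Assume there exist adjacent vertices $x, y \in V_{\Delta-1}$ with $N_\Delta(x) \neq N_\Delta(y)$, and by symmetry fix a vertex $z \in N_\Delta(x) \setminus N_\Delta(y)$. Since $G$ is $\Delta$-critical by Lemma~\ref{biregular}, the edge $xy$ is critical, so there is some $\varphi \in \CC^\Delta(G - xy)$. I would exploit the interplay between this coloring, the pair $xy$, and the specific $\Delta$-vertex $z$ to contradict criticality of $xy$.

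First I would set up the color bookkeeping at $x$ and $y$: since $d_G(x) = d_G(y) = \Delta - 1$, we have $|\pbar(x)| = |\pbar(y)| = 2$, and the two sets must be disjoint (else $xy$ could be colored with a common missing color). Writing $\pbar(x) = \{\alpha_1, \alpha_2\}$ and $\pbar(y) = \{\beta_1, \beta_2\}$, for every pair $(\alpha_i, \beta_j)$ the $(\alpha_i, \beta_j)$-Kempe chain through $x$ must also pass through $y$; otherwise swapping colors on it produces a coloring in which $\alpha_i$ is missing at both $x$ and $y$, and $xy$ could then be colored. This yields a rigid web of four Kempe chains linking $x$ and $y$. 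On the other hand, since $z \in V_\Delta$ the missing set $\pbar(z)$ is a singleton $\{\delta\}$, and $\gamma := \varphi(xz)$ belongs to $\varphi(x)$.

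Next I would use the fact that $z$ is not a neighbor of $y$ to break this rigidity. The idea is to build a short fan or Kierstead path at $x$ based at the uncolored edge $xy$ with $z$ as an intermediate vertex, and to apply a carefully chosen Kempe swap in one of the color pairs $(\delta, \alpha_i)$, $(\delta, \beta_j)$, or $(\gamma, \beta_j)$ originating at $z$. Because $y \notin N_G(z)$, such a swap can be arranged so as to produce a new coloring $\varphi' \in \CC^\Delta(G - xy)$ in which one of the forced $(\alpha_i, \beta_j)$-chains through $x$ no longer reaches $y$, contradicting the rigid web.

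The hard part will be the case analysis on the positions of $\delta$ and $\gamma$ relative to $\{\alpha_1, \alpha_2\} \cup \{\beta_1, \beta_2\}$ and on the global interaction between the Kempe chains through $z$ and those linking $x$ to $y$. The trickiest subcase is presumably the one in which a swap intended to separate $x$ from $y$ on one chain inadvertently joins them on another. To handle this I anticipate invoking Theorem~\ref{Thm:vizing-fan}~(\ref{ele}) at $z$, which provides a coloring with $N_{\Delta-1}[z]$ elementary and therefore keeps the rearrangements globally consistent, together with the identity $N_{\Delta-1}(z) = N_{\Delta-1}(z')$ from Theorem~\ref{Thm:vizing-fan}~(\ref{common}) applied to a $\Delta$-neighbor $z'$ of $z$ on the $G_\Delta$-cycle through $z$, to transport the contradiction back to the local structure around $xy$. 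The technical tools---multifans, pseudo-multifans, and lollipops---should be the ones developed in Section~3.
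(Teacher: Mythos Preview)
Your setup diverges from the paper's at the very first move, and the divergence is where the gap lies. You delete the edge $xy$ and try to argue in $\CC^\Delta(G-xy)$, whereas the paper deletes an edge $rs_1$ incident to the $\Delta$-vertex $r$ (your $z$) and works inside the pseudo-multifan supplied by Theorem~\ref{Thm:vizing-fan2b}\,\eqref{ele2}. Concretely, the paper takes $r\in N_\Delta(x)\setminus N_\Delta(y)$, invokes Theorem~\ref{Thm:vizing-fan2b}\,\eqref{ele2} to obtain $s_1\in N_{\Delta-1}(r)$ and $\varphi\in\CC^\Delta(G-rs_1)$ so that $N_{\Delta-1}[r]$ is the vertex set of a typical $2$-inducing pseudo-multifan $S$; in particular $\pbar(S)=[1,\Delta]$. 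Since $x\in N_{\Delta-1}(r)\subseteq V(S)$ and $y\notin V(S)$, a short sequence of $(1,\cdot)$-swaps at $y$ forces $\pbar(y)$ to coincide with the missing color at the appropriate vertex of $S$, after which the single edge $xy$ becomes a two-vertex Kempe chain $P_x(\cdot,\cdot)=xy$. This immediately contradicts the linking guarantees of Lemma~\ref{thm:vizing-fan2} or Lemma~\ref{pseudo-fan-ele}, depending on whether $x\in V(F)$ or $x\in V(S)\setminus V(F)$.

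Your plan, by contrast, never gets traction. In $\CC^\Delta(G-xy)$ the vertex $z$ is simply a $\Delta$-neighbor of $x$ with one missing color; the bare fact $y\notin N_G(z)$ gives no control over how the four $(\alpha_i,\beta_j)$-chains between $x$ and $y$ route through the rest of the graph, so the sentence ``such a swap can be arranged'' is exactly the content that needs proof. Your proposed rescue---invoking Theorem~\ref{Thm:vizing-fan}\,\eqref{ele} at $z$---produces a coloring of $G-zs$ for some particular $s\in N_{\Delta-1}(z)$, not of $G-xy$, so it is not usable inside your framework; nor is there an evident way to transport its elementary set back to your $\varphi$. The fix is to abandon $G-xy$ altogether: start from $G-rs_1$ as above, place $x$ inside the pseudo-multifan, and use the edge $xy$ only at the very end to manufacture the short chain that violates Lemma~\ref{pseudo-fan-ele}\,\eqref{pseudo-b}--\eqref{pseudo-c} or Lemma~\ref{thm:vizing-fan2}.
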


%The second result shows that if two $\Delta$-vertices are non-adjacent in $G$
%and they share some but not all $(\Delta-1)$-neighbors,  then they share exactly $(\Delta-3)$ 
%$(\Delta-1)$-neighbors. 

\begin{THM}
	\label{Thm:nonadj_Delta_vertex}
	Let $G$ be an HZ-graph with maximum degree $\Delta\ge 7$ and $u, r\in V_{\Delta}$.
	If $N_{\Delta-1}(u)\ne N_{\Delta-1}(r)$ and $N_{\Delta-1}(u)\cap  N_{\Delta-1}(r)\ne \emptyset$,
	then $|N_{\Delta-1}(u)\cap  N_{\Delta-1}(r)|=\Delta-3$, i.e. $|N_{\Delta-1}(u)\setminus N_{\Delta-1}(r)|=|N_{\Delta-1}(r)\setminus  N_{\Delta-1}(u)|=1$. 
\end{THM}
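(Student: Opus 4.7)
The plan is to argue by contradiction. Assume $|N_{\Delta-1}(u)\setminus N_{\Delta-1}(r)|\ge 2$ and combine Theorem~\ref{Thm:vizing-fan} with the multifan, Kierstead-path, pseudo-multifan, and lollipop machinery of~\cite{HZI} to force an impossible color configuration. First I record structure: by Lemma~\ref{biregular}, $G_\Delta$ is $2$-regular, so each of $u,r$ has exactly two $\Delta$-neighbors (namely its two neighbors on its cycle of $G_\Delta$) and consequently $|N_{\Delta-1}(u)|=|N_{\Delta-1}(r)|=\Delta-2$. By Theorem~\ref{Thm:vizing-fan}(\ref{common}), any two adjacent $\Delta$-vertices share the same $V_{\Delta-1}$-neighborhood; applied transitively along each cycle of $G_\Delta$, this shows that all $\Delta$-vertices on a single cycle share one $V_{\Delta-1}$-neighborhood, so $u$ and $r$ must lie on distinct cycles of $G_\Delta$ and in particular $ur\notin E(G)$. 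Now invoke Theorem~\ref{Thm:vizing-fan}(\ref{ele}) at $r$ to obtain $s\in N_{\Delta-1}(r)$ and $\varphi\in\CC^\Delta(G-rs)$ with $N_{\Delta-1}[r]$ being $\varphi$-elementary; in $\varphi$, the vertex $r$ misses one color, $s$ misses two (since $rs$ is uncolored), and each of the other $\Delta-3$ vertices of $N_{\Delta-1}(r)$ misses exactly one, so $\pbar(N_{\Delta-1}[r])=[1,\Delta]$ saturates the entire palette.

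Fix a common neighbor $x\in N_{\Delta-1}(u)\cap N_{\Delta-1}(r)$ and two distinct vertices $y_1,y_2\in N_{\Delta-1}(u)\setminus N_{\Delta-1}(r)$ supplied by the contradiction hypothesis. The core of the argument is to construct, at $u$ anchored on the edge $ux$ and routed through the common neighborhood, a multifan---or, whenever a plain multifan step is obstructed, one of the Kierstead-path, pseudo-multifan, or lollipop hybrids of~\cite{HZI}---whose Kempe swaps transform $\varphi$ into a $\Delta$-coloring $\varphi'$ of $G-rs$ under which $N_{\Delta-1}[r]$ remains elementary but the missing colors at $y_1$ and $y_2$ (each outside $N_{\Delta-1}[r]$ and each contributing one color) collide with the saturated assignment $\pbar(N_{\Delta-1}[r])=[1,\Delta]$, forcing some color to be missed at two distinct vertices of the combined configuration. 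Equivalently, the surgery produces an elementary set whose total missing-color demand is at least $\Delta+2$, exceeding the $\Delta$ colors available and yielding the desired contradiction, so the assumed inequality must fail and $|N_{\Delta-1}(u)\cap N_{\Delta-1}(r)|=\Delta-3$ follows.

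The main obstacle I expect is carrying out the Kempe surgery at $u$ without disturbing the $\varphi$-elementarity of $N_{\Delta-1}[r]$ handed to us by Theorem~\ref{Thm:vizing-fan}(\ref{ele}). Every alternating $(\alpha,\beta)$-path used in the multifan construction at $u$ has to be prevented from terminating in a way that duplicates a missing color on $N_{\Delta-1}(r)$, and this forces a case analysis indexed by which of the vertices $s,x,y_1,y_2$ (and their $\Delta$-neighbors on the two cycles of $G_\Delta$) a given alternating path encounters first, with a different one of the four hybrid structures from~\cite{HZI} handling each case. The hypothesis $\Delta\ge 7$ enters precisely here: it supplies the color-availability margin needed for the alternating-path surgery to be executable uniformly across all cases, whereas with fewer colors the tight saturation $\pbar(N_{\Delta-1}[r])=[1,\Delta]$ leaves too little room to route the swaps without collisions.
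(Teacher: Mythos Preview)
This is a plan, not a proof, and the central heuristic you propose does not survive contact with the details.

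First, the ``elementary overflow'' idea cannot work as stated. The colors missing at $y_1,y_2$ necessarily lie in $[1,\Delta]=\pbar(N_{\Delta-1}[r])$, so a collision with that palette is automatic and not a contradiction. A contradiction only arises if you can force $N_{\Delta-1}[r]\cup\{y_1,y_2\}$ (or some appropriate subset) to be $\varphi'$-elementary, and that requires a concrete structural witness---a specific multifan, Kierstead path, or pseudo-multifan containing those vertices---together with the relevant lemma (Lemma~\ref{thm:vizing-fan1}, \ref{Lemma:kierstead path1}, or \ref{pseudo-fan-ele}). You never build such a witness; you only assert that one of the four hybrids will handle each case. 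That is exactly the content of the theorem.

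Second, ``construct a multifan at $u$ anchored on $ux$'' is not available: the coloring $\varphi$ lives in $\CC^\Delta(G-rs)$, so $ux$ is colored and $u$ misses nothing. The paper does not recentre at $u$. It keeps the pseudo-multifan at $r$, arranges that the missing-edge vertex $s_1$ lies in $N_{\Delta-1}(u)\cap N_{\Delta-1}(r)$, and then studies the star on $us_1,ux,uy$ for two vertices $x,y\in N_{\Delta-1}(u)\setminus N_{\Delta-1}(r)$. The contradictions come not from a count but from specific linking failures: after a sequence of Kempe swaps one obtains a Kierstead path $K=(r,rs_1,s_1,s_1u,u,uy,y)$ and Lemma~\ref{Lemma:kierstead path1}(b) forces $y$ and $s_1$ to be $(2,\delta)$-linked, which contradicts Lemma~\ref{thm:vizing-fan2}\eqref{thm:vizing-fan2-a} or Lemma~\ref{pseudo-fan-ele}\eqref{pseudo-b} depending on the inducing type of $\delta$. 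The argument bifurcates first on whether the pseudo-multifan at $r$ is strictly larger than its contained multifan, and then (in the harder case $V(S)=V(F)$) on the missing colors at $x,y$ and the inducing types of $\varphi(ux),\varphi(uy),\varphi(s_1u)$; each branch is a concrete chain of swaps ending in one of the two linking lemmas above.

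Finally, $\Delta\ge 7$ is not a generic slack hypothesis. It is used once, to guarantee $|N_{\Delta-1}(u)\cap N_{\Delta-1}(r)|\ge \Delta-4\ge 3$, so that in the terminal subcase one can replace $u$ by another vertex $u^*$ on its $G_\Delta$-cycle with $\varphi(u^*y)\notin\{\tau,\lambda\}$ and fall back to an earlier case. Your proposal does not isolate this step, and without it the argument does not close.
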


%\textbf{{\noindent \large Acknowledgements}}

\begin{COR}
	\label{Thm:adj_small_vertex2}
	If $G$ is an HZ-graph with maximum degree $\Delta\ge 7$  and there exist $u, v\in V_{\Delta}$ such that  $N_{\Delta-1}(u)\ne N_{\Delta-1}(v)$, 
	then $V_{\Delta-1}$ is an independent set in $G$.
\end{COR}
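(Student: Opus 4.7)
Suppose for contradiction that some $x, y \in V_{\Delta-1}$ are adjacent. By Theorem~\ref{Thm:adj_small_vertex}, $S := N_\Delta(x) = N_\Delta(y)$, and $|S| \leq \Delta - 2$. For each cycle $C$ of $G_\Delta$ write $A_C := N_{\Delta-1}(w)$ for any $w \in C$ (well-defined by Theorem~\ref{Thm:vizing-fan}(\ref{common})). The first tool is an \emph{all-or-nothing} lemma: every vertex of $A_C$ is adjacent to every vertex of $C$. It follows by a single double-count of the $|C|(\Delta-2)$ edges between $C$ and $V_{\Delta-1}$, which all go into $A_C$ of size $\Delta-2$, forcing each $A_C$-vertex to have degree exactly $|C|$ into $C$. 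A direct consequence is that $N_\Delta(v)$ is a union of entire cycles of $G_\Delta$ for every $v \in V_{\Delta-1}$, and in particular $S$ is such a union.

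The technical core is to show $V_\Delta = S$. Split the cycles of $G_\Delta$ into $\mathcal{C}_1 = \{C : C \subseteq S\}$ and $\mathcal{C}_0 = \{C : C \not\subseteq S\}$; by the all-or-nothing lemma, $\{x,y\} \subseteq A_C$ for $C \in \mathcal{C}_1$ and $\{x,y\} \cap A_C = \emptyset$ for $C \in \mathcal{C}_0$. For $C_1 \in \mathcal{C}_1$ and $C_0 \in \mathcal{C}_0$ this gives $A_{C_1} \ne A_{C_0}$, and any intersection avoids $\{x,y\}$, yielding $|A_{C_1} \cap A_{C_0}| \leq \Delta - 4$; hence $A_{C_1} \cap A_{C_0} = \emptyset$ by Theorem~\ref{Thm:nonadj_Delta_vertex}. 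Declaring $C \sim C'$ iff $A_C \cap A_{C'} \ne \emptyset$ defines an equivalence relation: transitivity follows from the inclusion-exclusion bound $|A_C \cap A_{C''}| \geq (\Delta-3)+(\Delta-3)-(\Delta-2) = \Delta-4 \geq 3$ (here using $\Delta \geq 7$). Combining the all-or-nothing lemma with Theorem~\ref{Thm:adj_small_vertex}, each $\sim$-class $P$ yields a vertex set $\bigl(\bigcup_{C \in P} C\bigr) \cup \bigl(\bigcup_{C \in P} A_C\bigr)$ closed under $G$-neighborhoods, so connectedness of $G$ forces a single $\sim$-class; together with the $\mathcal{C}_0$--$\mathcal{C}_1$ disjointness above this gives $\mathcal{C}_0 = \emptyset$, i.e., $V_\Delta = S$.

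With $V_\Delta = S$ established, invoke the hypothesis to pick cycles $C_1, C_2$ with $A_{C_1} \ne A_{C_2}$; both lie in $S$, so $\{x,y\} \subseteq A_{C_1} \cap A_{C_2}$ and Theorem~\ref{Thm:nonadj_Delta_vertex} gives $|A_{C_1} \setminus A_{C_2}| = 1$, say $A_{C_1} \setminus A_{C_2} = \{a\}$. Then $a$ is adjacent to every vertex of $C_1$ but to none of $C_2$, so $|N_\Delta(a)| \leq |V_\Delta| - |C_2| \leq (\Delta-2) - 3 = \Delta - 5$, and $a$ has at least four $(\Delta-1)$-neighbors. But any such neighbor $w$ satisfies $N_\Delta(w) = N_\Delta(a)$ by Theorem~\ref{Thm:adj_small_vertex}, hence $w \in A_{C_1} \setminus A_{C_2} = \{a\}$, a contradiction.

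The main obstacle is the middle paragraph: one must set up the equivalence $\sim$, verify transitivity (the only place that genuinely uses $\Delta \geq 7$), check closedness of each class's vertex set under taking neighbors in $G$, and then combine these with connectivity to pin down $V_\Delta = S$. Once this reduction is in place, the final contradiction is essentially a one-line degree count.
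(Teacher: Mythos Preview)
Your argument is correct, but it follows a genuinely different route from the paper's proof.

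The paper never shows $V_\Delta = S$. Instead, starting from an adjacent pair $x,y\in V_{\Delta-1}$, it picks $w\in N_\Delta(x)$ and a vertex $w'\in V_\Delta$ with $N_{\Delta-1}(w)\ne N_{\Delta-1}(w')$ at \emph{minimum distance} from $w$; an examination of a shortest $w$--$w'$ path (using Theorems~\ref{Thm:vizing-fan}(\ref{common}) and~\ref{Thm:adj_small_vertex}) forces $N_{\Delta-1}(w)\cap N_{\Delta-1}(w')\ne\emptyset$ directly, with no equivalence relation or connectivity argument. Theorem~\ref{Thm:nonadj_Delta_vertex} then gives $|N_{\Delta-1}(w)\cap N_{\Delta-1}(w')|=\Delta-3$, and together with Theorem~\ref{Thm:adj_small_vertex} one gets $x,y\in N_{\Delta-1}(w)\cap N_{\Delta-1}(w')$. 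Letting $\{z\}=N_{\Delta-1}(w')\setminus N_{\Delta-1}(w)$, the paper shows $N_{\Delta-1}(z)=\emptyset$ and $N_\Delta(z)\subseteq N_\Delta(x)$ (the latter via another application of Theorem~\ref{Thm:nonadj_Delta_vertex}), yielding the strict degree inequality $d_G(z)\le |N_\Delta(x)|<|N_\Delta(x)\cup\{y\}|\le d_G(x)$, a contradiction.

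Your approach replaces the shortest-path trick by a global structural reduction: the equivalence $C\sim C'\Leftrightarrow A_C\cap A_{C'}\ne\emptyset$ (with transitivity from inclusion--exclusion inside $A_{C'}$) plus closedness of each class under taking neighbours shows, via connectedness, that all cycles of $G_\Delta$ lie in $S=N_\Delta(x)$; you then finish with a counting contradiction at the unique vertex $a\in A_{C_1}\setminus A_{C_2}$. Two small remarks: your ``all-or-nothing lemma'' is immediate from Theorem~\ref{Thm:vizing-fan}(\ref{common}) without the double count, and transitivity actually only needs $\Delta-4\ge 1$ rather than $\ge 3$ (the hypothesis $\Delta\ge 7$ is already required by Theorem~\ref{Thm:nonadj_Delta_vertex}). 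The paper's proof is shorter and more local; yours yields, as a byproduct, the stronger structural statement $V_\Delta=N_\Delta(x)$ and a clean decomposition of $G$ into blocks indexed by the $\sim$-classes, which may be of independent interest.
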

\pf
Assume to the contrary that there exist $x,y\in V_{\Delta-1}$ such that $xy\in E(G)$. By Lemma~\ref{biregular}, there exists $w\in N_\Delta(x)$.  By the assumption that there exist $u,v\in V_\Delta$ such that $N_{\Delta-1}(u)\ne N_{\Delta-1}(v)$,  there exists some $w'\in V_\Delta$ such that $N_{\Delta-1}(w)\ne N_{\Delta-1}(w')$.
We may further assume that the distance between $w$
and $w'$ in $G$ is shortest among all pairs of vertices $w_1$
and $w_1'$ such that $w_1\in N_\Delta(x)$ and $N_{\Delta-1}(w_1)\ne N_{\Delta-1}(w_1')$. 
 We claim that  $N_{\Delta-1}(w)\cap N_{\Delta-1}(w')\ne\emptyset$.  Let $P$ be a shortest path 
connecting $w$ and $w'$ in $G$. By the choice of $w$
and $w'$, $(V(P)\cap V_\Delta)\setminus \{w,w'\}$ contains no vertex $w^*$ such that $N_{\Delta-1}(w^*)=N_{\Delta-1}(w)$. 
Consequently, $V_\Delta\cap V(P)=\{w,w'\}$. 
Since $N_{\Delta-1}(w)\ne N_{\Delta-1}(w')$, it follows that 
$w$ and $w'$ are not on the same cycle of $G_\Delta$ and so $ww'\not\in E(G)$ by  Theorem~\ref{Thm:vizing-fan} (i). Thus $P-\{w,w'\}$ has at least one vertex. By Theorem~\ref{Thm:adj_small_vertex}, all vertices of  $P-\{w,w'\}$ have in $G$ the same set of neighbors from $V_\Delta$. Thus, both $w$ and $w'$
are $\Delta$-neighbors of each vertex from $P-\{w,w'\}$
and so 
 $N_{\Delta-1}(w)\cap N_{\Delta-1}(w')\ne\emptyset$. 
By Theorem~\ref{Thm:nonadj_Delta_vertex}, we  have 
$
|N_{\Delta-1}(w)\cap N_{\Delta-1}(w')|=\Delta-3,   
$
which together with Theorem~\ref{Thm:adj_small_vertex} implies
$
x,y\in N_{\Delta-1}(w)\cap N_{\Delta-1}(w').
$

Let	$N_{\Delta-1}(w')\setminus N_{\Delta-1}(w)=\{z\}$. We claim that $N_{\Delta-1}(z)=\emptyset$. For otherwise, let $z'\in N_{\Delta-1}(z)$. Clearly $z'\not=z$. By Theorem~\ref{Thm:adj_small_vertex}, $z'\in N_{\Delta-1}(w')\setminus N_{\Delta-1}(w)$, giving a contradiction to $N_{\Delta-1}(w')\setminus N_{\Delta-1}(w)=\{z\}$. We then claim that $N_\Delta(z)\subseteq N_\Delta(x)$. For otherwise let $w^*\in N_\Delta(z)\setminus N_\Delta(x)$. As $x\in N_{\Delta-1}(w')$ and $x\not\in N_{\Delta-1}(w^*)$, it follows that $w^*\ne w'$. Since $z\in N_{\Delta-1}(w^*)\cap N_{\Delta-1}(w')$, it follows that $|N_{\Delta-1}(w^*)\cap N_{\Delta-1}(w')|\ge \Delta-3$ by Theorem~\ref{Thm:nonadj_Delta_vertex} (it can happen that $N_{\Delta-1}(w^*)=N_{\Delta-1}(w')$). Thus $N_{\Delta-1}(w^*)\cap N_{\Delta-1}(w')$ contains at least one of $x$ and $y$ as $x,y\in  N_{\Delta-1}(w')$. As $xy\in E(G)$, we have $x,y\in N_{\Delta-1}(w^*)\cap N_{\Delta-1}(w')$  by Theorem~\ref{Thm:adj_small_vertex}. This  gives a contradiction to the choice of $w^*$. Therefore we have $N_{\Delta-1}(z)=\emptyset$ and $N_\Delta(z)\subseteq N_\Delta(x)$.  However,  $d_G(z)\le |N_\Delta(x)|<|N_\Delta(x)\cup \{y\}|\le d_G(x)$, contradicting $d_G(x)=d_G(z)=\Delta-1$. This completes the proof. \qed

We now prove Conjecture~\ref{core-conjecture-2} for $\Delta\ge 4$ as below. 

\begin{THM}\label{mainthm2}
	If $G$ is an HZ-graph with maximum degree $\Delta\ge 4$, then 
	$G\in \Of_\Delta$. 
\end{THM}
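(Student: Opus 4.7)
The plan is to pin down the structure of $G$: that $|V_{\Delta-1}|=\Delta-2$, that every $\Delta$-vertex is adjacent to every $(\Delta-1)$-vertex, that $G[V_{\Delta-1}]$ is regular, and that $|V(G)|$ is odd. By Lemma~\ref{biregular}, $G_\Delta$ is already 2-regular and $\delta(G)=\Delta-1$, and the case $\Delta=4$ is handled by the Cranston--Rabern result, so assume $\Delta\ge 5$. The main dichotomy is whether all vertices of $V_\Delta$ share a common $(\Delta-1)$-neighborhood.

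In the uniform case, where $N_{\Delta-1}(u)$ is a fixed set $S$ for every $u\in V_\Delta$: Lemma~\ref{biregular}(c) says every $v\in V_{\Delta-1}$ has a $\Delta$-neighbor, so $v\in S$, giving $S=V_{\Delta-1}$. Since each $u\in V_\Delta$ already has two neighbors in $V_\Delta$, the remaining $\Delta-2$ neighbors fill $V_{\Delta-1}$, so $|V_{\Delta-1}|=\Delta-2$. Each $v\in V_{\Delta-1}$ is therefore adjacent to all $n_1:=|V_\Delta|$ vertices of $V_\Delta$ and has $\Delta-1-n_1$ neighbors inside $V_{\Delta-1}$, making $G[V_{\Delta-1}]$ regular of degree $\Delta-1-n_1$; hence $n_1\in[3,\Delta-1]$. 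For parity, apply Theorem~\ref{Thm:vizing-fan}(\ref{ele}): pick $r\in V_\Delta$, obtain $s\in V_{\Delta-1}$ and $\varphi\in\CC^\Delta(G-rs)$ with $N_{\Delta-1}[r]=\{r\}\cup V_{\Delta-1}$ being $\varphi$-elementary. The counts $|\pbar(r)|=1$, $|\pbar(s)|=2$, and $|\pbar(v)|=1$ for the other $\Delta-3$ vertices of $V_{\Delta-1}$ sum to $\Delta$, so the missing-color sets partition $[1,\Delta]$. With $\{\alpha\}=\pbar(r)$: every $u\in V_\Delta\setminus\{r\}$ retains degree $\Delta$ in $G-rs$, so $|\pbar(u)|=0$, while the partition gives $\alpha\notin\pbar(v)$ for every $v\in V_{\Delta-1}$. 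Thus the $\alpha$-color class is a matching saturating $V(G)\setminus\{r\}$, forcing $|V(G)|-1$ to be even, i.e.\ $|V(G)|$ odd, and $G\in\Of_\Delta$ as required.

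The main obstacle is ruling out the non-uniform case, where some $u,v\in V_\Delta$ have $N_{\Delta-1}(u)\ne N_{\Delta-1}(v)$. For $\Delta\ge 7$, Corollary~\ref{Thm:adj_small_vertex2} forces $V_{\Delta-1}$ to be independent, so each $v\in V_{\Delta-1}$ has all $\Delta-1$ of its neighbors in $V_\Delta$ and each $u\in V_\Delta$ has $\Delta-2$ neighbors in $V_{\Delta-1}$, giving $n_1(\Delta-2)=n_2(\Delta-1)$. Partition $V_\Delta$ into classes $C_1,\dots,C_k$ by common $(\Delta-1)$-neighborhood $N_i$; Theorem~\ref{Thm:vizing-fan}(\ref{common}) makes each class a union of cycle-components of $G_\Delta$, and Theorem~\ref{Thm:nonadj_Delta_vertex} says distinct $N_i,N_j$ are either disjoint or share exactly $\Delta-3$ vertices. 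If they are pairwise disjoint, then any $v\in N_i$ has $V_\Delta$-neighbors only in $C_i$, forcing $|C_i|=\Delta-1$; the blocks $C_i\cup N_i$ then become connected components of $G$, contradicting connectivity. If some pair overlaps, a vertex in $N_i\cap N_j$ has $V_\Delta$-degree at least $|C_i|+|C_j|\ge 6$, forcing $c_i+c_j\le\Delta-1$; combined with $|N_i\cup N_j|\le n_2$ and tracking where the unique elements of $N_i\setminus N_j$ must route through further classes, iterated application of Theorem~\ref{Thm:nonadj_Delta_vertex} yields a forbidden configuration.

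For $\Delta\in\{5,6\}$ the crucial Corollary~\ref{Thm:adj_small_vertex2} is unavailable, so these two values must be dispatched by a direct argument using only Theorems~\ref{Thm:vizing-fan} and~\ref{Thm:adj_small_vertex} together with Lemma~\ref{biregular}, exploiting that the number of admissible $G_\Delta$-component patterns and the rigidity forced by Theorem~\ref{Thm:adj_small_vertex} leaves only a short list of candidate graphs to eliminate. This ad hoc small-$\Delta$ step, together with the overlap sub-case in the non-uniform case for $\Delta\ge 7$, is where I expect the main technical work; once the non-uniform case is ruled out, the uniform case above finishes the theorem cleanly.
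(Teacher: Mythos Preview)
Your uniform case is essentially the paper's argument and is fine. The genuine gap is in how you split off small $\Delta$. You treat $\Delta\in\{4,5,6\}$ as requiring separate ad hoc work (invoking Cranston--Rabern for $\Delta=4$ and promising a direct case-check for $\Delta=5,6$), but the paper observes that these values simply cannot occur in the non-uniform case. Here is the missed step: if $N_{\Delta-1}(u)\ne N_{\Delta-1}(v)$ for some $u,v\in V_\Delta$, connectivity of $G$ lets you choose such $u,v$ with $N_{\Delta-1}(u)\cap N_{\Delta-1}(v)\ne\emptyset$ (walk along a shortest path between two classes). Theorem~\ref{Thm:vizing-fan}(\ref{common}) then puts $u,v$ on distinct cycles $C_u,C_v$ of $G_\Delta$, and any $w\in N_{\Delta-1}(u)\cap N_{\Delta-1}(v)$ is adjacent to every vertex of both cycles, so $d_G(w)\ge |V(C_u)|+|V(C_v)|\ge 6$, forcing $\Delta\ge 7$. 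Thus the non-uniform case is automatically in the range where Corollary~\ref{Thm:adj_small_vertex2} and Theorem~\ref{Thm:nonadj_Delta_vertex} apply, and no separate small-$\Delta$ analysis is needed.

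For the non-uniform case with $\Delta\ge 7$, your outline (``iterated application of Theorem~\ref{Thm:nonadj_Delta_vertex} yields a forbidden configuration'') is too vague to assess, and the paper's route is sharper. Fix one class $A=\{a:N_{\Delta-1}(a)=N_{\Delta-1}(u)\}$ and let $B$ be all $b\in V_\Delta$ with $N_{\Delta-1}(b)\ne N_{\Delta-1}(u)$ but $N_{\Delta-1}(b)\cap N_{\Delta-1}(u)\ne\emptyset$, partitioned into classes $B_1,\dots,B_t$. A pigeonhole on the $\Delta-2$ vertices of $N_{\Delta-1}(A)$ against the $t$ single-element sets $N_{\Delta-1}(A)\setminus N_{\Delta-1}(B_i)$ forces $t\ge\Delta-2$ (otherwise some $z\in N_{\Delta-1}(A)$ lies in every $N_{\Delta-1}(B_i)$, and comparing $d_G(z)$ with $d_G(z')$ for $z'\in N_{\Delta-1}(A)\setminus N_{\Delta-1}(B_1)$ gives a degree contradiction). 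Then a double count of edges between $A\cup B$ and $N_{\Delta-1}(A)$ gives $3(\Delta-2)^2\le(\Delta-1)(\Delta-2)$, i.e.\ $\Delta\le 5/2$, a contradiction. This replaces your open-ended ``tracking where the unique elements route'' with a two-line counting argument.
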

\begin{proof}

	Assume to the contrary that there exists an HZ-graph $G$ with maximum degree $\Delta\ge 4$ such that $G\not\in \Of_\Delta$. Let $n=|V(G)|$.  First assume that $N_{\Delta-1}(u)=N_{\Delta-1}(v)$ for every pair $u,v\in V_\Delta$.   Then $V_\Delta$, $V_{\Delta-1}$ and the edges between them form a complete bipartite graph. Since  $G\not\in \Of_\Delta$, it follows that $n$ is even. Let $r\in V_\Delta$. The  assumption above also implies that  $N_{\Delta-1}(r)=V_{\Delta-1}$. By Theorem~\ref{Thm:vizing-fan} \eqref{ele}, there exist $s\in N_{\Delta-1}(r)=V_{\Delta-1}$ and 
	$\varphi\in \CC^\Delta(G-rs)$ such that   
	$N_{\Delta-1}[r]=V_{\Delta-1}\cup \{r\}$ is $\varphi$-elementary, which thereby implies that $V(G)$ is $\varphi$-elementary. Therefore, each color in $\pbar(N_{\Delta-1}[r])$ is missed  at exactly one vertex in $V(G)$, showing that $n$ is odd. 
	This is
	a contradiction.

	We now assume that there exist $u, v\in V_{\Delta}$ such that  $N_{\Delta-1}(u)\ne N_{\Delta-1}(v)$. We further assume that $N_{\Delta-1}(u)\cap N_{\Delta-1}(v) \ne \emptyset$ (using the same argument to find $u$ and $v$ as for finding $w$ and $w'$ in the proof of Corollary~\ref{Thm:adj_small_vertex2}). 
	By Theorem~\ref{Thm:vizing-fan} \eqref{common},  the cycle  $C_u$ containing $u$ and the cycle $C_v$ containing $v$
	from $G_\Delta$ are distinct. Let $w\in N_{\Delta-1}(u)\cap N_{\Delta-1}(v)$. 
	Then $d_G(w)\ge |V(C_u)|+|V(C_v)|\ge 6$
	by Theorem~\ref{Thm:vizing-fan}~\eqref{common}.  
	Thus $\Delta=d_G(w)+1\ge 7$.  Applying Corollary~\ref{Thm:adj_small_vertex2}, it follows that $V_{\Delta-1}$ is an independent set of $G$.
	
	Let $A\subseteq V_\Delta$ be the set of all vertices $a$ satisfying $N_{\Delta-1}(a)=N_{\Delta-1}(u)$, and let $B\subseteq V_\Delta$ be the set of all vertices $b$ satisfying $N_{\Delta-1}(b)\ne N_{\Delta-1}(u)$ and $N_{\Delta-1}(b)\cap N_{\Delta-1}(u)\ne \emptyset$. Clearly $u\in A$ and $v\in B$, so $A$ and $B$ are non-empty. Partition $B$ into non-empty subsets $B_1,B_2,\ldots,B_t$ such that for each $i\in[1,t]$, all vertices in $B_i$ have the same neighborhood in $V_{\Delta-1}$. By Theorem~\ref{Thm:vizing-fan}~\eqref{common}, each of $A,B_1,B_2,\ldots,B_t$ induces a union of disjoint cycles in $G_\Delta$. So $|A|\ge 3$ and $|B_i|\ge 3$ for each $i\in[1,t]$.
	
	Now we claim   $t\ge \Delta-2$. 
	Assume otherwise  $t\le \Delta-3$.  
	Since for each $i\in [1,t]$, $|N_{\Delta-1}(A)\setminus N_{\Delta-1}(B_i)|=1$ by Theorem~\ref{Thm:nonadj_Delta_vertex} and  $|N_{\Delta-1}(A)|=\Delta-2$, there exists $z\in N_{\Delta-1}(A)$  such that $z\not\in N_{\Delta-1}(A)\setminus N_{\Delta-1}(B_i)$ for each $i\in [1,t]$, or equivalently,   $z\in N_{\Delta-1}(A)\cap \left(\bigcap_{i=1}^t N_{\Delta-1}(B_i)\right)$. Let  $z'\in N_{\Delta-1}(A)\setminus N_{\Delta-1}(B_1)$. Then 
	\begin{eqnarray*}
		|A|+\sum\limits_{1\le i\le t}|B_i|= d_G(z)=d_G(z')\le |A|+ \sum\limits_{2\le i\le t}|B_i|, 
	\end{eqnarray*}
	achieving a contradiction. Hence $t\ge \Delta-2$. 
	
	We now achieve a contradiction to the assumption  $\Delta\ge 7$
	by counting the number of edges in $G$ between $N_{\Delta-1}(A)$ and $A\cup B$.
	Note that $|N_{\Delta-1}(A)|=\Delta-2$. 
	Since each vertex in $B$ has exactly $\Delta-3$ neighbors in $N_{\Delta-1}(A)$ and $|B_i| \ge 3$ for each $i\in [1,t]$, we have
	\[ |E_G (A\cup B,N_{\Delta-1}(A))| = 
	|A|(\Delta-2) + 
	|\cup_{i=1}^tB_i | (\Delta-3) \ge 3(\Delta-2)+3t(\Delta -3) \ge 3(\Delta -2)^2. 
	\]
	
	On the other hand, since $N_{\Delta-1}(A)$ is an independent set and every vertex in it has degree $\Delta -1$ in $G$, we  have 
	\[
	|E_G (A\cap B,N_{\Delta-1}(A))| = (\Delta-1) (\Delta-2). 
	\]
	Since $\Delta\ge 2$, solving $\Delta$ in $(\Delta-1)(\Delta -2) \ge 3(\Delta -2)^2$ gives  $ \Delta\le 2.5$, 
	achieving a  desired contradiction. 
\end{proof}

\section{Definitions and previous results}

In this section, we recall essential concepts from~\cite{HZI}
and list a number of results that we will use as lemmas in the proof of 
Theorems~\ref{Thm:vizing-fan} to~\ref{Thm:nonadj_Delta_vertex}.

 Let $G$ be a graph,  $e\in E(G)$, $\varphi\in \CC^k(G-e)$ for some $k\ge 0$, and let
 $\alpha,\beta \in [1,k]$. Each component of $G-e$
induced on edges colored by $\alpha$ or $\beta$ is either a 
path or an even cycle,  which is called an \emph{$(\alpha,\beta)$-chain} of $G-e$
with respect to $\varphi$. 
Interchanging  $\alpha$ and $\beta$
on an $(\alpha,\beta)$-chain $C$ of $G$ gives a new edge $k$-coloring, which is denoted by 
$\varphi/C$. 
This operation  is called a \emph{Kempe change}. 

For $x,y\in V(G)$, if $x$ and $y$
are contained in the same  $(\alpha,\beta)$-chain, we say $x$ 
and $y$ are \emph{$(\alpha,\beta)$-linked} with respect to $\varphi$.
Otherwise, they are \emph{$(\alpha,\beta)$-unlinked}. 
If an $(\alpha,\beta)$-chain  $P$ is a path with one endvertex as $x$, we also denote  it by $P_x(\alpha,\beta,\varphi)$ and just write $P_x(\alpha,\beta)$ if $\varphi$ is understood.  For a vertex $u$ and an edge $uv$ contained in $P_x(\alpha,\beta,\varphi)$, 
we write 
  {$\mathit {u\in P_x(\alpha,\beta, \varphi)}$} and  {$\mathit {uv\in P_x(\alpha,\beta, \varphi)}$}.  
If $u,v\in P_x(\alpha,\beta,\varphi)$ such that $u$ lies between $x$ and $v$ on $P$, 
then we say that $P_x(\alpha,\beta,\varphi)$ \emph{meets $u$ before $v$}.

Let   
$T$ be  an alternating sequence of vertices  and edges of  $G$. We denote by \emph{$V(T)$}  
the set of vertices  contained in $T$, and by  
\emph{$E(T)$}  the set of edges contained in $T$. We simply write $\pbar(T)$ for $\pbar(V(T))$. 
If $V(T)$ is $\varphi$-elementary,
then for a color  $\tau\in \pbar(T)$,  we denote by  $\mathit{\pbar^{-1}_T(\tau)}$ the  unique vertex  in $V(T)$ at which $\tau$ 
is missed.  A coloring $\varphi'\in \CC^k(G-e)$
is  \emph{$(T,\varphi)$-stable} if for every  $x\in V(T)$ and every $f\in E(T)$, it holds that $\pbar'(x)=\pbar(x)$ and  $\varphi'(f)=\varphi(f)$.  Clearly, $\varphi$ is 
$(T,\varphi)$-stable, and if $\varphi_1\in \CC^k(G-e)$ is $(T,\varphi)$-stable, and  $\varphi_2\in \CC^k(G-e)$ is $(T,\varphi_1)$-stable, then $\varphi_2$
is also $(T,\varphi)$-stable.  
%Let $a,b$ be two positive integers. 
%If $b\ge a$, we  abbreviate  a vertex 
%sequence $s_a,  \ldots, s_{b}$ as $s_a:s_b$. 
%If $b<a$, then $s_a:s_b$ denotes  an empty sequence. 

\subsection{Multifan}
Let $G$ be a graph,  $rs_1\in E(G)$ and $\varphi\in \CC^k(G-rs_1)$ for some $k\ge 0$.`
A \emph{multifan} centered at $r$ with respect to $rs_1$ and $\varphi$
is a sequence $$F_\varphi(r,s_1:s_p):=(r, rs_1, s_1, rs_2, s_2, \ldots, rs_p, s_p)$$ with $p\geq 1$ consisting of  distinct vertices and edges such that 
for every edge $rs_i$ with $i\in [2,p]$,  there is a vertex $s_j$ with $j\in [1,i-1]$ satisfying
	$\varphi(rs_i)\in \pbar(s_j)$. 
The following result can be found in \cite[Theorem~2.1]{StiebSTF-Book}.

\begin{LEM}
	\label{thm:vizing-fan1}
	Let $G$ be a class 2 graph and $F_\varphi(r,s_1:s_p)$  be a multifan with respect to   $rs_1$ and  $\varphi\in \CC^\Delta(G-rs_1)$. Then  the following statements  hold. 
	\begin{enumerate}[(a)]
		\item $V(F)$ is $\varphi$-elementary. \label{thm:vizing-fan1a}
		\item For any $\alpha\in \pbar(r)$ and any  $\beta\in \pbar(s_i)$ with $i\in [1,p]$,  $r$ 
		and $s_i$ are $(\alpha,\beta)$-linked with respect to $\varphi$. \label{thm:vizing-fan1b}
	\end{enumerate}
\end{LEM}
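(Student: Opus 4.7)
The plan is to prove (a) by strong induction on $p$ and then derive (b) as a short Kempe-chain corollary.

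For the base case $p=1$, $V(F)=\{r,s_1\}$; if some $\alpha\in \pbar(r)\cap \pbar(s_1)$ existed, extending $\varphi$ by setting $\varphi(rs_1)=\alpha$ would yield an edge $\Delta$-coloring of $G$, contradicting that $G$ is class 2. For $p\ge 2$, the inductive hypothesis applied to the sub-multifan $F_\varphi(r,s_1:s_{p-1})$ says that $\{r,s_1,\ldots,s_{p-1}\}$ is $\varphi$-elementary, so any failure of elementarity for $V(F)$ must involve a color shared between $s_p$ and some $s_i$ with $i\in [0,p-1]$, where $s_0:=r$. This splits into two cases.

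In Case A, some $\alpha\in \pbar(r)\cap \pbar(s_p)$, and I perform a fan rotation. By the multifan definition there is a map $\sigma\colon [2,p]\to [1,p-1]$ with $\sigma(i)<i$ and $\varphi(rs_i)\in \pbar(s_{\sigma(i)})$; iterating $\sigma$ starting at $p$ gives indices $p=i_0>i_1>\cdots>i_k=1$. Define $\varphi^*$ by $\varphi^*(rs_{i_0})=\alpha$, $\varphi^*(rs_{i_j})=\varphi(rs_{i_{j-1}})$ for $1\le j\le k$, and $\varphi^*=\varphi$ elsewhere. Using $\alpha\in \pbar(s_p)$ and $\varphi(rs_{i_{j-1}})\in \pbar(s_{i_j})$, one checks that $\varphi^*$ is an edge $\Delta$-coloring of $G$, contradicting that $G$ is class 2. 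In Case B, some $\beta\in \pbar(s_j)\cap \pbar(s_p)$ with $j\in [1,p-1]$. Pick any $\alpha\in \pbar(r)$; since Case A is ruled out, $\alpha\notin \pbar(s_i)$ for every $i\in [1,p]$, so $\alpha\ne\beta$. Each of $r,s_j,s_p$ is therefore an endpoint of some $(\alpha,\beta)$-chain, and since a chain has at most two endpoints, the chain through $r$ meets at most one of $\{s_j,s_p\}$. Choose $t\in\{j,p\}$ with $s_t$ not $(\alpha,\beta)$-linked to $r$, let $P$ be the $(\alpha,\beta)$-chain through $s_t$, and set $\varphi':=\varphi/P$. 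Then both $r$ and $s_t$ miss $\alpha$; the main technical step is to check that $F_\varphi(r,s_1:s_t)$ remains a multifan under $\varphi'$, reducing the situation to Case A with $s_p$ replaced by $s_t$.

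For (b), suppose for contradiction that $r$ and $s_i$ are $(\alpha,\beta)$-unlinked for some $\alpha\in \pbar(r)$ and $\beta\in \pbar(s_i)$. Let $Q$ be the $(\alpha,\beta)$-chain containing $s_i$ and set $\varphi':=\varphi/Q$. Then $\alpha\in \pbar'(r)\cap \pbar'(s_i)$, and the same bookkeeping as in Case B confirms that the multifan structure persists under $\varphi'$, contradicting part (a). The main obstacle throughout is precisely this bookkeeping: one must show that the Kempe swap does not corrupt the multifan, namely that the few fan edges that could possibly be recolored (only those currently colored $\beta$) interact with $P$ or $Q$ in such a way that a shorter valid multifan survives, so that Case A or part (a) can be invoked on the reduced instance.
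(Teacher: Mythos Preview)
Your approach is correct and is the standard proof of the multifan elementarity lemma. Note, however, that the paper does not actually prove this statement: it simply cites it as Theorem~2.1 in the Stiebitz--Scheide--Toft--Favrholdt book, so there is no ``paper's own proof'' to compare against. Your argument follows precisely the classical route used in that reference: Case~A is the Vizing fan rotation, Case~B reduces to Case~A via a Kempe change disjoint from $r$, and (b) follows from (a) by the same reduction.

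One small clarification on the bookkeeping you flag as the ``main obstacle'': since the $(\alpha,\beta)$-chain $P$ (or $Q$) avoids $r$, no fan edge $rs_i$ is ever recolored; the only thing that can change is the missing set at the \emph{other} endpoint of the chain. In Case~B, by part (a) applied inductively, the only vertices of $V(F)$ missing $\alpha$ or $\beta$ are $r$, $s_j$, and $s_p$, so that other endpoint is either outside $V(F)$ or is the remaining one of $\{s_j,s_p\}$. If you always prefer $t=j$ when $s_j$ is unlinked from $r$, then the sub-multifan $F_{\varphi}(r,s_1{:}s_t)$ survives verbatim, since its defining conditions involve only $\pbar(s_m)$ for $m<t$, none of which change. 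In part (b), elementarity from (a) guarantees the other endpoint of $Q$ lies outside $V(F)$ entirely, so the multifan persists with no case analysis. With this refinement your sketch becomes a complete proof.
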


	Let $F_\varphi(r,s_1:s_p)$  be a multifan.  We call $s_{\ell_1},s_{\ell_2}, \ldots, s_{\ell_h}$, a subsequence of $s_2, \ldots, s_p$, an  \emph{$\alpha$-inducing sequence} for some $\alpha\in[1,k]$ with respect to $\varphi$ and $F$ if 
	$
	\varphi(rs_{\ell_1})= \alpha\in \pbar(s_1)$ and  $\varphi(rs_{\ell_i})\in \pbar(s_{\ell_{i-1}})$ for each  $i\in [2,h].
	$ (By this definition, $(r, rs_1, s_1, rs_{\ell_1}, s_{\ell_1}, \ldots, rs_{\ell_h}, s_{\ell_h})$ is also a multifan with respect to $rs_1$ and $\varphi$.)
	A  color in $\pbar(s_{\ell_i})$ for any $i\in[1,h]$ is an \emph{$\alpha$-inducing color} and is \emph{induced by} $\alpha$.   For $\alpha_i\in \pbar(s_{\ell_i})$
	and $\alpha_j\in \pbar(s_{\ell_j})$ with $i<j$ and $i,j\in [1,h]$, we write {$\mathit \alpha_i \prec \alpha_j$}.  For convenience, $\alpha$ itself is  an $\alpha$-inducing color and is induced by $\alpha$, and $\alpha\prec  \beta$
	for any $\beta \in \pbar(s_{\ell_i})$ and any $i\in [1,h]$. An $\alpha$-inducing color $\beta$ is called a \emph{last $\alpha$-inducing color} if  there does not exist any $\alpha$-inducing color $\delta$ such that $\beta \prec \delta$.

By Lemma~\ref{thm:vizing-fan1} (a), each color in $\pbar(F)\setminus \pbar(r)$ is induced by a unique color in $\pbar(s_1)$. Also if $\alpha_1$ and $\alpha_2$ are two distinct colors in $\pbar(s_1)$, then an $\alpha_1$-inducing sequence is disjoint with an $\alpha_2$-inducing sequence. The following result is 
 a consequence of Lemma~\ref{thm:vizing-fan1} (a).
\begin{LEM}[ {\cite[Lemma 3.2]{HZI}}]
	\label{thm:vizing-fan2}
	Let $G$ be a class 2 graph and $F_\varphi(r,s_1:s_p)$  be a multifan with respect to  $rs_1$ and  $\varphi\in \CC^\Delta(G-rs_1)$. For any two colors $\delta, \lambda$ with $\delta\in \pbar(s_i)$ and $\lambda\in \pbar(s_j)$ for some distinct $i,j\in [1,p]$, the following statements  hold.
	\begin{enumerate}[(a)]
		\item If $\delta$ and $\lambda$ are induced by different colors from $\pbar(s_1)$, then $s_i$ and $s_j$ are $(\delta, \lambda)$-linked with respect to $\varphi$. 
		\label{thm:vizing-fan2-a}
		\item If $\delta$ and $\lambda$ are induced by the same color from $\pbar(s_1)$ such that $\delta\prec\lambda$ and $s_i$ and $s_j$ are $(\delta, \lambda)$-unlinked with respect to $\varphi$, 
		then $r\in P_{s_j}(\lambda, \delta, \varphi)$.  	\label{thm:vizing-fan2-b}
	\end{enumerate}
	
\end{LEM}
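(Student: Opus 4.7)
The plan is to prove both (a) and (b) by contradiction: in each case I perform a single Kempe change on a $(\delta,\lambda)$-chain at $s_j$ and then derive a contradiction with the elementariness assertion of Lemma~\ref{thm:vizing-fan1}(a), applied to a carefully chosen sub-multifan $F^*$ of $F$ that contains both $s_i$ and $s_j$ and has $s_j$ as its last vertex.

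For (a), let $\alpha_1,\alpha_2\in\pbar(s_1)$ be the distinct colors inducing $\delta$ and $\lambda$ respectively. I concatenate the $\alpha_1$-inducing sequence ending at $s_i$ with the $\alpha_2$-inducing sequence ending at $s_j$, obtaining
\[
F^* = (r,\, rs_1,\, s_1,\, rs_{k_1},\, s_{k_1},\, \ldots,\, rs_i,\, s_i,\, rs_{l_1},\, s_{l_1},\, \ldots,\, rs_j,\, s_j).
\]
The two inducing sequences share only $s_1$: from any equality $s_{k_t}=s_{l_{t'}}$ with $t,t'\ge 1$, the identity $\varphi(rs_{k_t})=\varphi(rs_{l_{t'}})$ and the elementariness of $V(F)$ propagate backward along both sequences and eventually force $\alpha_1=\alpha_2$, a contradiction. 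For (b), since $\delta\prec\lambda$ are both induced by the same $\alpha\in\pbar(s_1)$, the $\alpha$-inducing sequence passes through $s_i$ strictly before $s_j$; I truncate it at $s_j$ to form $F^*$. In either case the fan-induction conditions on $F^*$ hold edge-by-edge, so $F^*$ is a multifan and Lemma~\ref{thm:vizing-fan1}(a) gives that $V(F^*)$ is $\varphi$-elementary.

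Now suppose the claimed linkage fails. In (a), the chains $P_{s_i}(\delta,\lambda,\varphi)$ and $P_{s_j}(\delta,\lambda,\varphi)$ are distinct, and since $r$ has at most one $\delta$-edge and one $\lambda$-edge in $G-rs_1$ it lies in at most one of them; after interchanging the roles of $(s_i,\delta,\alpha_1)$ and $(s_j,\lambda,\alpha_2)$ if necessary, I may assume $r\notin P_{s_j}$. In (b), assume for contradiction $r\notin P_{s_j}(\lambda,\delta,\varphi)=P_{s_j}(\delta,\lambda,\varphi)$. In both cases set $\varphi'=\varphi/P_{s_j}(\delta,\lambda,\varphi)$. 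Because $r\notin P_{s_j}$, no fan-edge of $F^*$ is recolored. The other endpoint $y$ of $P_{s_j}$ must miss $\delta$ or $\lambda$; elementariness of $V(F)$ limits the candidates in $V(F)$ to $s_i$ (ruled out since $s_i\notin P_{s_j}$) and $s_j$ (excluded as $y\neq s_j$), so $y\notin V(F^*)$. Hence the only missing-color change inside $V(F^*)$ is $\pbar'(s_j)=(\pbar(s_j)\setminus\{\lambda\})\cup\{\delta\}$, valid because $\delta\in\pbar(s_i)$ together with elementariness forces $\delta\notin\pbar(s_j)$, so $s_j$ has a $\delta$-edge to swap. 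Each fan-induction condition of $F^*$ references only missing-color sets at vertices strictly earlier than $s_j$, so all such conditions persist under $\varphi'$; thus $F^*$ remains a multifan with respect to $\varphi'$. But $s_i\notin P_{s_j}$ gives $\delta\in\pbar'(s_i)$, yielding $\delta\in\pbar'(s_i)\cap\pbar'(s_j)$, contradicting Lemma~\ref{thm:vizing-fan1}(a) for $F^*$ under $\varphi'$.

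The main obstacle is verifying that $F^*$ persists as a multifan after the Kempe change; placing $s_j$ last in $F^*$ is exactly what makes this automatic, since no fan-induction condition then depends on $\pbar(s_j)$. The two supporting technical points are the vertex-disjointness (except at $s_1$) of the two inducing sequences in (a), and the localization of the missing-color change to $s_j$ within $V(F^*)$, both of which rest on the elementariness of $V(F)$ under $\varphi$.
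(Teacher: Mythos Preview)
The paper does not give its own proof of this lemma; it quotes it from \cite{HZI} and only remarks that it ``is a consequence of Lemma~\ref{thm:vizing-fan1}(a).'' Your argument is precisely the standard derivation from Lemma~\ref{thm:vizing-fan1}(a): build a sub-multifan $F^*$ containing both $s_i$ and $s_j$ with $s_j$ placed last, perform the Kempe $(\delta,\lambda)$-change on the chain through $s_j$ (once $r$ is arranged to avoid it), and contradict elementariness. So your approach is correct and is the intended one.

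There is one small gap in your handling of~(a). Your symmetry reduction ``after interchanging the roles of $(s_i,\delta,\alpha_1)$ and $(s_j,\lambda,\alpha_2)$ if necessary, I may assume $r\notin P_{s_j}$'' can force the \emph{new} $s_j$ to be $s_1$: this happens exactly when originally $i=1$ and $r\in P_{s_j}$. In that situation your construction of $F^*$ places $s_j=s_1$ first, not last, so your key sentence ``each fan-induction condition of $F^*$ references only missing-color sets at vertices strictly earlier than $s_j$'' is false as stated. The repair is immediate: the swap on $P_{s_1}$ removes only the color $\alpha_1$ from $\pbar(s_1)$, while the sole fan-induction condition in $F^*$ that references $\pbar(s_1)$ uses the color $\alpha_2\neq\alpha_1$; hence $F^*$ is still a multifan under $\varphi'$ and the contradiction $\lambda\in\pbar'(s_1)\cap\pbar'(s_j)$ goes through. (In part~(b) the issue cannot arise, since $\delta\prec\lambda$ forces $s_j\neq s_1$.) With this one-line patch your proof is complete.
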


By Lemma~\ref{biregular} (a), every edge of an HZ graph is critical. 
For an HZ-graph $G$ with maximum degree $\Delta\ge 3$, we let $rs_1\in E(G)$ with $r\in V_\Delta$ and $s_1\in N_{\Delta-1}(r):=\{s_1,s_2,\dots, s_{\Delta-2}\}$, and $\varphi\in \CC^\Delta(G-rs_1)$. Then we call $(G,rs_1,\varphi)$ a \emph{coloring-triple}. 
As $\Delta$-degree vertices in a multifan do not miss any color, 
for  multifans in  HZ-graphs, we add a further requirement in its definition as follows 
and we use this new definition  in the remainder of this paper.
\begin{Ass}
	For multifans in an HZ-graph,	all of its vertices except the center  have degree $\Delta-1$.   
\end{Ass}

Let $(G,rs_1,\varphi)$ be a coloring-triple and $F:=F_\varphi(r,s_1:s_p)$ be a multifan. By its definition,  $|\pbar(s_1)|=2$, $|\pbar(s_i)|=1$ for each $i\in [2,p]$, and  so every color in $\pbar(F)\setminus \pbar(r)$ is induced by one of the two colors in $\pbar(s_1)$. 
We call $F$ a \emph{typical multifan}, denoted $F_\varphi(r, s_1:s_\alpha:s_\beta):=(r, rs_1, s_1, rs_2, s_2, \ldots,rs_\alpha, s_\alpha, rs_{\alpha+1}, s_{\alpha+1}, \ldots, rs_\beta, s_\beta)$, where $\beta:=p$,  if $\pbar(r)=1$ (recall we denote $\pbar(v)$ by a number if $|\pbar(v)|=1$), $\pbar(s_1)=\{2,\Delta\}$, and if $|V(F)|\ge 3$, then $\varphi(rs_{\alpha+1})=\Delta$ and $\pbar(s_{\alpha+1})=\alpha+2$ (if $\beta>\alpha$), and 
for each $i\in [2,\beta]$ with $i\ne \alpha+1$, $\varphi(rs_i)=i$  and $\pbar(s_i)=i+1$.	
It is clear that $s_2, \ldots, s_\alpha$ is the longest 
$2$-inducing sequence  and $ s_{\alpha+1}, \ldots, s_\beta$ is the longest 
$\Delta$-inducing sequence of $F_\varphi(r, s_1:s_\alpha:s_\beta)$. 
By relabelling   vertices and colors if necessary, any multifan in an HZ-graph can be assumed to be a typical multifan, see Figure~\ref{f1} (a) for a depiction.  If $\alpha=\beta$, then we write $F_\varphi(r,s_1:s_\alpha)$ for $F_\varphi(r, s_1:s_\alpha:s_\beta)$, and call it  a {\it typical 2-inducing multifan}.  
%shows a typical multifan within the neighborhood of a $\Delta$-vertex $r$ in an HZ-graph. 
%The following Lemma indicates  that  in an HZ-graph, any multifan can be assumed to be a typical multifan with only  one sequence. 

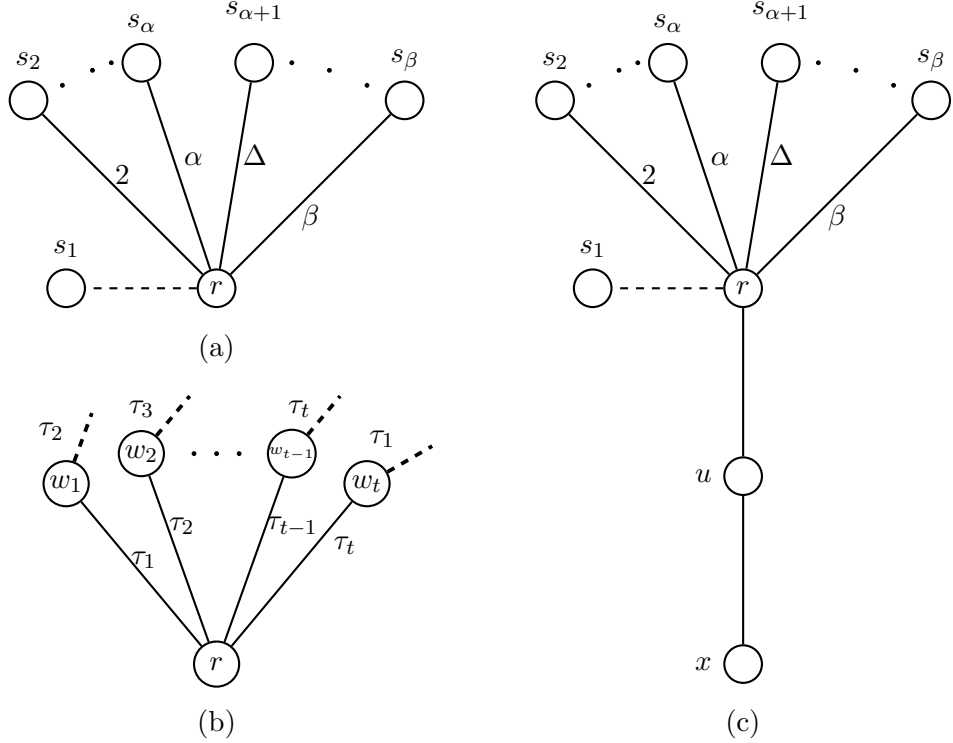
\begin{figure}[!htb]
	\begin{center}
		\begin{tikzpicture}[scale=1]
		
		{\tikzstyle{every node}=[draw ,circle,fill=white, minimum size=0.5cm,
			inner sep=0pt]
			\draw[black,thick](-1,0) node[label={below: }] (r)  {$r$};
			\draw[black,thick](-3,0) node[label={above: $s_1$}] (s1) {};
			\draw[black,thick](-3.5,2.5) node[label={above: $s_2$}] (s2) {};
			\draw[black,thick](-2,3) node[label={above: $s_\alpha$}] (sa) {};
			\draw[black,thick](-0.5,3) node[label={above: $s_{\alpha+1}$}] (sa1) {};
			\draw[black,thick](1.5,2.5) node[label={above: $s_\beta$}] (sb) {};
			
		}
		\path[draw,thick,black, dashed]
		(r) edge node[name=la,above,pos=0.5] {\color{black}} (s1);
		
		\path[draw,thick,black]
		(r) edge node[name=la,above,pos=0.5] {\color{black}$2$} (s2)
		(r) edge node[name=la,above,pos=0.5] {\color{black}\quad$\alpha$} (sa)
		(r) edge node[name=la,above,pos=0.5] {\color{black}\quad\,\,$\Delta$} (sa1)
		(r) edge node[name=la,below,pos=0.5] {\color{black}$\beta$} (sb);

		{\tikzstyle{every node}=[draw ,circle,fill=black, minimum size=0.05cm,
			inner sep=0pt]
			\draw(-3.05,2.7) node (f1)  {};
			\draw(-2.65,2.9) node (f1)  {};
			\draw(-2.4,3) node (f1)  {};
			\draw(0,3) node (f1)  {};
			\draw(0.5,2.9) node (f1)  {};
			\draw(1,2.7) node (f1)  {};
		} 
		\draw(-1,-0.8) node (f1)  {(a)};
		\begin{scope}[shift={(7,0)}]
		{\tikzstyle{every node}=[draw ,circle,fill=white, minimum size=0.5cm,
			inner sep=0pt]
			\draw[black,thick](-1,0) node[label={left: }] (r)  {$r$};
			\draw[black,thick](-3,0) node[label={above: $s_1$}] (s1) {};
			\draw[black,thick](-3.5,2.5) node[label={above: $s_2$}] (s2) {};
			\draw[black,thick](-2,3) node[label={above: $s_\alpha$}] (sa) {};
			\draw[black,thick](-0.5,3) node[label={above: $s_{\alpha+1}$}] (sa1) {};
			\draw[black,thick](1.5,2.5) node[label={above: $s_\beta$}] (sb) {};
			\draw[black,thick](-1,-2.5) node[label={left: $u$}] (u)  {};
			\draw[black,thick](-1,-5) node[label={left: $x$}] (x)  {};
		}
		\path[draw,thick,black, dashed]
		(r) edge node[name=la,above,pos=0.5] {\color{black}} (s1);
		
		\path[draw,thick,black]
		(r) edge node[name=la,above,pos=0.5] {\color{black}$2$} (s2)
		(r) edge node[name=la,above,pos=0.5] {\color{black}\quad$\alpha$} (sa)
		(r) edge node[name=la,above,pos=0.5] {\color{black}\quad\,\,$\Delta$} (sa1)
		(r) edge node[name=la,below,pos=0.5] {\color{black}$\beta$} (sb)
		(r) edge node[name=la,below,pos=0.5] {\color{black}} (u)
		(u) edge node[name=la,below,pos=0.5] {\color{black}} (x);
		
		{\tikzstyle{every node}=[draw ,circle,fill=black, minimum size=0.05cm,
			inner sep=0pt]
			\draw(-3.05,2.7) node (f1)  {};
			\draw(-2.65,2.9) node (f1)  {};
			\draw(-2.4,3) node (f1)  {};
			\draw(0,3) node (f1)  {};
			\draw(0.5,2.9) node (f1)  {};
			\draw(1,2.7) node (f1)  {};
		} 
		
		\draw(-1,-5.8) node (f1)  {(c)};
		
		\end{scope}	
		
		\begin{scope}[shift={(-1,-2)}]
		{\tikzstyle{every node}=[draw ,circle,fill=white, minimum size=0.6cm,
			inner sep=0pt]
			\draw[black,thick] (0, -3) node (r)  {$r$};
			\draw[black,thick] (-2, 0.4-1) node (sa)  {$w_1$};
			\draw [black,thick](-1, 0.8-1) node (sa2)  {$w_2$};
			\draw [black,thick](1, 0.8-1) node (sb)  {\tiny$w_{t-1}$};
			\draw [black,thick](2, 0.4-1) node (sb2)  {$w_t$};
		}
		\path[draw,thick,black]
		(r) edge node[name=la,pos=0.6] {\color{black}\quad$\tau_1$} (sa)
		(r) edge node[name=la,pos=0.7] {\color{black}\quad$\tau_2$} (sa2)
		(r) edge node[name=la,pos=0.7] {\color{black}\quad\,\,\,\,\,$\tau_{t-1}$} (sb)
		(r) edge node[name=la,pos=0.7] {\color{black}\qquad$\tau_t$} (sb2);

		\draw[dashed, black, line width=0.5mm] (sa)--++(70:1cm); 
		\draw[dashed, black, line width=0.5mm] (sa2)--++(50:1cm); 
		\draw[dashed, black, line width=0.5mm] (sb)--++(50:1cm); 
		\draw[dashed, black, line width=0.5mm] (sb2)--++(30:1cm);

		\draw[black] (-2.2, 1.1-1) node {$\tau_2$};  
		\draw[black] (-1.0, 1.4-1) node {$\tau_3$};  
		\draw[black] (1.1, 1.4-1) node {$\tau_{t}$}; 
		\draw[black] (2.2, 1.0-1) node {$\tau_{1}$}; 
		%\node at (0, -4.3) {(b)};
		
		{\tikzstyle{every node}=[draw ,circle,fill=black, minimum size=0.05cm,
			inner sep=0pt]
			
			\draw(-0.3,0.8-1) node (f1)  {};
			\draw(0,0.8-1) node (f1)  {};
			\draw(0.3,0.8-1) node (f1)  {};

			%				\draw(3.55,-0.8) node (f1)  {};
			%				\draw(3.7,-1) node (f1)  {};
			%				\draw(3.85,-1.2) node (f1)  {};
		} 
		
		\draw(0,-3.8) node (f1)  {(b)};
		\end{scope}
		\end{tikzpicture}
	
	\vspace{-0.5cm}
	\end{center}
	\caption{(a) A typical multifan $F_\varphi(r, s_1:s_\alpha:s_\beta)$, where $\pbar(r)=1$ and $\pbar(s_1)=\{2,\Delta\}$;  (b) A rotation centered at $r$, where a dashed line at a vertex indicates a color missing at the vertex; (c)  A lollipop centered at $r$, where $x$ can be the same as some $s_\ell$ for $\ell\in [\beta+1, \Delta-2]$.}
	\label{f1}
\end{figure}

\subsection{Kierstead path}

Let $G$ be a graph, $e=v_0v_1\in E(G)$, and  $\varphi\in \CC^k(G-e)$ for some integer $k\ge 0$.
A \emph{Kierstead path}  with respect to $e$ and $\varphi$
is a sequence $K=(v_0, v_0v_1, v_1, v_1v_2, v_2, \ldots, v_{p-1}, v_{p-1}v_p,  v_p)$ with $p\geq 1$ consisting of  distinct vertices and  edges such that for every edge $v_{i}v_{i+1}$ with $i\in [1,p-1]$,  there exists $j\in [0,i-1]$ satisfying 
	$\varphi(v_{i}v_{i+1})\in \pbar(v_j)$.

A Kierstead path with at most 3 vertices is a multifan. We consider Kierstead paths with $4$ vertices.
 Statement $(a)$ below was proved in Theorem 3.3 from~\cite{StiebSTF-Book} and statement $(b)$ is a consequence of $(a)$. 

\begin{LEM}[]\label{Lemma:kierstead path1}
	Let $G$ be a class 2 graph,
	$v_0v_1\in E(G)$, and $\varphi\in \CC^\Delta(G-v_0v_1)$. If $K=(v_0, v_0v_1, v_1, v_1v_2,  v_2, v_2v_3, v_3)$ is a Kierstead path with respect to $v_0v_1$
	and $\varphi$, then the following statements hold.
	\begin{enumerate}[(a)]
		\item If $\min\{d_G(v_1), d_G(v_2)\}<\Delta$, then $V(K)$ is $\varphi$-elementary.
		\item For any two colors $\alpha,\delta$ with $\alpha\in\pbar(v_0)$ and 
		$\delta\in \pbar(v_3)$, if  $\min\{d_G(v_1), d_G(v_2)\}<\Delta$ and $\alpha \not\in \{\varphi(v_1v_2), \varphi(v_2v_3)\}$, 
		then 
		$v_3$ and $v_0$ are $(\alpha,\delta)$-linked with respect to $\varphi$.
	\end{enumerate}

\end{LEM}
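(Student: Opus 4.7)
The plan is to prove (a) directly by extending the multifan argument to an additional vertex, and then derive (b) from (a) by a single Kempe swap.

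For (a), my first move would be to observe that $(v_1, v_0v_1, v_0, v_1v_2, v_2)$ is a multifan centered at $v_1$ with respect to $v_0v_1$ and $\varphi$, because $\varphi(v_1v_2)\in\pbar(v_0)$ by the Kierstead-path condition. Lemma~\ref{thm:vizing-fan1}(a) then already yields that $\{v_0,v_1,v_2\}$ is $\varphi$-elementary, so what remains is to show $\pbar(v_3)\cap\bigl(\pbar(v_0)\cup\pbar(v_1)\cup\pbar(v_2)\bigr)=\emptyset$. Supposing for contradiction that some $\delta\in\pbar(v_3)$ also lies in one of $\pbar(v_0),\pbar(v_1),\pbar(v_2)$, I would split into cases based on (i) whether $\varphi(v_2v_3)\in\pbar(v_0)$ or $\varphi(v_2v_3)\in\pbar(v_1)$ and (ii) which $\pbar(v_i)$ contains $\delta$. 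In each case, a Kempe swap on a two-color chain through $v_3$---using $\delta$ together with one of $\varphi(v_1v_2)$, $\varphi(v_2v_3)$, or a color freed up by the small-degree vertex---can be combined with a local recoloring along $K$ to properly color $v_0v_1$ in $G$, contradicting that $G$ is class~$2$. The hypothesis $\min\{d_G(v_1),d_G(v_2)\}<\Delta$ is essential precisely because it supplies a free color at $v_1$ or $v_2$ that lets the shift close up.

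For (b), by (a) we know $\pbar(v_3)$ is disjoint from $\pbar(v_0)$, so $\alpha\ne\delta$ and the $(\alpha,\delta)$-chain $P:=P_{v_3}(\alpha,\delta,\varphi)$ is a path with endpoint $v_3$. Assume for contradiction that $v_0\notin V(P)$ and let $\varphi':=\varphi/P$. Three checks then suffice. First, $K$ is still a Kierstead path under $\varphi'$: by (a) and the hypothesis on $\alpha$, the edge colors $\varphi(v_1v_2)$ and $\varphi(v_2v_3)$ lie outside $\{\alpha,\delta\}$ (the value $\alpha$ is ruled out by hypothesis, and $\delta$ is ruled out since $\varphi(v_1v_2)\in\pbar(v_0)$ together with (a) gives $\varphi(v_1v_2)\notin\pbar(v_3)$, while $\varphi(v_2v_3)\notin\pbar(v_3)$ is automatic), so neither edge lies on $P$ and their colors are unchanged. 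Second, $\pbar'(v_0)=\pbar(v_0)$ because $v_0\notin V(P)$, and $\pbar'(v_i)=\pbar(v_i)$ for $i=1,2$ because (a) forces $\alpha,\delta\notin\pbar(v_i)$, so $v_i$ can only appear on $P$ as an interior vertex. Third, $\alpha\notin\pbar(v_3)$ (otherwise $\alpha\in\pbar(v_0)\cap\pbar(v_3)$ would violate (a)), so $v_3$ has an $\alpha$-edge that becomes a $\delta$-edge after the swap, giving $\alpha\in\pbar'(v_3)$. Hence $\alpha\in\pbar'(v_0)\cap\pbar'(v_3)$, contradicting (a) applied to $\varphi'$.

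The main obstacle is the case analysis in (a): for each configuration of $\delta$ and $\varphi(v_2v_3)$ one has to identify exactly the right Kempe chain to shift and then fit the shifted coloring back into an extension of $\varphi$ to all of $G$. Part (b), by comparison, is routine bookkeeping on which edge colors and missing colors are disturbed by the single $(\alpha,\delta)$-swap, made clean by the disjointness guaranteed by (a).
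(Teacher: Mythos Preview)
Your proposal is correct and aligns with the paper's treatment. The paper does not prove (a) at all---it simply cites Theorem~3.3 of~\cite{StiebSTF-Book}---and for (b) it only remarks that it ``is a consequence of (a)'' without giving the argument; your $(\alpha,\delta)$-swap argument is exactly the routine verification the paper leaves implicit, and the checks you list (that $\varphi(v_1v_2),\varphi(v_2v_3)\notin\{\alpha,\delta\}$, that $\pbar'(v_i)=\pbar(v_i)$ for $i=0,1,2$, and that $\alpha\in\pbar'(v_3)$) are precisely what is needed to reapply (a) under $\varphi'$.
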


\subsection{Pseudo-multifan}
Let $G$ be a graph,  $rs_1\in E(G)$ and $\varphi\in \CC^k(G-rs_1)$ for some $k\ge 0$.
A multifan $F_\varphi(r,s_1:s_p)$ is  \emph{maximum} at $r$ if $|V(F)|$ is maximum among all multifans with respect to $rs$ for all $s\in N_G(r)$  and all $\varphi'\in \CC^k(G-rs)$. 
A  \emph{pseudo-multifan}  with respect to $rs_1$ and $\varphi$ is an alternating sequence
  $S:=S_\varphi(r,s_1:s_t:s_p):=(r, rs_1, s_1, rs_2, s_2, \ldots,rs_t, s_t, rs_{t+1},  s_{t+1}, \ldots, s_{p-1},  rs_p, s_p)$ 
with $t,p \ge 1$  of distinct vertices and edges  satisfying the following conditions:
\begin{enumerate}[(P1)]
	%\item $e_1=e$ and    $e_i=rs_i$ for each $i\in [2,p]$.   \label{P1}
	\item the subsequence $F:=(r, rs_1, s_1, rs_2, s_2, \ldots,rs_t, s_t)$ is a maximum multifan at $r$.
	\item $V(S)$ is $\varphi'$-elementary  for every $(F,\varphi)$-stable $\varphi'\in \CC^k(G-rs_1)$.
\end{enumerate}

Every maximum multifan is a pseudo-multifan, and if $S$ is a pseudo-multifan 
with respect to  $\varphi$ and a multifan  $F$,  then by the definition above,  $S$ is a pseudo-multifan under every  $(F,\varphi)$-stable coloring $\varphi'$. We call a pseudo-multifan  $S$ {\it typical} (resp. {\it typical 2-inducing}) if the maximum multifan that is contained in $S$
is typical (resp. typical 2-inducing).

Let $(G,rs_1,\varphi)$ be a coloring-triple and $i,j\in [2,\Delta-2]$.  
The \emph{shift from $s_i$ to $s_j$}  is an operation that,  for each $\ell $ with $ \ell\in[i,j]$,  recolor $rs_\ell$ by  the color in $\pbar(s_\ell)$.  We will apply a shift either on a sequence of vertices from a multifan 
or on a rotation.

%\begin{LEM}\label{2-inducing}
%	Let $(G,rs_1,\varphi)$ be a coloring-triple. 
%	Then  for every  multifan $F=F_\varphi(r,s_1: s_p)$ of $G$,  there exists a coloring $\varphi'\in \CC^\Delta(G-rs_p)$ and a typical multifan $F^*$ centered at $r$ with respect to $rs_p$ and $\varphi'$ such that $V(F^*)=V(F)$ and $F^*$ has one sequence. 
%	Furthermore, $\varphi'$ can be obtained from $\varphi$ by a shift, coloring $rs_1$,
%	and uncoloring $rs_p$.  
%\end{LEM}
%\pf 
%
%\qed

\begin{LEM}\label{2-inducing}
	Let $(G,rs_1,\varphi)$ be a coloring-triple. Then for every typical pseudo-multifan $S:=S_\varphi(r, s_1: s_t: s_{p})$, there exists a coloring $\varphi'\in \CC^\Delta(G-rs_t)$ and a pseudo-multifan $S^*$ centered at $r$ with respect to $rs_t$ and $\varphi'$ such that $V(S^*)=V(S)$ and $S^*$ is typical 2-inducing. 
\end{LEM}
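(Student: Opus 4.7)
The strategy is to rotate the colors along one of the two inducing chains of the maximum multifan $F := F_\varphi(r, s_1:s_\alpha:s_t)$ inside $S$, thereby moving the uncolored edge from $rs_1$ to $rs_t$ while preserving $V(S^*) = V(S)$. In the generic case $\alpha < t$, I would define $\varphi' \in \CC^\Delta(G - rs_t)$ by rotating along the $\Delta$-chain: set $\varphi'(rs_1) := \Delta$, $\varphi'(rs_{\alpha+1}) := \alpha+2$, and $\varphi'(rs_i) := i+1$ for each $i \in [\alpha+2, t-1]$, leaving $rs_t$ uncolored and every other edge fixed. (The case $\alpha = t$ uses the analogous 2-chain rotation starting from $\varphi'(rs_1) := 2$; the case $t = 1$ is trivial with $\varphi' := \varphi$.) A short check shows $\varphi'$ is a proper edge coloring: each new color on $rs_\ell$ lies in $\pbar(s_\ell)$, and at $r$ the rotation is a cyclic permutation of the colors on the affected edges, so $\pbar'(r) = \{1\}$ is preserved. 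The key resulting fact is $\pbar'(s_t) = \{t, t+1\}$, with the remaining missing colors on $V(F)$ redistributed by the rotation.

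I would then propose
\[
  S^* := (r, rs_t, s_t, rs_{t-1}, s_{t-1}, \ldots, rs_{\alpha+1}, s_{\alpha+1}, rs_1, s_1, rs_2, s_2, \ldots, rs_\alpha, s_\alpha, rs_{t+1}, s_{t+1}, \ldots, rs_p, s_p),
\]
and check that its initial segment on $V(F)$ forms a multifan $F^*$ at $r$ with respect to $rs_t$ and $\varphi'$. For maximality, I would invoke Lemma~\ref{biregular}: since $r \in V_\Delta$ has exactly two neighbors in $V_\Delta$, maximality of $F$ forces the two colors in $\pbar(F) \setminus \pbar(r)$ not carried by $E(F)$ (namely $\alpha+1$ and $t+1$ when $\alpha < t$, or $t+1$ and $\Delta$ when $\alpha = t$) to lie on those two $V_\Delta$-edges; the rotation does not touch any edge $rs_j$ with $j > t$, so those blocks persist under $\varphi'$ and $V(F^*) = V(F)$. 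Relabeling colors via $t \mapsto 2$, $t+1 \mapsto \Delta$ (and extending compatibly along the chain) makes $\pbar'(s_t) = \{2, \Delta\}$, places every other vertex of $V(F)$ on the new 2-chain, and leaves the new $\Delta$-chain empty; thus $F^*$ is typical 2-inducing, which gives property (P1) of $S^*$.

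For property (P2), given any $(F^*, \varphi')$-stable coloring $\varphi''$, I would reverse-rotate $\varphi''$ along the same chain to obtain $\varphi''' \in \CC^\Delta(G - rs_1)$. An edge-by-edge check shows $\varphi'''$ is $(F, \varphi)$-stable, since the reverse rotation exactly undoes the forward one on $E(F) \cap E(r)$ and restores each original missing-color assignment on $V(F)$. Property (P2) of $S$ then yields elementarity of $V(S)$ under $\varphi'''$; because the rotation never touches any tail edge $rs_j$ ($j > t$), we have $\pbar''(s_j) = \pbar'''(s_j)$ throughout the tail, while the missing-color multiset on $V(F)$ is merely redistributed and equal to $\pbar'''(V(F))$ as a set. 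Pairwise disjointness therefore transfers to $\varphi''$, giving elementarity of $V(S^*)$ and confirming (P2). The main obstacle is this last step: carefully bookkeeping how the chain rotation permutes missing colors among $V(F)$ and verifying that $(F^*, \varphi')$-stability corresponds, under the reverse rotation, to $(F, \varphi)$-stability so that (P2) of $S$ can be invoked.
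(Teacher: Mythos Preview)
Your proposal is correct and follows essentially the same route as the paper: rotate colors along the $\Delta$-inducing chain (what the paper calls ``shift from $s_{\alpha+1}$ to $s_{\beta-1}$ and color $rs_1$ by $\Delta$''), then verify (P2) by reverse-rotating an arbitrary $(F^*,\varphi')$-stable coloring back to an $(F,\varphi)$-stable one. One small simplification: your maximality argument via Lemma~\ref{biregular} and the $V_\Delta$-edge blocks only shows that $F^*$ is \emph{non-extendable} under $\varphi'$, whereas (P1) requires $F^*$ to be \emph{maximum at $r$} over all edges and colorings; the paper handles this in one line by noting $|V(F^*)|=|V(F)|$ and that $F$ was already maximum, which you should use instead.
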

\pf Let  $F=F_\varphi(r,s_1:s_\alpha:s_\beta)$ be the typical multifan contained in $S$, where $s_\beta=s_t$. 
%If $\alpha=1$, then $F$ is a $\Delta$-inducing multifan. We exchange the role of the color 2 and 
%$\Delta$, we see that $F$ is a typical 2-inducing multifan.
If $\beta=\alpha$, then we are done. Thus we assume   $\beta\ge \alpha+1 \ge 3$. Let $\varphi'$ be obtained from $\varphi$ by uncoloring $rs_\beta$, shift from $s_{\alpha+1}$ to $s_{\beta-1}$ and coloring $rs_1$ by $\Delta$. Now $\pbar'(s_\beta)=\{\beta,\beta+1\}$, $F^*=(r, rs_\beta, s_\beta, rs_{\beta-1}, s_{\beta-1}, \ldots,  rs_{\alpha+1}, s_{\alpha+1}, rs_1, s_1,  \ldots, rs_\alpha, s_\alpha)$ is a $\beta$-inducing multifan with respect to $rs_\beta$ and $\varphi'$.

We next show that $S^*=(F^*,rs_{t+1}, s_{t+1}, \ldots, rs_{p},s_{p})$
is a pseudo-multifan with respect to $rs_\beta$ and $\varphi'$. 
 Since $|V(F^*)|=|V(F)|$, $F^*$ is also a maximum multifan at $r$. Thus  it suffices to show that for any $(F^*,\varphi')$-stable $\varphi''\in \CC^\Delta(G-rs_\beta)$, $V(S^*)$ is $\varphi''$-elementary. 
Suppose to  the contrary that there exists $(F^*,\varphi')$-stable $\varphi''\in \CC^\Delta(G-rs_\beta)$ but $V(S^*)$
is not $\varphi''$-elementary.  As $\varphi''$ is $(F^*,\varphi')$-stable,
we can undo the operations we did before. Specifically, let $\varphi'''$ be the coloring obtained from $\varphi''$ by uncoloring $rs_1$, shift from $s_{\alpha+1}$ to $s_{\beta-1}$ and coloring $rs_{\beta}$ by $\beta$. Then $\varphi'''$ is $(F,\varphi)$-stable and $\pbar'''(S^*)=\pbar''(S^*)$.
Thus, $V(S^*)$ is  not $\varphi''$-elementary implies that   $V(S^*)$ is not $\varphi'''$-elementary.  Since $V(S^*)=V(S)$, this contradicts the assumption that $V(S)$
is elementary under any  $(F,\varphi)$-stable coloring. Therefore, $S^*$ is a pseudo-multifan with respect to $rs_\beta$ and $\varphi'$.
By renaming  colors
and vertices, we can assume that $F^*$ is typical 2-inducing and so $S^*$ is typical 2-inducing. 
\qed

Let $(G,rs_1,\varphi)$ be a coloring-triple. 
A sequence of distinct vertices 
$w_1,  \ldots, w_t \in N_{\Delta-1}(r)$  form a \emph{rotation}  if $\{w_1,\ldots, w_t\}$ is $\varphi$-elementary, and 
for each $\ell$ with $\ell\in [1,t]$, it holds that  $\varphi(rw_\ell)=\pbar(w_{\ell-1})$, where $w_0:=w_t$. 
An example of a rotation is given in Figure~\ref{f1} (b). 

\begin{LEM}[ {\cite[Theorem 2.5]{HZI}}]\label{pseudo-fan-ele}
	Let  $(G,rs_1,\varphi)$ be a coloring-triple, $S:=S_\varphi(r, s_1: s_t: s_{\Delta-2})$  be a pseudo-multifan 
	with $F:=F_\varphi(r,s_1:s_t)$ being  the maximum multifan contained in it. Let	$j\in [t+1,\Delta-2]$ and $\delta \in \pbar(s_j)$.   Then 
	\begin{enumerate}[(a)]
		\item $\{s_{t+1}, \ldots, s_{\Delta-2}\}$ can be partitioned 
		into rotations with respect to $\varphi$. \label{pseudo-a}

		\item $s_j$ and $r$ are $(1,\delta)$-linked with respect to  $\varphi$ . \label{pseudo-a1}
		\item For every color $\gamma\in \pbar(F)$ with $\gamma \ne 1$,  it holds that $r\in P_{y}(\gamma,\delta)=P_{s_j}(\gamma,\delta)$, where $y=\mathit{\pbar_{F}^{-1}(\gamma)}$.  
		Furthermore, for $z\in N_G(r)$ such that $\varphi(rz)=\gamma$,  
		$P_{y}(\gamma,\delta)$	meets $z$ before  $r$.  \label{pseudo-b}
		\item For every $\delta^*\in \pbar(S)\setminus \pbar(F)$ with $\delta^*\ne \delta$, it holds that $P_{y}(\delta,\delta^*)=P_{s_{j}}(\delta,\delta^*)$,  where $y=\mathit{\pbar_{S}^{-1}(\delta^*)}$. Furthermore, either $r\in P_{s_j}(\delta,\delta^*)$ or $P_r(\delta, \delta^*)$ is an even cycle. 
		\label{pseudo-c}
	\end{enumerate}
\end{LEM}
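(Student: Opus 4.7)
The plan is to prove the four parts in sequence, leveraging the defining property of a pseudo-multifan: that $V(S)$ is $\varphi'$-elementary for every $(F,\varphi)$-stable $\varphi'\in\CC^\Delta(G-rs_1)$. A preliminary counting observation is that $|\pbar(r)|+|\pbar(s_1)|+\sum_{i=2}^{\Delta-2}|\pbar(s_i)|=1+2+(\Delta-3)=\Delta$, so elementarity of $V(S)$ forces $\pbar(V(S))=[1,\Delta]$; that is, every color in $[1,\Delta]$ is missed at exactly one vertex of $V(S)$.

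For part (a), maximality of $F$ implies $\varphi(rs_i)\notin\pbar(V(F))$ for each $i\in[t+1,\Delta-2]$, since otherwise $F$ could be extended by appending $s_i$. Since also $\varphi(rs_i)\ne 1$, the counting observation forces $\varphi(rs_i)=\pbar(s_j)$ for a unique $j\in[t+1,\Delta-2]$; the map $g:[t+1,\Delta-2]\to[t+1,\Delta-2]$ defined by $g(i)=j$ is then a permutation, whose cycles are precisely the promised rotations among $\{s_{t+1},\ldots,s_{\Delta-2}\}$. Parts (b) and (d) then follow a common Kempe template: assume unlinkedness, swap the chain at one endpoint, and contradict $(F,\varphi)$-stable elementarity. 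For (b), if $r$ and $s_j$ are $(1,\delta)$-unlinked, set $\varphi'=\varphi/P_{s_j}(1,\delta)$; no edge at $r$ is colored $1$ (as $1\in\pbar(r)$) and no edge of $E(F)$ is colored $\delta$ (since $\delta\in\pbar(S)\setminus\pbar(F)$ while $\varphi(E(F))\subseteq\pbar(F)$ by the multifan definition), so the chain avoids $E(F)$, each internal passage through some $s_i\in V(F)$ swaps its incident $1$-edge and $\delta$-edge symmetrically, and $\varphi'$ is $(F,\varphi)$-stable with $\pbar'(s_j)=1=\pbar'(r)$, a contradiction. The linkedness assertion in (d) is entirely analogous, since both $\delta,\delta^*\in\pbar(S)\setminus\pbar(F)$, so neither appears on $E(F)$.

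The main obstacle is part (c), because $\gamma\in\pbar(F)\setminus\{1\}$ may equal $\varphi(rs_i)$ for some $i\in[2,t]$, so a naive $(\gamma,\delta)$-Kempe swap risks recoloring an edge of $F$ and breaking $(F,\varphi)$-stability. My plan is to establish $P_y(\gamma,\delta)=P_{s_j}(\gamma,\delta)$ and $r\in P_{s_j}(\gamma,\delta)$ together: since $r$ misses neither $\gamma$ nor $\delta$, if $r$ lies on the $(\gamma,\delta)$-chain then it is internal, with chain-edges $rz$ (colored $\gamma$) and $rs_{g^{-1}(j)}$ (colored $\delta$, via part (a)); tracing the chain in both directions and invoking elementarity of $V(S)$ forces its two endpoints to be exactly $y$ and $s_j$. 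Ruling out the unlinked subcase proceeds by applying the Kempe swap on whichever branch of the $(\gamma,\delta)$-chain avoids $y$, after verifying it avoids $E(F)$ via a case split on whether $\gamma\in\varphi(E(F))$; this yields an $(F,\varphi)$-stable coloring violating elementarity. The ``meets $z$ before $r$'' assertion is then a parity count: $y$ misses $\gamma$, so the chain leaves $y$ via a $\delta$-edge, alternates, and therefore arrives at $r$ via its $\gamma$-edge, which is $rz$. Finally, the ``even cycle'' alternative in (d) follows because the only vertices of $V(S)$ missing $\delta$ or $\delta^*$ are $s_j$ and $y$, both absent from $P_r$ whenever $r\notin P_{s_j}(\delta,\delta^*)$; if $P_r$ were a path, its endpoints would lie outside $V(S)$, and a Kempe swap on $P_r$ would produce an $(F,\varphi)$-stable coloring whose effect on $\varphi(rs_{g^{-1}(j)})$ and $\varphi(rs_{g^{-1}(k)})$ contradicts the rotation structure from part (a).
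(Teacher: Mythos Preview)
First, note that the paper does not give its own proof of this lemma: it is quoted verbatim as Theorem 2.5 of the companion paper~\cite{HZI}. So there is no ``paper's proof'' to compare against here; I can only evaluate your argument on its merits.

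Your treatment of parts~(a) and~(b), and of the linkedness assertion in~(d), is correct. The counting observation $\pbar(V(S))=[1,\Delta]$ is the right starting point, and the permutation argument for~(a) and the Kempe-swap-to-contradiction template for~(b) are sound.

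Part~(c), however, has two genuine gaps. First, your claim that ``tracing the chain in both directions and invoking elementarity of $V(S)$ forces its two endpoints to be exactly $y$ and $s_j$'' is not justified: elementarity of $V(S)$ only tells you which vertices \emph{inside} $V(S)$ miss $\gamma$ or $\delta$; it says nothing about vertices outside $N_{\Delta-1}[r]$, any of which could be an endpoint of the $(\gamma,\delta)$-chain through $r$. So this step does not establish $r\in P_y=P_{s_j}$. Second, and more seriously, your ``case split on whether $\gamma\in\varphi(E(F))$'' does not close. In the case $\gamma=\varphi(rs_i)$ for some $i\in[2,t]$, if additionally $r\in P_{s_j}(\gamma,\delta)$ while $y\notin P_{s_j}(\gamma,\delta)$, then swapping on $P_{s_j}$ recolors $rs_i\in E(F)$ from $\gamma$ to $\delta$, destroying $(F,\varphi)$-stability; you give no way to derive a contradiction in this subcase. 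This is the crux of why~(c) is harder than~(b), and the proposal does not resolve it.

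The ``even cycle'' clause of~(d) also has a gap. Your plan is: if $r\notin P_{s_j}(\delta,\delta^*)$ and $P_r(\delta,\delta^*)$ were a path, swap on $P_r$; this is $(F,\varphi)$-stable (correct), but you then assert it ``contradicts the rotation structure from part~(a)''. It does not. The swap exchanges the colors on $rs_{g^{-1}(j)}$ and $rs_{g^{-1}(k)}$ (where $\pbar(s_k)=\delta^*$), which simply modifies the permutation $g$ by a transposition of values; the resulting coloring still satisfies the hypothesis of~(a), with a new but perfectly valid rotation decomposition, and $V(S)$ remains elementary. You need a further argument---for instance, combining the swap with part~(b) applied to the new coloring in a way that produces an actual elementarity violation---to close this case.
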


\subsection{Lollipop}

If $F=(a_1,\ldots, a_t )$ is a sequence, then for a new 
entry $b$, 
$(F, b)$  denotes the sequence $(a_1,\ldots, a_t, b)$. 
Let $(G,rs_1,\varphi)$ be a coloring-triple. 
A \emph{lollipop}
centered at $r$  is a sequence 
$L=(F, ru, u,ux, x)$ of distinct vertices and edges such that $F=F_\varphi(r,s_1:s_\alpha:s_\beta)$ is a typical multifan, $u\in N_\Delta(r)$ and $x\in N_{\Delta-1}(u)$ 
with $x\not\in\{s_1,\ldots, s_\beta\}$ (see Figure~\ref{f1} (c) for a depiction).

\begin{LEM}[{\cite[Lemma 5.1]{HZI}}]\label{Lemma:extended multifan}
	Let  $(G,rs_1,\varphi)$ be a coloring-triple, 
	$F:=F_\varphi(r,s_1:s_\alpha:s_\beta)$ be a typical multifan, and  $L:=(F,ru,u,ux,x)$ be a lollipop centered at $r$ such that  $\varphi(ru)=\alpha+1$ and $\pbar(x)=\alpha+1$.  Then  
	\begin{enumerate}[(a)]
		\item  $\varphi(ux)\ne 1$ and  $ux\in P_r(1,\varphi(ux))$.  \label{Evizingfan-a}
		
		If $\varphi(ux)=\tau$ 
		is a 2-inducing color with respect to $\varphi$ and $F$, then the following  holds. 
		
		\item Let $P_x(1,\tau)$ be the $(1,\tau)$-chain starting at $x$ in $G-rs_1-ux$. Then $P_x(1,\tau)$ ends at $r$.  \label{Evizingfan-b}
		\item For any 2-inducing color $\delta$  of $F$ with $\tau\prec \delta$, 
		we have $r\in P_{s_1}(\delta,\Delta)=P_{s_{\delta-1}}(\delta,\Delta)$. \label{Evizingfan-c}
		\item  For any $\Delta$-inducing color $\delta$ of $F$, we have $r\in P_{s_{\delta-1}}(\alpha+1,\delta)=P_{s_{\alpha}}(\alpha+1,\delta)$, where $s_{\Delta-1}=s_1$ if $\delta=\Delta$.
		\label{Evizingfan-d}
		\item For any 2-inducing color $\delta$  of $F$ with $\delta\prec \tau$,  we have 
		$r\in P_{s_\alpha}(\delta, \alpha+1)=P_{s_{\delta-1}}(\delta, \alpha+1)$. \label{Evizingfan-e}
	\end{enumerate}
\end{LEM}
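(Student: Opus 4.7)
The plan is a cascade of Kempe-chain arguments built on Lemma~\ref{thm:vizing-fan1} (elementarity and the basic linkage) and Lemma~\ref{thm:vizing-fan2} (pairwise linkage of inducing colors). I will prove the statements in the order (a), (b), then (c)--(e), since each later part leans on the preceding ones.

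For \textbf{part (a)}, I first apply Lemma~\ref{thm:vizing-fan1}(b) to $1\in\pbar(r)$ and $\alpha+1\in\pbar(s_\alpha)$ to conclude that $P_r(1,\alpha+1)=P_{s_\alpha}(1,\alpha+1)$. This chain must begin $r,ru,u$; if $\varphi(ux)=1$ then $u$'s unique $1$-edge (which exists because $u$ has degree $\Delta$) is $ux$, so the chain is forced to $r,ru,u,ux,x$, at which point $\pbar(x)=\alpha+1$ terminates the chain at $x\ne s_\alpha$---a contradiction. For the second claim I suppose $\tau:=\varphi(ux)$ satisfies $ux\notin P_r(1,\tau)$ and swap colors on the chain $C:=P_{ux}(1,\tau)$. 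Using Lemma~\ref{thm:vizing-fan1}(b) I show that the unique $s_k\in V(F)$ missing $\tau$ actually lies on $P_r(1,\tau)$, not on $C$; every other $s_i$ is interior to $C$ or disjoint from it; $r\notin C$ by assumption; and $x$ is interior to $C$ since $\pbar(x)=\{\alpha+1\}$ is disjoint from $\{1,\tau\}$. Hence the swapped coloring $\varphi'$ is $(F,\varphi)$-stable and still admits $L$ as a lollipop, yet $\varphi'(ux)=1$, contradicting the first half of (a) applied to $\varphi'$.

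For \textbf{parts (b)--(e)}, (a) gives $ux\in P_r(1,\tau)$, and Lemma~\ref{thm:vizing-fan1}(b) pins the other endpoint of $P_r(1,\tau)$ to be the vertex of $V(F)$ missing $\tau$ (namely $s_{\tau-1}$, with the convention $s_0=s_1$ when $\tau=2$). Removing $ux$ splits $P_r(1,\tau)$ into two arcs, and the content of (b) is precisely that $x$ sits on the $r$-arc. To prove (b), I assume the contrary and swap colors $1,\tau$ along $P_r(1,\tau)$; after a controlled renaming of $1$ and $\tau$ on the off-chain edges, $F$ becomes a typical multifan on the same vertex set, but the resulting lollipop has $\varphi^\ast(ux)=1$ or else an $s_i$-missing set violating the elementarity forced by Lemma~\ref{thm:vizing-fan1}(a)---the contradiction. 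Parts (c), (d), (e) follow the same template applied to $(\delta,\Delta)$-, $(\alpha+1,\delta)$-, and $(\delta,\alpha+1)$-chains respectively, according to whether $\delta$ is $2$- or $\Delta$-inducing and whether it sits before or after $\tau$ in the $\prec$-order. For each such $\delta$, Lemma~\ref{thm:vizing-fan2} locates the two vertices of $V(F)$ missing the two relevant colors; I then assume the chain in question dodges $r$ and perform the associated Kempe swap, producing a coloring under which (a) or (b) is violated, forcing $r$ onto the chain.

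The \textbf{main obstacle} is the bookkeeping in (b)--(e): checking in each case that the Kempe swap preserves $F$ as a typical multifan up to a color renaming, keeps intact $\varphi(ru)=\alpha+1$ and $\pbar(x)=\alpha+1$, and propagates correctly through the $\prec$-ordered inducing colors. The most delicate situation is when some $s_i\in V(F)$ happens to lie in the interior, or even at an endpoint, of the chain being swapped---this is exactly where the linkage statements in Lemmas~\ref{thm:vizing-fan1}(b) and~\ref{thm:vizing-fan2}(a)(b) pin down the only admissible configurations and let the argument close.
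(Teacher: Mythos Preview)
This lemma is not proved in the present paper; it is imported verbatim from the companion paper~\cite{HZI} (Lemma~5.1 there), so there is no in-paper proof to compare against. I will therefore just assess your sketch on its own terms.

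Your argument for part~(a) is correct and complete: the $P_r(1,\alpha+1)=P_{s_\alpha}(1,\alpha+1)$ linkage from Lemma~\ref{thm:vizing-fan1}\eqref{thm:vizing-fan1b} rules out $\varphi(ux)=1$, and the off-chain Kempe change you describe cleanly reduces $ux\notin P_r(1,\tau)$ to the first half.

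Part~(b), however, has a genuine gap. Your proposed operation---swap $1,\tau$ along $P_r(1,\tau)$ and then rename $1\leftrightarrow\tau$ on the off-chain edges---is equivalent to performing Kempe changes on \emph{every $(1,\tau)$-chain except $P_r(1,\tau)$}. But by part~(a) the edge $ux$ lies on $P_r(1,\tau)$, so $ux$, $r$, $s_{\tau-1}$, and in fact every edge $rs_i$ incident with $r$ are untouched by this operation; and since no $s_i$ with $i\ne\tau-1$ misses $1$ or $\tau$, their missing sets are also unchanged. The resulting coloring agrees with $\varphi$ on all of $F$, on $ru$, on $ux$, and on $\pbar(x)$, so it cannot produce the contradiction you claim. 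What is actually at stake in (b) is the \emph{order} in which $P_r(1,\tau)$ meets $u$ versus $x$, and distinguishing the two orders requires a different manoeuvre (typically a partial recoloring along a subpath combined with a shift inside $F$, or an argument that routes through Lemma~\ref{thm:vizing-fan1}\eqref{thm:vizing-fan1a} after recoloring $ux$ and $ru$); a single whole-chain swap cannot see the difference.

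For (c)--(e) your template is the right instinct, but the sketch hides the main difficulty. When you swap on, say, $P_{s_\alpha}(\alpha+1,\delta)$ in~(d), the missing colour $\alpha+1$ migrates from $s_\alpha$ to $s_{\delta-1}$, which sits in the $\Delta$-inducing segment of $F$. After this swap $\alpha+1$ is no longer the last $2$-inducing colour of the (rearranged) multifan, so the hypothesis ``$\varphi(ru)=\alpha+1$'' of the lollipop no longer matches the typical-multifan framework, and you cannot invoke (a) or (b) for the new coloring without first rebuilding a typical multifan and checking that the lollipop structure survives. That rebuilding is where the real work lies, and it is not addressed in your outline.
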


Let  $(G,rs_1,\varphi)$ be a coloring-triple. 
For a color $\alpha \in [1,\Delta]$, a sequence of 
{\it Kempe  $(\alpha,*)$-changes}  is a sequence of  
Kempe changes that each involve the exchanging of the color $\alpha$
with another color from $[1,\Delta]$.   

\begin{LEM}[{\cite[Lemma 5.2]{HZI}}]\label{Lemma:pseudo-fan0}
	Let  $(G,rs_1,\varphi)$ be a coloring-triple, 
	$F:=F_\varphi(r,s_1:s_\alpha:s_\beta)$ be a typical multifan, and  $L:=(F,ru,u,ux,x)$ be a lollipop centered at $r$ such that  $\varphi(ru)=\alpha+1$.   Then for   $w_1\in \{s_{\beta+1}, \ldots, s_{\Delta-2} \}$ with $\varphi(rw_1)=\tau_1\in [\beta+2, \Delta-1]$,  the following statements hold.
	\begin{enumerate}[(1)] 
		\item \label{only-fan}	
		If exists a vertex $w\in V(G)\setminus (V(F)\cup \{w_1\})$ such that $w\in P_r(1,\tau_1,\varphi')$
		for every {\bf $(F,\varphi)$-stable} $\varphi' \in \CC^{\Delta}(G-rs_1)$ with $\varphi'(ru)=\alpha+1$,   
		then there exists a sequence of distinct vertices $w_1, \ldots, w_t\in  \{s_{\beta+1}, \ldots, s_{\Delta-2} \}$ satisfying the following conditions: 
		\begin{enumerate}[(a)]
			\item  $\varphi(rw_{i+1})=\pbar(w_i)\in  [\beta+2, \Delta-1]$ for each $i\in [1,t-1]$;
			\label{Lemma:pseudo-fan0-a}
			\item  $r$ and $w_i$  are $(1,\pbar(w_i))$-linked with respect to $\varphi$ for each $i\in[1,t]$; \label{Lemma:pseudo-fan0-b}
			\item $\pbar(w_t)=\tau_1$. \label{Lemma:pseudo-fan0-c}
		\end{enumerate}
		\item \label{fan-and-x}
		If  $\pbar(x)=\alpha+1$ and there exists a vertex $w\in V(G)\setminus (V(F)\cup \{w_1\})$ such that $w\in P_r(1,\tau_1,\varphi')$
		for every  {\bf $(L,\varphi)$-stable} $\varphi' \in \CC^{\Delta}(G-rs_1)$  obtained from $\varphi$ 
		through a sequence of Kempe  $(1,*)$-changes  not using  $r$
		or $x$ as endvertices,  
		then there exists  a sequence of distinct vertices $w_1, \ldots, w_t\in  \{s_{\beta+1}, \ldots, s_{\Delta-2} \}$ satisfying the following conditions: 
		\begin{enumerate}[(a)]
			\item  $\varphi(rw_{i+1})=\pbar(rw_i)\in  [\beta+2, \Delta-1]$  for each $i\in [1,t-1]$;
			\label{Lemma:pseudo-fan0-a1}
			\item $r$ and $w_i$   are $(1,\pbar(w_i))$-linked with respect to $\varphi$ for each $i\in[1,t-1]$; \label{Lemma:pseudo-fan0-b1}
			\item $\pbar(w_t)=\tau_1$ or $\pbar(w_t)=\alpha+1$.  If $\pbar(w_t)=\tau_1$,
			then $w_t$ and $r$ are $(1,\tau_{1})$-linked with respect to $\varphi$. \label{Lemma:pseudo-fan0-c1}
		\end{enumerate}
	\end{enumerate}
\end{LEM}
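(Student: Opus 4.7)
The plan is to construct the sequence $w_1, w_2, \ldots$ iteratively and verify conditions (a)--(c) by induction, using contradiction arguments against the hypothesis on $P_r(1,\tau_1,\varphi')$ to pin down which colors can appear as $\pbar(w_i)$. I would start with $w_1$ as given and set $\tau_1 = \varphi(rw_1)$. Inductively, suppose $w_1,\ldots,w_i$ have been constructed satisfying (a)--(c) but $\pbar(w_i) \neq \tau_1$, and set $\tau_{i+1} := \pbar(w_i)$, which is well-defined since each $w_j \in V_{\Delta-1}$ misses exactly one color. The goal of each inductive step is to show $\tau_{i+1} \in [\beta+2, \Delta-1]$, which then immediately produces a candidate $w_{i+1} \in \{s_{\beta+1},\ldots,s_{\Delta-2}\}$ with $\varphi(rw_{i+1}) = \tau_{i+1}$, since the colors on $r$'s edges into $\{s_{\beta+1},\ldots,s_{\Delta-2},u'\}$ (where $u'$ is $r$'s second $\Delta$-neighbor) exhaust the set $[\beta+1,\Delta-1]$.

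The heart of the argument is ruling out the forbidden values of $\tau_{i+1}$. If $\tau_{i+1} = 1$, the $(1,\tau_i)$-chain terminates at $w_i$, and cascading Kempe $(1,\tau_j)$-swaps back along the $w_j$'s for $j = i, i-1, \ldots, 2$ yield an $(F,\varphi)$-stable recoloring $\varphi'$ with $\varphi'(ru) = \alpha+1$ under which $P_r(1,\tau_1,\varphi')$ is confined to $V(F) \cup \{w_1,\ldots,w_i\}$, contradicting the hypothesis. If $\tau_{i+1} \in \pbar(F) \setminus \{1\}$, elementariness of $V(F)$ locates a unique $s_j \in V(F)$ missing $\tau_{i+1}$, and Lemma~\ref{thm:vizing-fan1} together with Lemma~\ref{pseudo-fan-ele} (applied through the pseudo-multifan containing $F$) forces a Kempe linkage between $r$ and $s_j$ that can be redirected to truncate $P_r(1,\tau_1)$, again contradicting the hypothesis. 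Condition (b) would be proved analogously: if $r$ and $w_{i+1}$ were $(1,\pbar(w_{i+1}))$-unlinked, swapping the $(1,\pbar(w_{i+1}))$-chain containing $w_{i+1}$ (which avoids $r$) produces a new stable coloring truncating the chain from $r$. Distinctness of the $w_i$'s follows from (a), because the colors $\varphi(rw_j)$ are pairwise distinct, and termination is forced by finiteness of $N_G(r)$.

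The distinction between (1) and (2) is precisely the strength of the stability hypothesis. In (1) one may perform any Kempe swap keeping $\varphi'$ $(F,\varphi)$-stable with $\varphi'(ru) = \alpha+1$, so in particular an $(\alpha+1,\tau_i)$-swap (on a chain avoiding $r$ and $u$) is available to eliminate the case $\tau_{i+1} = \alpha+1$. In (2), stability is restricted to $(L,\varphi)$-stable colorings reachable via Kempe $(1,*)$-changes avoiding $r$ and $x$ as endpoints, so that $(\alpha+1,\tau_i)$-swap is out of reach, and $\pbar(w_t) = \alpha+1$ must be admitted as an alternative termination. The main obstacle I expect is designing the cascading Kempe-swap sequences so that $F$-stability (and, in case (2), $L$-stability) is preserved at each intermediate coloring; this demands careful bookkeeping of which colors sit on which edges and whether the swapped chains meet the fan, the edges $ru$, $ux$, or the previously constructed $w_j$'s.
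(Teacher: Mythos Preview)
This lemma is not proved in the present paper; it is imported verbatim from \cite[Lemma~5.2]{HZI}, so there is no proof here to compare against. Your iterative construction with cascading $(1,*)$-Kempe changes is indeed the right skeleton, and your identification of the key difference between parts (1) and (2)---namely that in (2) one cannot perform a swap that might terminate at $x$, forcing $\pbar(w_t)=\alpha+1$ to be admitted---is exactly the point.

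Two concrete issues deserve attention. First, your appeal to Lemma~\ref{pseudo-fan-ele} (``applied through the pseudo-multifan containing $F$'') is circular: the hypotheses of Lemma~\ref{Lemma:pseudo-fan0} do not assume that $N_{\Delta-1}[r]$ forms a pseudo-multifan, and in the paper this lemma is precisely one of the tools used to \emph{establish} the pseudo-multifan structure (see Claim~\ref{pseudo-fan}). Fortunately Lemma~\ref{thm:vizing-fan1}\,\eqref{thm:vizing-fan1b} alone already gives what you need: if $\tau_{i+1}=\pbar(w_i)\in\pbar(F)\setminus\{1\}$ then $r$ and $\pbar_F^{-1}(\tau_{i+1})$ are $(1,\tau_{i+1})$-linked, so $w_i$ is $(1,\tau_{i+1})$-unlinked from $r$, and a $(1,\tau_{i+1})$-swap at $w_i$ reduces to the case $\pbar(w_i)=1$. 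Second, the $(\alpha+1,\tau_i)$-swap you propose for part (1) cannot avoid $r$: the $(\alpha+1,\tau_i)$-chain through $w_i$ necessarily contains the edge $rw_i$ (colored $\tau_i$) and then $ru$ (colored $\alpha+1$), so it passes through both $r$ and $u$ and would destroy $\varphi'(ru)=\alpha+1$. The correct move, already covered by your treatment of $\tau_{i+1}\in\pbar(F)\setminus\{1\}$ once the circularity is fixed, is a $(1,\alpha+1)$-swap at $w_i$; in part (2) this swap may terminate at $x$ and is therefore forbidden, which is why $\pbar(w_t)=\alpha+1$ survives as a possible conclusion there.
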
 

By the definition, the sequence $w_1, \ldots, w_t$ in Lemma~\ref{Lemma:pseudo-fan0}~\eqref{only-fan} and in the case of  Lemma~\ref{Lemma:pseudo-fan0}~\eqref{fan-and-x} when $\pbar(w_t)=\tau_1$ form a rotation
with the additional property that $\pbar(w_i) \in [\beta+2,\Delta-1]$ and $r$ and $w_i$ are $(1,\pbar(w_i))$-linked for each $i\in[1,t]$. We call such a rotation a \emph{stable rotation}. 
In the case of  Lemma~\ref{Lemma:pseudo-fan0}~\eqref{fan-and-x} when $\pbar(w_t)=\alpha+1$,
we call $w_1, \ldots, w_t$ a \emph{near stable rotation}. 
For $u,v\in V(G)$, we write $u\sim v$ if $u$ and $v$ are adjacent in $G$, and write $u\not\sim v$ otherwise.

\begin{LEM}[{\cite[Corollary 2.7]{HZI}}]\label{Lem:2-non-adj1}
	Let  $(G,rs_1,\varphi)$ be a coloring-triple,
	$F:=F_\varphi(r,s_1:s_\alpha)$ be a typical 2-inducing  multifan, and  $L:=(F,ru,u,ux,x)$ be a lollipop centered at $r$.  If $\varphi(ru)=\alpha+1$, $\pbar(x)=\alpha+1$, 
	and $\varphi(ux)=\Delta$, 
	then  $u\not\sim s_1$ and $u\not\sim s_\alpha$. 
\end{LEM}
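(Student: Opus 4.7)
Suppose for contradiction that $u\sim s_1$; the case $u\sim s_\alpha$ will be handled by a parallel argument. The strategy is to exhibit a pair of Kempe changes on $\varphi$ that produces an edge $\Delta$-coloring of $G$, contradicting the class-$2$ hypothesis on $G$.

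First, the three preceding lemmas pin down the necessary chains under $\varphi$. Lemma~\ref{Lemma:extended multifan}(a) gives $ux\in P_r(1,\Delta)$, and Lemma~\ref{thm:vizing-fan1}(b) gives $s_1\in P_r(1,\Delta)$, so $P_r(1,\Delta)$ is a path from $r$ to $s_1$ containing the edge $ux$. Lemma~\ref{thm:vizing-fan1}(b) also yields $s_\alpha\in P_r(1,\alpha+1)$, and since $\varphi(ru)=\alpha+1$ this chain has the form $r-u-\cdots-s_\alpha$. Finally, Lemma~\ref{thm:vizing-fan2}(a) applied to the distinct inducers $2$ and $\Delta$ of $\pbar(s_1)$ gives $P_{s_1}(\alpha+1,\Delta)=P_{s_\alpha}(\alpha+1,\Delta)$. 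If $r$ were on this chain, then $u$ and $x$ would be too (via $ru$ and $ux$), making $x$ a third endpoint alongside $s_1$ and $s_\alpha$, a contradiction. Hence $r\notin P_{s_1}(\alpha+1,\Delta)$ under $\varphi$.

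The two Kempe changes are: (i) a $(1,\alpha+1)$-swap along $P_r(1,\alpha+1)$, producing $\varphi'\in\CC^\Delta(G-rs_1)$ with $\pbar'(r)=\{\alpha+1\}$; since $s_1$ is not an endpoint of the swapped chain (its endpoints are $r$ and $s_\alpha$), the swap preserves $\pbar'(s_1)=\{2,\Delta\}$; and (ii) an $(\alpha+1,\Delta)$-swap along $P_r(\alpha+1,\Delta)$ computed under $\varphi'$, which, provided $s_1\notin P_r(\alpha+1,\Delta)$ under $\varphi'$, produces $\varphi''$ with $\pbar''(r)=\{\Delta\}$ and $\pbar''(s_1)=\{2,\Delta\}$ intact. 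Coloring $rs_1$ with $\Delta$ then yields the sought edge $\Delta$-coloring of $G$.

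The crux is thus to verify $s_1\notin P_r(\alpha+1,\Delta)$ under $\varphi'$. When $\varphi(us_1)=1$ this is immediate: the first swap sets $\varphi'(us_1)=\alpha+1$, so the $(\alpha+1,\Delta)$-chain through $s_1$ is simply $s_1-u-x$, terminating at $x$ (missing $\alpha+1$) and avoiding $r$. When $\varphi(us_1)\ne 1$, one tracks how the $(1,\alpha+1)$-swap alters each $\alpha+1$-colored edge lying on $P_r(1,\alpha+1)$ and uses the disjointness under $\varphi$ of the $r$-chain and $s_1$-chain of $(\alpha+1,\Delta)$-alternation to verify that the swap does not merge them. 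The case $u\sim s_\alpha$ follows by the same recipe, with sub-cases $\varphi(us_\alpha)=1$, where $P_r(1,\alpha+1)$ collapses to the three-vertex path $r-u-s_\alpha$, and $\varphi(us_\alpha)\ne 1$. The main obstacle in both non-adjacency claims is precisely this edge-level bookkeeping in the sub-case where the color of $us_1$ (respectively $us_\alpha$) is not $1$: one must ensure that the first swap does not create a long $(\alpha+1,\Delta)$-chain under $\varphi'$ that absorbs $s_1$ into $P_r(\alpha+1,\Delta)$.
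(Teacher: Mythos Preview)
The paper does not contain its own proof of this lemma; it is imported verbatim from the companion paper~\cite{HZI} as Corollary~2.7 there. So there is no in-paper argument to compare against, and your proposal must stand on its own.

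Your strategy---a $(1,\alpha+1)$-swap along $P_r(1,\alpha+1)$ followed by an $(\alpha+1,\Delta)$-swap at $r$, then coloring $rs_1$ by $\Delta$---is sound \emph{provided} you can verify $s_1\notin P_r(\alpha+1,\Delta,\varphi')$. You do verify this cleanly when $\varphi(us_1)=1$: the swap puts $\varphi'(us_1)=\alpha+1$, forcing $P_{s_1}(\alpha+1,\Delta,\varphi')=s_1ux$, which avoids $r$. That sub-case is correct.

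The gap is the sub-case $\varphi(us_1)\ne 1$. Here you only gesture at ``edge-level bookkeeping'' and invoke disjointness, under $\varphi$, of the $(\alpha+1,\Delta)$-component through $r$ and the chain $P_{s_1}(\alpha+1,\Delta)$. But that disjointness is a fact about $\varphi$, not $\varphi'$: the $(1,\alpha+1)$-swap recolors many $\alpha+1$-edges to $1$ and many $1$-edges to $\alpha+1$, so the $(\alpha+1,\Delta)$-chain structure under $\varphi'$ can be entirely different, and nothing you have written prevents $s_1$ from landing on $P_r(\alpha+1,\Delta,\varphi')$. More tellingly, in this sub-case your argument never uses the hypothesis $u\sim s_1$ at all. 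If the two swaps always produced a $\Delta$-coloring from the lollipop data $(\varphi(ru)=\alpha+1,\ \varphi(ux)=\Delta,\ \pbar(x)=\alpha+1)$, you would have shown that no such lollipop exists in a class~2 graph---which is false, since the lemma asserts such lollipops do exist and merely forces $u\not\sim s_1,s_\alpha$. So the claim $s_1\notin P_r(\alpha+1,\Delta,\varphi')$ must fail in general, and you need the adjacency $u\sim s_1$ to rule out exactly those failures. You have not supplied that mechanism. The parallel remark applies to the $u\sim s_\alpha$ case when $\varphi(us_\alpha)\ne 1$.

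To repair this, you would need either a preliminary sequence of Kempe changes that reduces to the favorable sub-case $\varphi(us_1)=1$ while preserving the multifan and lollipop structure, or a genuinely different argument in the bad sub-case that exploits the edge $us_1$ explicitly.
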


\begin{LEM}[ {\cite[Theorem 2.8]{HZI}}]\label{Lem:2-non-adj2}
	Let  $(G,rs_1,\varphi)$ be a coloring-triple, 
	$F:=F_\varphi(r,s_1:s_\alpha)$ be a typical 2-inducing  multifan, and  $L:=(F,ru,u,ux,x)$ be a lollipop centered at $r$.  If $\varphi(ru)=\alpha+1$, $\pbar(x)=\alpha+1$, 
	and $\varphi(ux)=\mu\in \pbar(F)$ is a 2-inducing color of $F$, 
	then  $u\not\sim s_{\mu-1}$ and $u\not\sim s_\mu$.  
\end{LEM}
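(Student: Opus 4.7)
My plan is to argue by contradiction: suppose $u\sim s_j$ for some $j\in\{\mu-1,\mu\}$, and set $\eta:=\varphi(us_j)$. First I pin down the possible values of $\eta$: since $u$ already bears $ru$ of color $\alpha+1$ and $ux$ of color $\mu$, while $s_j$ (for $j\geq 2$) bears $rs_j$ of color $j$ and is missing $j+1$, with the analogous check when $j=1$, we have $\eta\in[1,\Delta]\setminus\{\mu,\alpha+1,j,j+1\}$. Note also that $\mu\in[2,\alpha]$, since $\pbar(x)=\alpha+1$ forces $\varphi(ux)=\mu\ne\alpha+1$.

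The main structural input is Lemma~\ref{Lemma:extended multifan}(a),(b), which gives $ux\in P_r(1,\mu)$ and $P_x(1,\mu)$ in $G-rs_1-ux$ ending at $r$. Consequently, in $G-rs_1$ the $(1,\mu)$-chain through $r$ takes the form
\[
r\,-\,s_\mu\,-\,\cdots\,-\,x\,-\,u\,-\,z\,-\,\cdots,
\]
starting with the $\mu$-edge $rs_\mu$, containing the $\mu$-edge $ux$ with $x$ immediately preceding $u$, and continuing past $u$ via a 1-edge. I would then dispose of the base case $\eta=1$. If $j=\mu$, then $us_\mu$ is forced to be $s_\mu$'s 1-edge in the chain, which pushes the chain into the form $r-s_\mu-u-x-\cdots$; after removing $ux$, the vertex $x$ lies in a component disjoint from $r$, contradicting Lemma~\ref{Lemma:extended multifan}(b). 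If $j=\mu-1$, then $\pbar(s_{\mu-1})=\mu$ makes $s_{\mu-1}$ the other endpoint of the chain; I would apply the Kempe $(1,\mu)$-swap on it to produce $\varphi'$ with $\pbar'(r)=\mu$ and $\pbar'(s_{\mu-1})=1$, follow with a shift within $F$ from $s_{\mu-1}$ down to $s_1$, and finish with a Kempe $(1,2)$- or $(1,\Delta)$-swap at $s_1$ to extend $\varphi'$ to $rs_1$, contradicting $\chi'(G)=\Delta+1$.

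For general $\eta\ne 1$, the plan is to apply a Kempe $(1,\eta)$-swap at a suitable endpoint of the $(1,\eta)$-chain through $us_j$ so as to reach an $(F,\varphi)$-stable coloring $\varphi'$ with $\varphi'(us_j)=1$, reducing to the base case. When $\eta\in[\alpha+2,\Delta-1]$ this is straightforward, since no vertex of $V(F)\setminus\{r\}$ lies on the swap chain and the chain identities of Lemma~\ref{Lemma:extended multifan}(c)--(e) allow me to reroute around $r$. The hard case is $\eta\in\{2,\ldots,\alpha+1,\Delta\}$, where $\eta$ is a structural color of $F$ and the $(1,\eta)$-chain necessarily endpoints at a vertex of $V(F)$, so a naive swap would destroy $F$; my fallback is to first apply Lemma~\ref{2-inducing} to replace $F$ by an equivalent multifan under a modified coloring in which $\eta$ is no longer a missing color inside $F$, and then perform the $(1,\eta)$-swap. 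The main obstacle is exactly this bookkeeping: verifying that after the composition of shifts and swaps the adjacency hypothesis $u\sim s_j$ persists in the new coloring with a color that again fits the base case. Here I anticipate needing to track the colors carefully through the shifts and to invoke Lemma~\ref{Lem:2-non-adj1} as a closing step in any residual subcase where the swap turns the 2-inducing configuration of $ux$ into a $\Delta$-inducing one.
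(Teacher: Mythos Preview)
This lemma is not proved in the present paper; it is quoted from the companion paper~\cite{HZI} (their Theorem~2.8), so there is no ``paper's own proof'' here to compare against. I can therefore only assess your plan on its own merits.

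Your base case $\eta=1$, $j=\mu$ is fine, but the sub-case $\eta=1$, $j=\mu-1$ does not go through as written. After the Kempe $(1,\mu)$-swap on $P_r(1,\mu)=P_{s_{\mu-1}}(1,\mu)$, the edge $us_{\mu-1}$ changes colour from $1$ to $\mu$, so $s_{\mu-1}$ now carries a $\mu$-edge. Your proposed ``shift within $F$ from $s_{\mu-1}$ down to $s_1$''---under any reasonable reading---requires recolouring $rs_{\mu-1}$ by $\mu$ (since $\pbar'(r)=\mu$), which is illegal at $s_{\mu-1}$; and the alternative reading as a forward shift would recolour $rs_{\mu-1}$ by $\pbar'(s_{\mu-1})=1$, clashing with $\varphi'(rs_\mu)=1$ at $r$. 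Either way the shift fails, and the subsequent $(1,2)$- or $(1,\Delta)$-swap never gets off the ground. This sub-case needs a genuinely different sequence of operations (typically a simultaneous recolouring of $ru$, $ux$, $us_{\mu-1}$ combined with a partial fan shift, or an argument via the $(\alpha+1,\mu)$-chain), not a single swap-then-shift.

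Your reduction from $\eta\ne 1$ to $\eta=1$ is also too optimistic. The issue is that $us_j$ may well lie on $P_r(1,\eta)$, in which case the swap moves $\pbar(r)$ and the coloring is no longer $(F,\varphi)$-stable; invoking Lemma~\ref{Lemma:extended multifan}(c)--(e) does not help, since those identities concern $(\delta,\Delta)$- and $(\delta,\alpha+1)$-chains for specific $\delta$, not $(1,\eta)$-chains for $\eta\in[\alpha+2,\Delta-1]$. And for $\eta\in\pbar(F)\setminus\{1\}$ you suggest appealing to Lemma~\ref{2-inducing}, but that lemma is about pseudo-multifans, not arbitrary typical multifans; it does not let you ``remove $\eta$ from $\pbar(F)$'' in the way you need. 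The actual proof in~\cite{HZI} is a lengthy case analysis precisely because these reductions do not collapse cleanly.
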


Let $G$ be a graph,  $rs_1\in E(G)$ and $\varphi\in \CC^\Delta(G-rs_1)$.
Let $\alpha,\beta,\gamma,\tau\in [1,\Delta] $ and $x,y\in V(G)$. 
If $P$ is an 
$(\alpha,\beta)$-chain  containing both $x$ and $y$ such that $P$ is a path,  
we denote by $\mathit{P_{[x,y]}(\alpha,\beta, \varphi)}$  the subchain  of $P$ that has endvertices $x$
and $y$.  

Suppose  $|\pbar(x)\cap \{\alpha,\beta\}|=1$. Then an \emph{$(\alpha,\beta)$-swap} 
at $x$ is just the Kempe change on $P_x(\alpha,\beta,\varphi)$. By convention, an	$(\alpha,\alpha)$-swap at $x$ does  nothing at $x$. If also $|\pbar(y)\cap \{\alpha,\beta\}|=1$, then an 
$(\alpha,\beta)$-swap \emph{at both $x$ and $y$} is the Kempe change on $P_x(\alpha,\beta,\varphi)$ if $x$ 
and $y$ are $(\alpha,\beta)$-linked with respect to $\varphi$, and is obtained 
from $\varphi$ by first doing an $(\alpha,\beta)$-swap at $x$
and then doing an $(\alpha,\beta)$-swap at $y$ if $x$ 
and $y$ are $(\alpha,\beta)$-unlinked with respect to $\varphi$. 
Suppose $\beta_0\in \pbar(x)$
and $\beta_1,\ldots \beta_t\in \varphi(x)$ for colors $\beta_0, \ldots, \beta_t\in [1,\Delta]$
for some integer $t \ge 1$. Then a 
$$
(\beta_0,\beta_1)-(\beta_1,\beta_2)-\ldots-(\beta_{t-1},\beta_t)-\text{swap}
$$  
at $x$ consists of $t$ Kempe changes: let $\varphi_0=\varphi$,
then $\varphi_i=\varphi_{i-1}/P_x(\beta_{i-1},\beta_i, \varphi_{i-1})$ for each $i\in [1,t]$.  
Suppose the current color of an  edge $uv$ of $G$
is $\alpha$, the notation  $\mathit{uv: \alpha\rightarrow \beta}$  means to recolor  the edge  $uv$ using the color $\beta$. 
%When recoloring a graph, we say the current coloring is {\it conflicting at $x$ with respect to a color}
%if there are at least two edges incident to $x$ that are colored  by the specified color.

We will use a  matrix with two rows to denote a sequence of coloring operations  taken based on $\varphi$.
For example,   the matrix below indicates 
three operations taken on the graph:
\[
\begin{bmatrix}
P_{[a, b]}(\alpha, \beta,\varphi)  & s_c:s_{d} & rs\\
\alpha/\beta & \text{shift} & \gamma \rightarrow \tau 
\end{bmatrix}.
\]
\begin{enumerate}[Step 1]
	\item Exchange $\alpha$ and $\beta$ on the $(\alpha,\beta)$-subchain $P_{[a, b]}(\alpha, \beta,\varphi)$.
	%that starts at the vertex $a$ and ends at the vertex $b$ with respect to $\varphi$. 
	\item Based on the coloring obtained from  Step 1, shift from $s_c$ to $s_d$
	for vertices $s_c, \ldots, s_d$. 
	
	\item Based on the coloring obtained from  Step 2,  do  $rs: \gamma \rightarrow \tau $. 
\end{enumerate}

In the reminder,  for simpler description,  we may skip the phrase ``with respect to $\varphi$'' in related notation, which then needs to be understood with respect to the current edge coloring. 

\section{Proof of Theorem~\ref{Thm:vizing-fan}}\label{thm2.1proof}
  We prove the following version of Theorem \ref{Thm:vizing-fan}.

\begin{THM}\label{Thm:vizing-fan2b}
	If $G$ is an HZ-graph with maximum degree $\Delta\ge 4$, then for every vertex $r\in V_{\Delta}$, the following two statements hold. 
	\begin{enumerate}[(i)]
		\item For every $u\in N_{\Delta}(r)$, 
		$N_{\Delta-1}(r)=N_{\Delta-1}(u)$.  \label{common2}
		\item There exist $s_1\in N_{\Delta-1}(r)$ 
		and  
		$\varphi\in \CC^\Delta(G-rs_1)$ such that $N_{\Delta-1}[r]$ is the vertex set of  a typical 2-inducing pseudo-multifan with respect to $rs_1$ and $\varphi$.   Consequently $N_{\Delta-1}[r]$ 
		is $\varphi$-elementary.  
		\label{ele2}
	\end{enumerate}
\end{THM}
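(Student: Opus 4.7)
My plan is to establish (ii) first and then derive (i) from the structure that (ii) provides. For (ii), fix an arbitrary $s_1 \in N_{\Delta-1}(r)$; since every edge of $G$ is critical by Lemma \ref{biregular}(a), the set $\CC^\Delta(G-rs_1)$ is nonempty, so choose $\varphi$ in it that maximizes the order of a maximum multifan $F$ at $r$. By Lemma \ref{2-inducing} arrange $F = F_\varphi(r, s_1:s_\alpha)$ to be typical 2-inducing; the standing convention then gives $V(F)\setminus\{r\} \subseteq N_{\Delta-1}(r)$. Extend $F$ to a maximum pseudo-multifan $S = S_\varphi(r, s_1:s_\alpha:s_p)$. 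By Lemma \ref{pseudo-fan-ele}(a), the tail $\{s_{\alpha+1},\dots,s_p\}$ partitions into rotations, so these vertices also lie in $N_{\Delta-1}(r)$. The crux is to show $p = \Delta-2$ and $\{s_1,\dots,s_{\Delta-2}\} = N_{\Delta-1}(r)$; once this is done, elementarity of $N_{\Delta-1}[r]$ is immediate from condition (P2) of the pseudo-multifan.

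To establish exhaustion, I would suppose toward contradiction that some $y \in N_{\Delta-1}(r)\setminus V(S)$ exists, and let $\tau = \varphi(ry)$. If $\tau$ or some color in $\pbar(y)$ interacts with $\pbar(V(S))$ in a way permitted by $(F,\varphi)$-stability, then a suitable shift either enlarges $F$ (contradicting its maximality) or enlarges $S$ (contradicting its maximality as a pseudo-multifan). Otherwise, let $u \in N_\Delta(r)$ and consider the lollipop $L = (F, ru, u, uy, y)$. After Kempe changes inside $V(S)$ that preserve the pseudo-multifan structure, arrange $\varphi(ru) = \alpha+1$ and $\pbar(y) = \alpha+1$, and apply Lemma \ref{Lemma:pseudo-fan0}: the assumption $y \notin V(S)$ forces a stable or near-stable rotation terminating at $y$, which by Lemma \ref{pseudo-fan-ele}(a) would already appear inside the maximum pseudo-multifan $S$, a contradiction. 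Lemmas \ref{Lem:2-non-adj1} and \ref{Lem:2-non-adj2} are used here to block the degenerate cases where $\varphi(uy)=\Delta$ or $\varphi(uy)$ is a 2-inducing color of $F$, which would otherwise obstruct the rotation extension by ruling out the necessary adjacencies at $u$. Together the cases force $V(S) = N_{\Delta-1}[r]$.

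For (i), fix $u \in N_\Delta(r)$; by Lemma \ref{biregular} we have $|N_{\Delta-1}(u)| = |N_{\Delta-1}(r)| = \Delta-2$, so it suffices to exclude the existence of $y \in N_{\Delta-1}(u)\setminus N_{\Delta-1}(r)$. Using the coloring $\varphi$ and pseudo-multifan $S$ from (ii), we have $y \notin V(S) = N_{\Delta-1}[r]$ yet $y \in N(u)$, so the lollipop $L = (F, ru, u, uy, y)$ is well defined. Arranging $\varphi(ru)=\alpha+1$ and $\pbar(y)=\alpha+1$ via Kempe shifts within $S$, Lemma \ref{Lemma:extended multifan}(a) forces $\varphi(uy)$ to be either $\Delta$ or a 2-inducing color of $F$, at which point Lemmas \ref{Lem:2-non-adj1} and \ref{Lem:2-non-adj2} yield non-adjacencies between $u$ and specific multifan vertices $s_j$. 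These $s_j$ lie in $N_{\Delta-1}(r)$ and, by a pigeonhole argument against $|N_{\Delta-1}(u)| = \Delta-2$ combined with the $\varphi$-elementarity of $V(S)$, must themselves belong to $N_{\Delta-1}(u)$ — the desired contradiction. The main obstacle is the Step 3 argument in (ii): forcing an excluded vertex $y$ into a rotation via the lollipop machinery requires delicate tracking of $(1,\tau)$-chains through $r$ and through the other rotations of $S$, and the \emph{stable rotation} concept furnished by Lemma \ref{Lemma:pseudo-fan0} is what makes this forced extension possible.
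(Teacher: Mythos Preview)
Your plan reverses the logical order that the paper uses, and the reversal creates a genuine circularity. In your argument for (ii) you take $y\in N_{\Delta-1}(r)\setminus V(S)$ and then form the lollipop $L=(F,ru,u,uy,y)$ for some $u\in N_\Delta(r)$. But a lollipop requires $uy\in E(G)$, i.e.\ $y\in N_{\Delta-1}(u)$, and that is precisely statement (i), which you have not yet proved. There are only two vertices in $N_\Delta(r)$ (since $G_\Delta$ is $2$-regular), and without (i) you have no guarantee that either of them is adjacent to the particular $y$ you have fixed; indeed, the paper proves (i) first and then, in Claim~\ref{pseudo-fan}, explicitly uses the hypothesis $N_{\Delta-1}(r)=N_{\Delta-1}(u)$ to conclude $uz\in E(G)$ and build the lollipop. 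Your invocation of Lemma~\ref{pseudo-fan-ele}(a) is also premature, since that lemma is stated for pseudo-multifans with $p=\Delta-2$, which is exactly what you are trying to establish.

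Your sketch for (i) also has a gap at the endgame. Lemmas~\ref{Lem:2-non-adj1} and~\ref{Lem:2-non-adj2} give you $u\not\sim s_j$ for one or two specific $s_j\in V(F)$, but this alone is no contradiction: you assumed only that \emph{some} $y$ lies in $N_{\Delta-1}(u)\setminus N_{\Delta-1}(r)$, so losing one or two vertices of $N_{\Delta-1}(r)$ from $N_{\Delta-1}(u)$ is perfectly consistent with the degree count. The paper's proof escalates through Claims~\ref{u-outneighbor-x}, \ref{u-2-out-neighbor}, and~\ref{u-3-neighbor} to force $|N_{\Delta-1}(u)\setminus N_{\Delta-1}(r)|\ge 2$ and then $\ge 3$, using Claim~\ref{u-sharecolor} (whose proof is itself a nontrivial application of Lemma~\ref{Lemma:pseudo-fan0}, not a direct consequence of Lemma~\ref{Lemma:extended multifan}(a) as you suggest) and Claim~\ref{u-2-out-neighbor} at each stage; the final contradiction comes from a careful analysis of two interacting near-stable rotations, not from pigeonhole on $|N_{\Delta-1}(u)|$.
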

\begin{proof}
	Let  $N_{\Delta-1}(r)=\{s_1,\ldots, s_{\Delta-2}\}$. 
	We choose a vertex in $N_{\Delta-1}(r)$, say $s_1$, a coloring $\varphi\in\CC^\Delta(G-rs_1)$ and a multifan $F$  with respect to $rs_1$
	and $\varphi$ such that $F$ is maximum at $r$. 
	That is, $|V(F)|$ is maximum among all multifans with respect to $rs_i$ for any $i\in[1,\Delta-2]$ and any $\varphi'\in \CC^\Delta(G-rs_i)$.
	Assume that $\pbar(r)=1$ and $\pbar(s_1)=\{2,\Delta\}$, and $F=F_\varphi(r,s_1:s_p)$ 
	is such a multifan. Furthermore, by relabeling vertices and colors, we assume that $F$ is typical. 
	As a maximum multifan at $r$ is itself a pseudo-multifan, by Lemma~\ref{2-inducing}, we  assume that 
	$F_\varphi(r,s_1:s_p)=F_\varphi(r,s_1:s_\alpha)$ is a typical  2-inducing 
	multifan, where $\alpha=p$.  
	
	Let $u\in N_{\Delta}(r)$ and assume $N_{\Delta-1}(r) \ne N_{\Delta-1}(u)$. Roughly speaking,  the main proof idea is the following. 
	By assuming $\varphi(ru)=\alpha+1$ and $\pbar(x)=\alpha+1$
	for $x\in N_{\Delta-1}(u)\setminus N_{\Delta-1}(r)$, we will apply Lemmas~\ref{Lem:2-non-adj1} and \ref{Lem:2-non-adj2} to show that $u$ has at least two $(\Delta-1)$-neighbors outside
	of $N_{\Delta-1}(r)$. By further applying Lemmas~\ref{Lem:2-non-adj1} and \ref{Lem:2-non-adj2}, 
	we can even find three $(\Delta-1)$-neighbors of $u$ outside
	of $N_{\Delta-1}(r)	$.  A contradiction is then deduced at that point.

	\begin{CLA}\label{ux-color}
			We may  assume that $\varphi(ru)=\alpha+1$, which is the last $2$-inducing color of 
		$F_\varphi(r,s_1:s_\alpha)$. 
	\end{CLA}
	\proof[Proof of Claim~\ref{ux-color}]
	Since $F_\varphi(r,s_1:s_\alpha)$ is a maximum typical 2-inducing   multifan, 
	$\varphi(ru)\in \{\alpha+1, \Delta\}$. 
	Assume instead that $\varphi(ru)=\Delta$. 
	If $\alpha=1$, then we are done by exchanging the 
	roles of $2$ and $\Delta$. Thus we assume that $\alpha\ge 2$. 
	Shift from $s_2$ to $s_{\alpha-1}$, 
	color $rs_1$ by 2 and uncolor $rs_\alpha$. 
	Then $F^*=(r,rs_\alpha,s_\alpha, rs_{\alpha-1},s_{\alpha-1}, \ldots, rs_1, s_1)$ is an $\alpha$-inducing multifan such that $\Delta$
	is the last $\alpha$-inducing color. 
	Now,  relabeling colors and vertices 
	in $F^*$ by making $F^*$ typical 2-inducing  yields the desired assumption.
	\qed

	\begin{CLA}\label{u-sharecolor}
		For any $z\in N_{\Delta-1}(u)\setminus V(F)$ and any $(F,\varphi)$-stable $\varphi'\in \CC^\Delta(G-rs_1)$,  if $\varphi'(ru)=\alpha+1$ and $\pbar'(z)=\alpha+1$, 
		then $\varphi'(uz)\in \pbar'(F)\setminus\{1\}$. 
	\end{CLA}
	\proof[Proof of Claim~\ref{u-sharecolor}]
	Assume to the contrary that $\varphi'(uz)\in \{1, \alpha+2,\dots, \Delta-1\}$. 
	We first claim that $\varphi'(uz)\ne 1$. 
	As otherwise,  $P_r(1,\alpha+1, \varphi')=ruz$, 
	contradicting Lemma~\ref{thm:vizing-fan1} \eqref{thm:vizing-fan1b} that $r$
	and $s_\alpha$ are $(1,\alpha+1)$-linked with respect to $\varphi'$. Let 
	$\varphi'(uz)=\tau\in  [\alpha+2,\Delta-1]$, and $w_1\in N_{\Delta-1}(r)$ such that $\varphi'(rw_1)=\tau$.   By Lemma~\ref{Lemma:extended multifan} \eqref{Evizingfan-a},  $uz\in P_r(1,\tau,\varphi'')$
	for every  $(L,\varphi')$-stable $\varphi'' \in \CC^{\Delta}(G-rs_1)$, 
	where $L=(F,ru,u,uz,z)$ is a lollipop. 
	Applying Lemma~\ref{Lemma:pseudo-fan0} \eqref{fan-and-x} on $L$ with $u$ playing the role of $w$,  
	we find a sequence of distinct vertices $w_1, \ldots, w_t\in  \{s_{\alpha+1}, \ldots, s_{\Delta-2} \}$ that forms either a stable rotation or a near stable rotation. 
	
	Assume first that $w_1, \ldots, w_t$ is a stable rotation,
	which in particular gives $P_r(1,\tau,\varphi')=P_{w_t}(1,\tau, \varphi')$.  
	 By Lemma~\ref{Lemma:extended multifan} \eqref{Evizingfan-a}, $uz\in P_r(1,\tau,\varphi')$. 
	If  $P_{r}(1,\tau,\varphi')$ meets $z$ before $u$, or 
	equivalently, $P_{w_t}(1,\tau,\varphi')$  meets $u$ before $z$, 
	we do the following operations:
	\[
	\begin{bmatrix}
	P_{[r,z]}(1,\tau,\varphi') & ru  &uz\\
	1/\tau & \alpha+1 \rightarrow \tau & \tau \rightarrow \alpha+1
	\end{bmatrix}.
	\]
	Denote the new coloring by $\varphi''$. Now $(r,rs_1,s_1,\ldots,s_{\alpha})$ is a multifan, but $\pbar''(s_{\alpha})=\pbar''(r)=\alpha+1$, giving a contradiction to Lemma~\ref{thm:vizing-fan1} \eqref{thm:vizing-fan1a}.
	Thus $P_{r}(1,\tau,\varphi')$ meets $u$ before $z$, or  equivalently, $P_{w_t}(1,\tau,\varphi')$  meets $z$ before $u$. 
	Shift  from $w_1$
	to $w_t$ to get $\varphi''$. Then $P_{r}(1,\tau, \varphi'')$ meets $z$ before $u$, giving back to the previous case as $\varphi''$ is $(F,\varphi')$-stable.

	Assume now that $w_1, \ldots, w_t$ is a near stable rotation, i.e., $\pbar'(w_t)=\alpha+1$.  
If $z\ne w_t$, then we 
	shift  from $w_1$ to $w_t$, 
	and do $ru: \alpha+1\rightarrow \tau$, $uz: \tau\rightarrow \alpha+1$.
	Denote the new coloring by $\varphi''$. As $\varphi''$ is $(F,\varphi')$-stable and so is $(F,\varphi)$-stable,
	we see that  $F^*=(F, rw_t, w_t, rw_{t-1}, w_{t-1}, \ldots, rw_1, w_1)$
	is a multifan that contains more vertices than
	$F$ does,  
	showing a 
	contradiction to the choice of $F$.  
	
Thus we assume that $z=w_t$. Since $\varphi'(rz) \ne \varphi'(uz)=\tau$, we have $t\ge 2$. 
	Note that  $uz\in P_r(1,\tau,\varphi')=P_{w}(1,\tau,\varphi')$ for some vertex $w\in V(G)\setminus(V(F)\cup \{w_1, \ldots, w_t\})$.
	 If  $P_{w}(1,\tau,\varphi')$ meets $u$ before $z$, 
	we do the following operations:
	\[
	\begin{bmatrix}
	P_{[w,u]}(1,\tau,\varphi') & ru & uz\\
	1/\tau & \alpha+1\rightarrow 1& \tau \rightarrow \alpha+1
	\end{bmatrix}.
	\]
	Denote the new coloring by $\varphi''$. Now $(r,rs_1,s_1,\ldots,s_{\alpha})$ is a multifan, but $\pbar''(s_{\alpha})=\pbar''(r)=\alpha+1$, giving a contradiction to Lemma~\ref{thm:vizing-fan1} \eqref{thm:vizing-fan1a}.
	 If  $P_{w}(1,\tau,\varphi')$ meets $z$ before $u$,
	we do the following operations:
	\[
	\begin{bmatrix}
	P_{[w,z]}(1,\tau,\varphi') &  w_1:w_t & rw_t=rz & ru &uz\\
	1/\tau &  \text{shift} & \varphi'(rz)\rightarrow 1 & \alpha+1 \rightarrow \tau & \tau \rightarrow \alpha+1
	\end{bmatrix}.
	\]
	Denote the new coloring by $\varphi''$. Now $(r,rs_1,s_1,\ldots,s_{\alpha})$ is a multifan, but $\pbar''(s_{\alpha})=\pbar''(r)=\alpha+1$, giving a contradiction to Lemma~\ref{thm:vizing-fan1} \eqref{thm:vizing-fan1a}.
	\qed

	By Claim~\ref{u-sharecolor},    $\tau \in \{2, \ldots, \alpha+1, \Delta\}$. 
	Applying Lemmas~\ref{Lem:2-non-adj1} and \ref{Lem:2-non-adj2}, we have the following claim.

	\begin{CLA}\label{u-nonadj}
		Let  $z\in N_{\Delta-1}(u)\setminus V(F)$  and any $(F,\varphi)$-stable $\varphi'\in \CC^\Delta(G-rs_1)$ such that $\varphi'(ru)=\alpha+1$ and $\pbar'(z)=\alpha+1$, and let  $\varphi'(uz)=\tau$.
		Then 	$\tau\in \pbar'(F)\setminus \{1\}$, and 
		$u\not\sim s_1,s_\alpha$ 	if $\tau =\Delta$; and $u\not\sim s_{\tau-1}, s_\tau$ if $\tau \in [2,\alpha+1]$.
	\end{CLA}

	\begin{CLA}\label{pseudo-fan}
		Suppose that $N_{\Delta-1}(r)=N_{\Delta-1}(u)$ for every $u\in N_\Delta(r)$.
		Then for every  $(F,\varphi)$-stable coloring $\varphi'\in \CC^\Delta(G-rs_1)$,  $N_{\Delta-1}[r]$ is $\varphi'$-elementary. In particular, $N_{\Delta-1}[r]$ is the vertex set of 
		 a typical 2-inducing pseudo-multifan  
		with respect to $rs^*$ and  $\varphi^*\in \CC^\Delta(G-rs^*)$  for some $s^*\in N_{\Delta-1}(r)$. 
	\end{CLA}
	
	\proof[Proof of Claim~\ref{pseudo-fan}] 
	Assume to the contrary that there exists an $(F,\varphi)$-stable coloring $\varphi'\in \CC^\Delta(G-rs_1)$ such that $N_{\Delta-1}[r]$ is not  $\varphi'$-elementary.  
	Since $V(F)$ is $\varphi'$-elementary, there exists $z\in N_{\Delta-1}[r]\setminus V(F)$
	such that $\pbar'(z)\in \pbar'(F)$ or there exists $z^*\ne z$ with $z^*\in N_{\Delta-1}[r]\setminus V(F)$
	such that $\pbar'(z)=\pbar'(z^*)$.  Let $\pbar'(z)=\delta$. 
	If $\delta\in \pbar'(F)$, then $z$ and $r$
	are $(1,\delta)$-unlinked,   so we do $(\delta,1)-(1,\alpha+1)$-swaps at $z$; 
	if  $\pbar'(z)=\pbar'(z^*)$, we may assume, without loss of generality, that $z$ and $r$
	are $(1,\delta)$-unlinked, we again do  $(\delta,1)-(1,\alpha+1)$-swaps at $z$. 
	In either case, we find an $(F,\varphi')$-stable coloring $\varphi''\in \CC^\Delta(G-rs_1)$ such that 
	$\pbar''(z)=\alpha+1$.  Since for any $u\in N_\Delta(r)$,  it holds that  $N_{\Delta-1}(r)=N_{\Delta-1}(u)$,
	we can choose $u\in N_\Delta(r)$  such that $\varphi''(ur)=\alpha+1$, where $\alpha+1$ is the last 2-inducing color of $F_{\varphi''}(r,s_1:s_\alpha)$.  Since $N_{\Delta-1}(r)=N_{\Delta-1}(u)$, we have $uz\in E(G)$ and so 
	$L=(F_{\varphi''}(r,s_1:s_\alpha), ru, u, uz, z)$ is a lollipop with respect to $\varphi''$. 
	By Claim~\ref{u-nonadj},  $u$ is not adjacent to at least one vertex in $N_{\Delta-1}(r)$, 
	which in turn shows $N_{\Delta-1}(r)\ne N_{\Delta-1}(u)$, giving a contradiction. 
	
	Therefore, for every $(F,\varphi)$-stable coloring $\varphi'\in \CC^\Delta(G-rs_1)$, it holds that  $N_{\Delta-1}[r]$ is $\varphi'$-elementary.  Consequently, there 
	is  a  pseudo-multifan  with vertex set $N_{\Delta-1}[r]$. 
	By renaming  colors and  vertices from 
	$N_{\Delta-1}(r)$, we can assume the  pseudo-multifan with vertex set $N_{\Delta-1}[r]$ is typical. By Lemma~\ref{2-inducing}, we can further assume that the pseudo-multifan is typical 2-inducing. 
	\qed 
	
	{\bf By Claim~\ref{pseudo-fan}, it suffices to only show Theorem~\ref{Thm:vizing-fan2b} \eqref{common2}}.
	Assume to the contrary that there exists  $u\in N_{\Delta}(r)$ such that 
	$N_{\Delta-1}(u)\setminus N_{\Delta-1}(r)\ne \emptyset$. 
	
	\begin{CLA}\label{x-missing-color}
		For every $z\in N_{\Delta-1}(u)\setminus N_{\Delta-1}(r)$, there is  an $(F,\varphi)$-stable coloring 
		$\varphi'\in \CC^\Delta(G-rs_1)$ such that
		$\varphi'(ru)=\alpha+1$ and  $\pbar'(z)=\alpha+1$. 
	\end{CLA}
	\proof[Proof of Claim~\ref{x-missing-color}]
	%		If for any $u\in N_\Delta(r)$,  it holds that  $N_{\Delta-1}(r)=N_{\Delta-1}(u)$, 
	%		then we are done by Claim~\ref{pseudo-fan}. Thus, we assume instead that 
	%		there exists $u\in N_{\Delta-1}(r)$ such that 
	%		$N_{\Delta-1}(u)\setminus N_{\Delta-1}(r)\ne \emptyset$.  
	%		
	%Let $z\in N_{\Delta-1}(u)\setminus N_{\Delta-1}(r)$. 
	By Claim~\ref{ux-color}, we assume  $\varphi(ru)=\alpha+1$. 
	Let $\pbar(z)=\delta$.  If $\delta=\alpha+1$, we simply let $\varphi'=\varphi$.  So    $\delta\ne \alpha+1$. 
	If $\delta\in \pbar(F)$, 
	we let  $\varphi'$ be obtained from $\varphi$ by doing $(\delta,1)-(1,\alpha+1)$-swaps
	at $z$.   This gives that $\pbar'(z)=\alpha+1$. 
By Lemma~\ref{thm:vizing-fan1} \eqref{thm:vizing-fan1b}, $\varphi'$
	is $(F,\varphi)$-stable and  $\varphi'(ru)=\varphi(ru)=\alpha+1$. Thus $\varphi'$  is a desired coloring. 
	
	Assume now that $\delta\in [\alpha+2,\Delta-1]$. 
	If there is an $(F,\varphi)$-stable $\varphi'' \in \CC^{\Delta}(G-rs_1)$ with $\varphi''(ru)=\alpha+1$
	such that $z\not\in P_r(1,\delta, \varphi'')$ (so $z$ and $r$ are $(1,\delta)$-unlinked), 
	let $\varphi'$ be obtained from $\varphi''$
	by doing $(\delta,1)-(1,\alpha+1)$-swaps
	at $z$. Since 
	$\varphi''(ru)=\alpha+1$ and $r$ and $s_\alpha$
	are $(1,\alpha+1)$-linked with respect to $\varphi''$
	by Lemma~\ref{thm:vizing-fan1} \eqref{thm:vizing-fan1b},  
	it holds that  $\varphi'$ is $(F,\varphi'')$-stable and so $(F,\varphi)$-stable  with
	$\varphi'(ru)=\varphi''(ru)=\alpha+1$. Thus, $\varphi'$ is a desired coloring and 
	we are done.  
	Therefore every  $(F,\varphi)$-stable $\varphi'' \in \CC^{\Delta}(G-rs_1)$ with $\varphi''(ru)=\alpha+1$ satisfies  $z\in P_r(1,\delta, \varphi'')$. 
	Applying Lemma~\ref{Lemma:pseudo-fan0} \eqref{only-fan} with  $z$ playing the role of $w$,  there exists 
	$w_t\in N_{\Delta-1}(r)\setminus V(F)$ such that $\pbar(w_t)=\delta$
	and $w_t$ and $r$ are $(1,\delta)$-linked with respect to $\varphi$. 
	This is a contradiction  by noting $w_t\ne z$,  since $\varphi$ is $(F,\varphi)$-stable but   $z\notin P_r(1,\delta, \varphi)$. 
	\qed 
	
	\begin{CLA}\label{u-outneighbor-x}
		 $|N_{\Delta-1}(u)\setminus N_{\Delta-1}(r) |\ge 2$. 
	\end{CLA}
	\proof[Proof of Claim~\ref{u-outneighbor-x}]
	Let $x\in N_{\Delta-1}(u)\setminus N_{\Delta-1}(r)$. 
	By Claim~\ref{x-missing-color},  we choose an $(F,\varphi)$-stable coloring from 
	$\CC^\Delta(G-rs_1)$ and call it still $\varphi$ such that $\varphi(ru)=\alpha+1$ and $\pbar(x)=\alpha+1$. 
	By Claim~\ref{u-nonadj},  $\varphi(ux)\in  \{2, \ldots, \alpha+1, \Delta\}$. 
	If $|V(F)|\ge 3$, then 
	Claim~\ref{u-nonadj} gives that 
	$|N_{\Delta-1}(u)\setminus N_{\Delta-1}(r) |\ge 2$. 
	Thus we have 	$V(F)=\{r,s_1\}$. Consequently, $\alpha+1=2$, and $\varphi(ux)=\Delta$ by 
	the fact that $\varphi(ux)\in  \{2,  \Delta\}$. 
	We may assume further that 
	$
	N_{\Delta-1}(u)\setminus N_{\Delta-1}(r) =\{x\}.
	$
	
	By Claim~\ref{u-nonadj}, $u\not\sim s_1$. 
	We consider two cases.
	Assume first that there exists an $(F,\varphi)$-stable $\varphi'\in \CC^\Delta(G-rs_1)$ 
	such that $N_{\Delta-1}[r]$ is not $\varphi'$-elementary.  
	By exchanging the roles of $2$ and $\Delta$ if necessary, we may assume 
	$\varphi'(ru)=2$.
	Since $V(F)$ is $\varphi'$-elementary, there exists $z\in N_{\Delta-1}(r)\setminus V(F)$
	such that $\pbar'(z)\in \pbar'(F)$ or there exists $z^*\ne z$ with $z^*\in N_{\Delta-1}(r)\setminus V(F)$
	such that $\pbar'(z)=\pbar'(z^*)$. Let $\pbar'(z)=\delta$. 
	If $\delta\in \pbar'(F)$,   then as $r$ and $z$ are $(1,\delta)$-unlinked, we do $(\delta,1)-(1,2)$-swaps at $z$; 
	if  $\pbar'(z)=\pbar'(z^*)$, we may assume, without loss of generality, that $z$ and $r$
	are $(1,\delta)$-unlinked, we again do  $(\delta,1)-(1,2)$-swaps at $z$. 
	In either case, we find an $(F,\varphi' )$-stable coloring $\varphi''\in \CC^\Delta(G-rs_1)$ with  
	$\varphi''(ru)=\varphi'(ru)=2$
	and $\pbar''(z)=2$. Note that $z\in N_{\Delta-1}(u)$ since  $u\not\sim s_1$, $s_1\ne z$,  and $N_{\Delta-1}(u)\setminus  N_{\Delta-1}(r)=\{x\}$. 
	By	Claim~\ref{u-sharecolor}, 
	$\varphi''(uz)\in \{2, \Delta\}$, which implies $\varphi''(uz)=\Delta$ by noting $\varphi''(ru)=2$.
	Furthermore, we assume $uz\in P_{s_1}(1,\Delta,\varphi'')=P_r(1,\Delta,\varphi'')$ (otherwise, after a $(1,\Delta)$-swap on the chain containing $uz$, we obtain a contradiction to Claim~\ref{u-sharecolor}). 
	Since $\varphi''(ru)=2$ and $\varphi''(uz)=\Delta$, $\varphi''(ux)\ne 2, \Delta$. 
	Thus $\varphi''(ux)\in \{1,3, 4, \ldots, \Delta-1\}$, which 
	implies $\pbar''(x) \ne 2$ by 
	Claim~\ref{u-sharecolor}.  
	Let $\pbar''(x)=\tau$ and $\varphi''(ux)=\lambda$. 
	Note that if $\tau=\Delta$ then $\lambda\ne 1$,  as $uz\in P_{s_1}(1,\Delta,\varphi'')=P_r(1,\Delta,\varphi'')$. 
	% Note that $\lambda \in \pbar''(N_{\Delta-1}[r]\setminus \pbar''(V(F)))$.
	Thus if $\tau=\Delta$ or $1$, we do  $(\tau,1)-(1,2)$-swaps at $x$. 
	As the color of $ux$ is not $\Delta$ after these swaps, we get  a contradiction to Claim~\ref{u-sharecolor}. 
	Thus, we assume that $\tau\in [3,\Delta-1]$, and that $P_x(1,\tau,  \varphi''')=P_r(1,\tau,  \varphi''')$
	for any $(L,\varphi'')$-stable  coloring $\varphi'''$,  where $L=(F,ru, u, uz,z)$ is a lollipop.  Let  $w_1\in N_{\Delta-1}(r)$ such that $\varphi''(rw_1)=\tau$. 
	Applying  Lemma~\ref{Lemma:pseudo-fan0} \eqref{fan-and-x} on $L$ with $x$ playing the role of $w$, 
	we find a sequence of distinct vertices $w_1, \ldots, w_t\in  \{s_{\alpha+1}, \ldots, s_{\Delta-2} \}$ that forms either a stable rotation or a near stable rotation. 
	As $x$ and $r$ are $(1,\tau)$-linked, we conclude that $w_1, \ldots, w_t$ form a near stable rotation and so $\pbar''(w_t)=2$. 
	As $\varphi''(uz)=\Delta$, $\varphi''(ur)=2$, if $w_t\ne z$, then  
	$\varphi''(uw_t)\in \{1, 3, 4, \ldots, \Delta-1 \}$. 
	This gives a contradiction to Claim~\ref{u-sharecolor}. 
	Thus we assume that $w_t=z$. 
	Notice that  $r\in P_{s_1}(2,\tau,\varphi'')$ by the maximality of $|V(F)|$.
	Since 	$r\in P_x(2,\tau,\varphi'')$ by Claim~\ref{u-sharecolor}, we have 
	$r\in P_x(2,\tau,\varphi'')=P_{s_1}(2,\tau,\varphi'')$. 
	So $w_t$ is $(2,\tau)$-unlinked with any of $s_1, x$ and $r$ with respect to $\varphi''$. 
	We do a $(2,\tau)$-swap at $w_t$ and then shift  from $w_1$
	to $w_t$. This gives a coloring such   that $s_1$ and $x$ are $(2,\tau)$-unlinked with respect to the coloring. 
	Again, with respect to the current coloring,  $r$ and $s_1$ are $(2,\tau)$-linked by the maximality of $|V(F)|$.
	We do a $(2,\tau)$-swap at $x$ to get a coloring $\varphi'''$. 
	Note that $\varphi'''(ru)=\varphi''(ru)=2$, $\varphi'''(ux)=\varphi''(ux)=\lambda$,
	$\pbar'''(x)=2$, and $\varphi'''(uz)=\varphi''(uz)=\Delta$.
	Therefore, $\varphi'''(ux)=\lambda\in \{1,3,4,\ldots, \Delta-1\}$, 
	showing  a contradiction to Claim~\ref{u-sharecolor}.

	Thus  we assume that   $N_{\Delta-1}[r]$  is $\varphi'$-elementary for every $(F,\varphi)$-stable $\varphi'\in \CC^\Delta(G-rs_1)$. 
	In particular, $N_{\Delta-1}[r]$  is $\varphi$-elementary, and  as $|V(F)|=2$ and $F$  is maximum at $r$, 
	we know that $N_{\Delta-1}[r]$ 
	is contained in a pseudo-multifan $S=S_{\varphi}(r,s_1:s_1:s_{\Delta-2})$. Let $\delta\in \pbar(S)\setminus \pbar(F)$.
	By Lemma~\ref{pseudo-fan-ele}~\eqref{pseudo-b}, $\pbar^{-1}_S(\delta)$
	is $(2,\delta)$- and $(\delta,\Delta)$-linked with $s_1$ 
	and the corresponding chains contain the vertex $r$ with respect to $\varphi$.  
	Recall that $\varphi(ru)=2, \varphi(ux)=\Delta$, and $\pbar(x)=2$.
	Let $\varphi'$ be obtained from $\varphi$ by doing 
	a
	$(2,\delta)-(\delta, \Delta)-(\Delta,1)-(1,2)$-swap at $x$. 
	Since $\varphi'$ is $(F,\varphi)$-stable, $\varphi'(ru)=2$, $\varphi'(ux)=\delta$, and $\pbar'(x)=2$, we get 
	a contradiction to Claim~\ref{u-sharecolor}. 
	\qed 
	
	\begin{CLA}\label{u-2-out-neighbor}
		Let $x,y\in N_{\Delta-1}(u)\setminus N_{\Delta-1}(r)$ be distinct, and $\varphi'\in \CC^\Delta(G-rs_1)$
		be any $(F,\varphi)$-stable coloring with $\varphi'(ru)=\alpha+1$.  
		Suppose $\pbar'(x)\in \pbar'(F)$ and $\pbar'(x)\ne 1$.  Then $\pbar'(y)\not\in \pbar'(F)$. Furthermore, $y$ and $r$
		are $(1,\pbar'(y))$-linked with respect to $\varphi'$. 
	\end{CLA}
	\proof[Proof of Claim~\ref{u-2-out-neighbor}]
	
	The second  part of the claim  follows easily from the first part. Since otherwise, a $(1,\pbar'(y))$-swap at $y$
	implies that $1$ is missing at $y$, contradicting the first part. 
	%		And if $\pbar'(x)= 1$,  then a $(1,\alpha+1)$-swap at $x$
	%		and a  $(\pbar'(y),1)$-swap at $y$ gives a contradiction to the first part. 

	Assume to the contrary that $\pbar'(x)\in \pbar'(F)$ and $\pbar'(y)\in \pbar'(F)$.  
	We claim that we may assume $\pbar'(x)=\pbar'(y)=\alpha+1$ or 
	$\pbar'(x)=\alpha+1$ and $\pbar'(y)=1$.  
	By doing  $(\pbar'(x),1)-(1,\alpha+1)$-swaps at $x$, 
	we  assume that $\pbar'(x)=\alpha+1$. 
	Since $1,\alpha+1\in \pbar'(F)$, we still 
	have $\pbar'(y)\in \pbar'(F)$. 
	If $\pbar'(y)=\alpha+1$, then we are done. 
	Otherwise, doing a $(1,\pbar'(y))$-swap at $y$ gives a desired coloring.
Let $\varphi'(ux)=\tau$ and $\varphi'(uy)=\lambda$.
	We consider now two cases to finish the proof of Claim~\ref{u-2-out-neighbor}. 
	
\smallskip 
	{\noindent \bf \setword{Case A}{Case A}: $\pbar'(x)=\pbar'(y)=\alpha+1$.}
\smallskip

	By Claim~\ref{u-sharecolor}, $\tau, \lambda\in \pbar'(F)\setminus\{1\}$. 
	Assume, without loss of generality, that $\tau\ne \Delta$. 
	Then $\tau\in \{2,\ldots, \alpha+1\}$ is a 2-inducing color of $F$, since $F$
	is assumed to be typical 2-inducing.  
	By Lemma~\ref{Lemma:extended multifan}~\eqref{Evizingfan-d} 
	that $r\in P_{s_\alpha}(\alpha+1,\Delta)=P_{s_1}(\alpha+1,\Delta)$, 
	we know $\lambda\ne \Delta$. 
	Thus $\lambda\in \{2,\ldots, \alpha+1\}$ is also a 2-inducing color. 
	By symmetry between $x$ and $y$, we assume  $\lambda \prec \tau$. 
	Shift from $s_2$ to $s_{\lambda-1}$, uncolor $rs_\lambda$, 
	then color $rs_1$ by 2.  Denote the resulting coloring by $\varphi''$. 
	Now  $F^*=(r, rs_\lambda, s_\lambda, rs_{\lambda+1}, s_{\lambda+1}, \ldots, rs_\alpha, s_\alpha, rs_{\lambda-1}, s_{\lambda-1}, \ldots, rs_1,s_1)$ is a new  multifan with respect to $\varphi''$ that has the same vertex set 
	as $F_{\varphi'}(r,s_1:s_\alpha)$. In this new multifan $F^*$, 
	$\lambda$ is itself a $\lambda$-inducing color,  $\tau$ is a $(\lambda+1)$-inducing color, and $\alpha+1$ is the last $(\lambda+1)$-inducing color. We can further assume that $F^*$ is typical by relabeling colors and vertices. 
	However, $r\in P_y(\alpha+1, \lambda,\varphi'')$, shows a contradiction to 
	Lemma~\ref{Lemma:extended multifan}~\eqref{Evizingfan-d}  that $r\in P_{s_\alpha}(\alpha+1,\lambda, \varphi'')=P_{s_{\lambda}}(\alpha+1,\lambda,\varphi'')$.  
	
\smallskip 
	{\bf \noindent Case B: $\pbar'(x)=\alpha+1$ and $\pbar'(y)=1$.}
	\smallskip

	We  assume that $x$ and $y$ are $(1,\alpha+1)$-linked with respect to $\varphi'$. For otherwise, a $(1,\alpha+1)$-swap at 
	$y$ reduces the problem to \ref{Case A}.

	We show that $\tau, \lambda \ne \Delta$.  If this is not the case, then  by swapping colors along $P_{[x,y]}(1,\alpha+1,\varphi')$ and exchanging the roles of $x$ and $y$ if necessary,  we assume that 
	$\tau \ne \Delta$ and $\lambda= \Delta$. Let $\varphi''$ be obtained 
	from $\varphi'$ by a $(1,\Delta)$-swap at $y$. 
	By Lemma~\ref{Lemma:extended multifan}~\eqref{Evizingfan-d}, $r\in P_{s_1}(\alpha+1,\Delta, \varphi'')=P_{s_\alpha}(\alpha+1,\Delta,\varphi'')$. 
	Thus, we can do an $(\alpha+1,\Delta)$-swap at $y$ without affecting the coloring of $F_{\varphi''}(r,s_1:s_\alpha)$ and $\varphi''(ru)$. 
	Thus, let $\varphi^*=\varphi''/P_y(\alpha+1,\Delta, \varphi'')$. 
	We see that $P_r(1,\alpha+1,\varphi^*)=ruy$, showing a contradiction to Lemma~\ref{thm:vizing-fan1}~\eqref{thm:vizing-fan1b} that $r$
	and $s_\alpha$ are $(1,\alpha+1)$-linked with respect to $\varphi^*$. 
	
	Since  $\tau, \lambda \ne \Delta$, 
	both $\tau$ and $\lambda$ are 2-inducing colors of $F$ 
	by Claim~\ref{u-sharecolor}.  
	By swapping colors along $P_{[x,y]}(1,\alpha+1,\varphi')$ and exchanging the roles of $x$
	and $y$ if necessary, we assume 
	$  \tau \prec \lambda $.  
	Note that $r\in P_{s_1}(\lambda,\Delta,\varphi')=P_{s_{\lambda-1}}(\lambda,\Delta,\varphi')$  and 
	$r\in P_{s_1}(\alpha+1,\Delta,\varphi')=P_{s_\alpha}(\alpha+1,\Delta,\varphi')$ by Lemma~\ref{Lemma:extended multifan}~\eqref{Evizingfan-c}  and~\eqref{Evizingfan-d}, respectively. 
	Let $\varphi''$ be obtained from $\varphi'$ by doing  a 
	$ (1,\Delta)-(\Delta, \lambda )-(\lambda,1)-(1,\Delta)-(\Delta,\alpha+1)$-swap at $y$.
	Note that $\varphi''$ is $(F,\varphi')$-stable, and 
	that $P_r(1,\alpha+1,\varphi'')=ruy$, showing a contradiction to Lemma~\ref{thm:vizing-fan1}~\eqref{thm:vizing-fan1b} that $r$
	and $s_\alpha$ are $(1,\alpha+1)$-linked with respect to $\varphi''$. 
	\qed 
	
	By  Claim~\ref{x-missing-color} and Claim~\ref{u-outneighbor-x},  we let 
	$x,y\in N_{\Delta-1}(u)\setminus N_{\Delta-1}(r)$  with $x\ne y$, 
	and assume that 
	$\varphi(ru)=\alpha+1$  and $\pbar(x)=\alpha+1$. 
	By Claim~\ref{u-2-out-neighbor}, 
	we also assume that $\pbar(y)=\delta \in [\alpha+2,\Delta-1]$
	and $y$ and $r$ are $(1,\delta)$-linked with respect to such a coloring $\varphi$. 
	Let $w_1\in N_{\Delta-1}(r)$ such that 
	$\varphi(rw_1)=\delta$  and $L=(F,ru,u, ux,x)$. 
	By Claim~\ref{u-2-out-neighbor}, 
	for any $L$-stable $\varphi' \in \CC^{\Delta}(G-rs_1)$,   it holds that 
	$y\in P_r(1,\delta,\varphi')$. 
	Applying Lemma~\ref{Lemma:pseudo-fan0}~\eqref{fan-and-x} 
	on $L$ with $y$ playing the role of $w$,
we find a sequence of distinct vertices $w_1, \ldots, w_t\in  \{s_{\alpha+1}, \ldots, s_{\Delta-2} \}$ that forms either a stable rotation or a near stable rotation. 
Since $y$ and $r$ are $(1,\delta)$-linked with respect to $\varphi$, $w_1, \ldots, w_t$
is a near stable rotation, i.e.,  $\pbar(w_t)=\alpha+1$.

	\begin{CLA}\label{u-3-neighbor}
		$|N_{\Delta-1}(u)\setminus N_{\Delta-1}(r) |\ge 3$. 
	\end{CLA}
	\proof[Proof of Claim~\ref{u-3-neighbor}]
		Let $\varphi(ux)=\tau$ and $\varphi(uy)=\lambda$.  Since 
	$\varphi(ru)=\alpha+1$, we have $\alpha+1\notin \{\tau,\lambda,\delta\}$. 
	By Claim~\ref{u-nonadj}, $\tau\in \pbar(F)\setminus\{1\}$, and 
	\begin{numcases}{}
	s_{1}, s_{\alpha} \not\in N_{\Delta-1}(u) & \text{if $\tau=\Delta$}, \label{2-small-neighborsa}\\
	s_{\tau-1}, s_{\tau} \not\in N_{\Delta-1}(u) & \text{if $\tau\ne \Delta$}. \label{2-small-neighborsb}
	\end{numcases}
	
	We  then show that 
	\begin{numcases}{}
	s_{1}, s_{\alpha} \not\in N_{\Delta-1}(u) & \text{if $\lambda=\Delta$}, \label{2-small-neighbors2a}\\
	s_{\lambda-1}, s_{\lambda} \not\in N_{\Delta-1}(u) & \text{if $\lambda\ne \Delta$}. \label{2-small-neighbors2b}
	\end{numcases}

	To see this, let $\varphi'$ be obtained from $\varphi$
	by first doing a $(1,\alpha+1)$-swap at both $x$ and $w_t$, 
	and then  shift from $w_1$ to $w_t$. 
	Now, $\pbar'(r)=\delta$ and $\varphi'(ux)=\varphi(ux)=\tau$. 
	Let $\varphi''=\varphi'/P_y(\alpha+1,\delta,  \varphi')$. 
	Note that $\varphi''(ux)=\varphi'(ux)=\tau$ and $\varphi''(uy)=\varphi'(uy)=\lambda$.  Applying Claim~\ref{u-sharecolor} to the coloring $\varphi''$, we get  
	$\varphi''(uy)=\lambda\in \pbar''(F)\setminus\{\delta\}$.  
	As $\tau,\lambda,\delta,\alpha+1 \in \pbar''(F)$ and they are all distinct, 
	$|V(F)|\ge |\{\delta,\tau,\lambda,\alpha+1\}|-1 = 3$.  Then~\eqref{2-small-neighbors2a} and~\eqref{2-small-neighbors2b} follow   from  Claim~\ref{u-nonadj}. 
	These two facts, 
	together with~\eqref{2-small-neighborsa} and~\eqref{2-small-neighborsb},
	imply 
	$
	\text{either} s_1,s_\alpha, s_{\lambda-1}, s_{\lambda} \not\in N_{\Delta-1}(u),  \text{or}  s_1,s_\alpha, s_{\tau-1}, s_{\tau} \not\in N_{\Delta-1}(u).
	$
	Note that $s_1\ne s_\alpha$ by $|V(F)|\ge 3$. 
	We obtain $
	|N_{\Delta-1}(u)\setminus N_{\Delta-1}(r)|\ge  3 
	$	from the above  unless 
either ${\lambda}={\alpha}=2$ or ${\tau}={\alpha}=2$.

	Therefore we assume  $\alpha=2$ and $\{\lambda,\tau\}=\{2,\Delta\}$.  Furthermore, we may assume that  $|N_{\Delta-1}(u)\setminus N_{\Delta-1}(r)|=2$, 
	 since~\eqref{2-small-neighborsa} to~\eqref{2-small-neighbors2b}
	imply that  $s_1, s_2\not\in N_{\Delta-1}(u)$. Therefore 
	$N_{\Delta-1}(r)\setminus\{s_1,s_2\}\subseteq N_{\Delta-1}(u)$. 
	In particular, $w_t\in N_{\Delta-1}(u)$. Since $r$
	and $s_\alpha$ are $(1,\alpha+1)$-linked with respect to $\varphi$
	and $\pbar(w_t)=\alpha+1$, it follows that $\varphi(uw_t)\ne 1$. This, together with the facts that  $\pbar(F)=\{1,2,3,\Delta\}$, $\varphi(ru)=3$, and  $\{\lambda,\tau\}=\{2,\Delta\}$,  implies  that $\varphi(uw_t)\in [4,\Delta-1]$, 
	showing a contradiction to Claim~\ref{u-sharecolor}. 
	\qed 
	
	By Claim~\ref{u-3-neighbor}, 
	let $z\in N_{\Delta-1}(u)\setminus N_{\Delta-1}(r)$ with $z\ne x,y$. 
	By Claim~\ref{u-2-out-neighbor},  we assume   $\pbar(z)=\lambda \in [\alpha+2,\Delta-1]$,
	  and $z$ and $r$ are $(1,\lambda)$-linked with respect to $\varphi$.  Since  $\pbar(y)=\delta$, and also $y$ and $r$ are $(1,\delta)$-linked with respect to $\varphi$, we have  $\lambda\ne \delta$.

	Recall $w_1, \ldots, w_t$ is a near stable rotation at $r$ with $\varphi(rw_1)=\delta=:\delta_1$. Let $\pbar(w_i)=\delta_{i+1}$ for each $i\in [1,t-1]$.  
	As $z$ and $r$ are $(1,\lambda)$-linked with respect to $\varphi$ and $w_i$ and $r$ are $(1,\delta_{i+1})$-linked for each $i\in [1,t-1]$, $\lambda \ne \delta_i$ for each $i\in [2,t]$. 
	Let $\lambda_1=\lambda$ and $w_1^*$ be the neighbor of $r$
	such that $\varphi(rw_1^*)=\lambda_1$. 
	For any $(L,\varphi)$-stable coloring 
	$\varphi'\in \CC^\Delta(G-rs_1)$, $z\in P_r(1,\lambda,\varphi')$. 
	Applying Lemma~\ref{Lemma:pseudo-fan0}~\eqref{fan-and-x} on $L=(F,ru, u,ux,x)$ and $z$,
	we find a sequence of distinct vertices $w^*_1, \ldots, w^*_{k}\in  \{s_{\alpha+1}, \ldots, s_{\Delta-2} \}$ that forms either a stable rotation or a near stable rotation.
	If $\pbar(w_k^*)=\lambda_1$, then since $w_k^*$ and $r$ are $(1,\lambda_1)$-linked, 
	a $(1,\lambda_1)$-swap at $z$ gives a contradiction to Claim~\ref{u-2-out-neighbor}. 
	Thus  $\pbar(w_k^*)=\alpha+1$. Let  $\pbar(w^*_i)=\lambda_{i+1}$ for each $i\in [1,k-1]$.
	
	Recall that  $w_1^*\ne w_i$ for each $i\in[1,t]$. 
	Furthermore, as  $w_i$ and $r$ are $(1,\delta_{i+1})$-linked for each $i\in [1,t-1]$ and  
	$w_j^*$ and $r$ are $(1,\lambda_{j+1})$-linked for each $j\in [1,k-1]$, $w_1^*\ne w_i$
	for each $i\in [1,t]$ implies that $\lambda_2 \not\in\{\delta_1,\ldots, \delta_t\}$.
	Consequently, $w^*_2\ne w_i$ for each $i\in[1,t]$. Repeating the same process,
	we get  $w_j^*\ne w_i$ for each $j\in [1,k]$ and each $i\in[1,t]$.

	We claim that $w_t$ and $x$ are $(1,\alpha+1)$-linked with respect to $\varphi$. 
	For otherwise, first doing a $(1,\alpha+1)$-swap at $w_t$, then shift from $w_1$
	to $w_t$ gives a coloring $\varphi'$  such that $\varphi'(ru)=\varphi(ru)=\alpha+1$, $\pbar'(y)=\pbar'(r)=\delta_1$, while $\pbar'(x)=\alpha+1$.
	Based on $\varphi'$, after doing a $(1,\delta_1)$-swap on all $(1,\delta_1)$-chains in $G-rs_1$, we obtain an $(F,\varphi)$-stable coloring $\varphi''$. However,   $\pbar''(x)=\alpha+1$ and $\pbar''(y)=1$, showing    
	a contradiction to Claim~\ref{u-2-out-neighbor}. 
As  $w_t$ and $x$ are $(1,\alpha+1)$-linked, we do a sequence of Kempe changes around $r$ from $w_k^*$ to $w_1^*$ as below:
	let $\varphi_0=\varphi$ and $\lambda_{k+1}=\alpha+1$, 
	$$\varphi_j=\varphi_{j-1}/P_{w^*_{k-(j-1)}}(1,\lambda_{k+1-(j-1)}, \varphi_{j-1}) \quad \text{for each $j\in [1,k]$}.$$
	Note that $$P_{r}(1,\lambda_{k-(j-1)},\varphi_j)=rw^*_{k-(j-1)} \quad \text{for each $j\in [1,k]$}, $$
and that $\varphi_k$ is $(F,\varphi)$-stable, $\varphi_k(ru)=\varphi(ru)$,
	$\varphi_k(ux)=\varphi(ux)$,  and $\pbar_k(x)=\pbar(x)=\alpha+1$, 
	but $z$ and $r$ are $(1,\lambda)$-unlinked with respect to $\varphi_k$. 
	Now doing a  $(1,\lambda)$-swap at $z$ gives a contradiction to Claim~\ref{u-2-out-neighbor}.
	This finishes the proof of Theorem~\ref{Thm:vizing-fan}. 
\end{proof}

\section{Proof of Theorem~\ref{Thm:adj_small_vertex}}

\begin{THM2}
	If $G$ is an HZ-graph with maximum degree $\Delta\ge 4$,  then for any two  adjacent vertices $x, y\in V_{\Delta-1}$, $N_\Delta(x)=N_\Delta(y)$.
\end{THM2}
%\adjsmallvertex*
\begin{proof}
	Assume to the contrary that  $N_\Delta(x)\ne N_\Delta(y)$. Then there exists a vertex $r\in N_\Delta(x)\setminus N_\Delta(y)$. 
	Equivalently,  $x\in N_{\Delta-1}(r)$ and  $y\not\in N_{\Delta-1}(r)$. 
	By Theorem~\ref{Thm:vizing-fan2b} \eqref{ele2},  let $s_1\in N_{\Delta-1}(r)$
	and $\varphi\in \CC^\Delta(G-rs_1)$, and  $F=F_\varphi(r,s_1: s_\alpha)$ be the typical 2-inducing multifan 
	such that either $V(F)=N_{\Delta-1}[r]$ or  $F$ is contained in
	a pseudo-multifan  $S$ with   $V(S)=N_{\Delta-1}[r]$. 
	Let $ N_{\Delta-1}(r)=\{s_1,\ldots, s_{\Delta-2}\}$.
	We consider two cases according to  if $x\in V(F)$
	to finish the proof.

	Assume first that  $x\notin V(F)$. This implies that $V(F)\ne N_{\Delta-1}[r]$. 
	Applying  Theorem~\ref{Thm:vizing-fan2b} \eqref{ele2}, it then follows that  
	$N_{\Delta-1}[r]$ is  the vertex set of a typical 2-inducing pseudo-multifan.  Let $\pbar(x)=\delta$ and $\pbar(y)=\lambda$. Since $V(S)=N_{\Delta-1}[r]$ is $\varphi$-elementary, $ \delta,\lambda\in \pbar(S)$. 
	By Lemma~\ref{thm:vizing-fan1} \eqref{thm:vizing-fan1b}
	or Lemma~\ref{pseudo-fan-ele} \eqref{pseudo-a1}, we know
	that $\pbar^{-1}_S(\lambda)$ and $r$ are $(1,\lambda)$-linked and $x$ and $r$ are $(1,\delta)$-linked. 
	By doing a $(\lambda,1)-(1,\delta)$-swap at $y$, we find $(S,\varphi)$-stable $\varphi'\in \CC^\Delta(G-rs_1)$ such that $\pbar'(y)=\delta$. 
	Let $\varphi'(xy)=\tau$.  Then $P_x(\delta,\tau,\varphi')=xy$, showing a contradiction to 
	Lemma~\ref{pseudo-fan-ele}~\eqref{pseudo-b} or \eqref{pseudo-c} depending on $\tau \in \pbar'(F)$ or $\tau \in \pbar'(S)\setminus \pbar'(F)$.

	Assume then that $x\in V(F)$. We claim that we may assume $x=s_1$. 
	Let $x=s_i$ for some $i\in[1,\alpha]$, and $\varphi'$ be 
	obtained from $\varphi$  by shift from $s_2$ to $s_{i-1}$, uncoloring $rs_i$, 
	and coloring $rs_1$ by 2.   
	The sequence 
	$F^*=(r,rs_i, s_i,rs_{i+1}, s_{i+1}, \ldots, rs_\alpha, s_\alpha, rs_{i-1}, s_{i-1}, \ldots, rs_1,s_1)$ is  a multifan with respect to $\varphi'$.    Since 
	the shift and ``changing'' the uncolored
	edge operation like above is revertible, and   $V(S)$ and $\pbar(S)$
	are kept unchanged under such an operation,  
 we conclude that $N_{\Delta-1}[r]$ is still  the vertex set of a pseudo-multifan.
	By permuting the name of 
	colors and the label of the vertices in $N_{\Delta-1}(r)$, 
	we may assume that  $x=s_1$. 
	Still denote the current coloring by $\varphi$, the multifan by $F$,
	and the pseudo-multifan by $S$.

	 By doing a $(1,\pbar(y))$-swap at $y$, 
	we assume  $\pbar(y)=1$. 
	Let 	$\varphi(s_1y)=\tau$.  By exchanging the roles of the color 2 and $\Delta$ 
	if necessary, we may assume that 
	$\varphi(s_1y)$ is either a 2-inducing color of $F$ or is a color from $\pbar(S)\setminus \pbar(F)$. Let  $\varphi'=\varphi/P_y(1,\Delta,\varphi)$. 
	Now $P_{s_1}(\tau,\Delta, \varphi')=s_1y$.
	This gives a contradiction to Lemma~\ref{thm:vizing-fan2}~\eqref{thm:vizing-fan2-b}
	that $s_1$ and $\pbar'^{-1}_F(\tau)$  are  $(\tau, \Delta)$-linked if  $\tau$ is 2-inducing, 
	and gives a contradiction to Lemma~\ref{pseudo-fan-ele}~\eqref{pseudo-b} that $s_1$ and $\pbar'^{-1}_S(\tau)$
	are $(\tau,\Delta)$-linked if 
	$\tau \in \pbar(S)\setminus \pbar(F)$. 
\end{proof}

\section{Proof of Theorem~\ref{Thm:nonadj_Delta_vertex}}

\begin{THM3}
	%\label{Thm:nonadj_Delta_vertex}
	Let $G$ be an HZ-graph with maximum degree $\Delta\ge 7$ and $u, r\in V_{\Delta}$.
	If $N_{\Delta-1}(u)\ne N_{\Delta-1}(r)$ and $N_{\Delta-1}(u)\cap  N_{\Delta-1}(r)\ne \emptyset$,
	then $|N_{\Delta-1}(u)\cap  N_{\Delta-1}(r)|=\Delta-3$, i.e. $|N_{\Delta-1}(u)\setminus N_{\Delta-1}(r)|=|N_{\Delta-1}(r)\setminus  N_{\Delta-1}(u)|=1$. 
\end{THM3}	

%\nonadjDeltavertex*
\begin{proof}
	Assume to the contrary that there exist $u,r\in N_{\Delta}$
	such that $1\le |N_{\Delta-1}(r)\cap N_{\Delta-1}(u)|\le \Delta-4$. 
	By Theorem~\ref{Thm:vizing-fan2b} \eqref{ele2}, there exist $s_1\in N_{\Delta-1}(r)$
	and $\varphi\in \CC^\Delta(G-rs_1)$ such that $N_{\Delta-1}[r]$
	is  the vertex set of a   typical  2-inducing pseudo-multifan. By this assumption of being typical, 
	we have $ N_{\Delta-1}(r)=\{s_1,\ldots, s_{\Delta-2}\}$, 
	$\pbar(r)=1$, and $\pbar(s_1)=\{2,\Delta\}$. 
	Let 
	$x,y\in N_{\Delta-1}(u)\setminus N_{\Delta-1}(r)$ be two distinct vertices, and  
	  $S:=S_\varphi(r, s_1:s_\alpha: s_{\Delta-2})$ be this pseudo-multifan with $F_\varphi(r,s_1:s_\alpha)$ being the typical 2-inducing multifan  contained in $S$. Since $V(S)=N_{\Delta-1}[r]$ and $V(S)$
	is $\varphi$-elementary, it follows that $\pbar(S)=[1,\Delta]$. We consider two cases.

	\smallskip
	\noindent{\bf Case 1}: $V(S) \ne V(F)$. 
	\smallskip

	In this case, we will repeatedly apply Lemma~\ref{pseudo-fan-ele}~\eqref{pseudo-a1}, \eqref{pseudo-b} or \eqref{pseudo-c}.  
	Assume first that for each $i\in [1,\alpha]$,  $s_i\notin N_{\Delta-1}(u)\cap N_{\Delta-1}(r)$.
	Then by $N_{\Delta-1}(r)\cap N_{\Delta-1}(u)\ne \emptyset$,  there exists $w_1\in \{s_{\alpha+1}, \ldots, s_{\Delta-2}\}$
	such that $w_1\in N_{\Delta-1}(u)\cap N_{\Delta-1}(r)$. 
	Let $\varphi(rw_1)=\delta_1$ and $ \pbar(w_1)=\delta_2$.
	Note that $\delta_1,\delta_2\in \pbar(S)\setminus \pbar(F)$.  
	We claim  that we may assume $\varphi(ux)=\delta_2$. Otherwise, let $\varphi(ux)=\delta^*\ne \delta_2$. By Lemma~\ref{pseudo-fan-ele}~\eqref{pseudo-a1}, \eqref{pseudo-b} or \eqref{pseudo-c} depending on what $\pbar(x)$ is, we can do a $(\pbar(x),\delta_2)-(\delta_2, \delta^*)$-swap 
at $x$ in getting an $(S,\varphi)$-stable coloring, still call it $\varphi$ such that $\varphi(ux)=\delta_2$. 
	Let $\varphi(w_1u)=\tau$
	and  $\varphi'$ be obtained from $\varphi$
	by doing a $(\pbar(x),\delta_1)-(\delta_1,\tau)$-swap at 
	$x$. 
By Lemma~\ref{pseudo-fan-ele}~\eqref{pseudo-a1}, \eqref{pseudo-b} or \eqref{pseudo-c}, $\varphi'$ is $(S,\varphi)$-stable 
such that $\varphi'(w_1u)=\varphi(w_1u)=\tau$ and $\pbar'(x)=\tau$. 
	However,  $P_{w_1}(\delta_2, \tau, \varphi')=w_1ux=P_x(\delta_2, \tau, \varphi')$, showing a contradiction to 
	Lemma~\ref{pseudo-fan-ele}~\eqref{pseudo-a1}, \eqref{pseudo-b} or \eqref{pseudo-c} (depending on if $\tau=1$, $\tau \in \pbar(F)\setminus \{1\}$ or $\tau \in \pbar(S)\setminus \pbar(F)$)
	that $w_1$ and $\pbar^{-1}_S(\tau)$ are $(\delta_2,\tau)$-linked with respect to $\varphi'$.

	Assume now that  there exists $s_i\in N_{\Delta-1}(u)\cap N_{\Delta-1}(r)$ for some $i\in [1,\alpha]$. 
		By shift from $s_2$
	to $s_{i-1}$, uncoloring  $rs_i$, and coloring  $rs_1$ by 2, 
	we obtain a new multifan $F^*=(r,rs_i,s_i, rs_{i+1}, s_{i+1}, \ldots, rs_\alpha, s_\alpha, rs_{i-1}, s_{i-1}, \ldots, rs_1,s_1)$. 
	By  permuting the name of 
	colors and the label of the vertices in $N_{\Delta-1}(r)$ such that $i+1$ is permuted to $2$
	and $s_i$ is renamed as $s_1$, 
	we  assume that  $s_1\in N_{\Delta-1}(u)\cap N_{\Delta-1}(r)$ and $F^*$ is a  typical multifan. 
	
	Recall that $x\in N_{\Delta-1}(u)\setminus N_{\Delta-1}(r)$.
	Let $\pbar(x)=\lambda$.  By Lemma~\ref{thm:vizing-fan1} \eqref{thm:vizing-fan1b}
	or Lemma~\ref{pseudo-fan-ele} \eqref{pseudo-a1}, we know
	that $\pbar^{-1}_S(\lambda)$ and $r$ are $(1,\lambda)$-linked. 
	By doing a $(1,\lambda)$-swap at $x$ if necessary, we assume 
	$\pbar(x)=1$.  By exchanging the roles of  the colors 2 and $\Delta$, we 
	assume that $\varphi(s_1u)$  equals 1, or is a 2-inducing color of $F$, or is  a color from $\pbar(S)\setminus \pbar(F)$. Note that by Lemma~\ref{pseudo-fan-ele}~\eqref{pseudo-b}, 
	for a color $\delta \in \pbar(S)\setminus \pbar(F)$, and for any color $\tau\in \pbar(F)$, 
	$\pbar_S^{-1}(\delta)$ and $\pbar_S^{-1}(\tau)$
	are $(\delta, \tau)$-linked and $r\in P_{\pbar_S^{-1}(\delta)}(\delta,\tau,\varphi)$.
	
	Let $\varphi(ux)=\tau$. If $\tau$ is a 2-inducing color of $F$ or  is from $\pbar(S)\setminus \pbar(F)$, we  do  $(1,\Delta)-(\Delta,\tau)-(\tau,1)$-swaps at $x$. If $\tau$ is a $\Delta$-inducing color of $F$, let $\delta\in \pbar(S)\setminus \pbar(F)$,   we  do  $(1,\delta)-(\delta,\tau)-(\tau,1)-(1,\Delta)-(\Delta,\delta)-(\delta,1)$-swaps at $x$. In both cases, 
	we let $\varphi'$ be the resulting  coloring. We have  $\varphi'(ux)=\Delta$ 
	and $\pbar'(x)=1$. 
	Since $\varphi(s_1u) \ne \Delta, \tau$,  still  $\varphi'(s_1u)$ 
	equals 1, or is a 2-inducing color of $F$,  or is from $\pbar(S)\setminus \pbar(F)$. 
	
	%Recall that $\varphi'(ux)=\Delta$ and $\pbar'(x)=1$.  
	Let $\varphi'(s_1u)=\gamma$. 
	Since $s_1$ and $r$ are $(1,\Delta)$-linked with respect to $\varphi'$, 
	$\gamma\ne 1$.  Thus,   $\gamma$  is a 2-inducing color of $F$,  or is from $\pbar(S)\setminus \pbar(F)$.  By Lemma~\ref{thm:vizing-fan2} \eqref{thm:vizing-fan2-a} or Lemma~\ref{pseudo-fan-ele}~\eqref{pseudo-b},  $u\in P_x(1,\gamma,\varphi')$ (otherwise, $s_1$ and $x$ are $(\gamma,\Delta)$-linked after a $(1,\gamma)$-swap at $x$).  Let $\varphi''=\varphi'/P_x(1,\gamma,\varphi')$.
	 Now  $\varphi''(s_1u)=1$, $\pbar''(x)=\gamma$, and $K=(r, rs_1,s_1, s_1u,  u, ux, x)$ is a Kierstead path with respect to $rs_1$ and $\varphi''$. Let $\delta\in \pbar''(S)\setminus \pbar''(F)$.  If $\gamma \in \pbar''(S)\setminus \pbar''(F)$,  we do nothing. Otherwise, 
	we do a $(\gamma,\delta)$-swap at $x$ (by Lemma~\ref{pseudo-fan-ele}~\eqref{pseudo-b}, this swap does not end at any vertex of $S$).  Denote by $\varphi'''$ the resulting coloring. 
	Since $d_G(s_1)=\Delta-1$,  in both cases, 
	by Lemma~\ref{Lemma:kierstead path1}~(b), $x$ and $s_1$ are  $(2,\pbar'''(x))$-linked.
	Since $\pbar'''(x) \in \pbar'''(S)\setminus \pbar''(F)$,  
	we achieve a contradiction to Lemma~\ref{pseudo-fan-ele}~\eqref{pseudo-b}.

	\smallskip 
	\noindent{\bf Case 2}: $V(S)=V(F)$. 
	\smallskip 
	
	We claim that we may choose $s_1$ such that  $s_1\in N_{\Delta-1}(u)\cap N_{\Delta-1}(r)$. 
	If $s_1\in N_{\Delta-1}(u)\cap N_{\Delta-1}(r)$, 
	then we are done. Otherwise, let $s_i\in N_{\Delta-1}(u)\cap N_{\Delta-1}(r)$. 
	We shift from $s_2$
	to $s_{i-1}$,  uncolor  $rs_i$ and color  $rs_1$ by 2. By relabeling 
	colors and vertices, 
	we may assume that  $s_1\in N_{\Delta-1}(u)\cap N_{\Delta-1}(r)$ and $F^*=(r, rs_i, s_i, rs_{i+1}, s_{i+1}, \ldots, rs_\alpha, s_\alpha, rs_{i-1}, s_{i-1}, \ldots, rs_1, s_1)$ is a  typical multifan.  
	We  let $F_\varphi(r,s_1: s_\alpha:s_{\Delta-2})$ be such a typical multifan.
	
	For a coloring $\psi\in \CC^\Delta(G-rs_1)$, if $\pbar(s_1)=\overline{\psi}(s_1)$, $\pbar(F)=\overline{\psi}(F)$, 
	and some permutation of $F$ is still a multifan with respect to  $rs_1$ and 
	$\psi$, we call $\psi$ a \emph{near $(F,\varphi)$-stable} coloring.  As only colors in $\pbar(s_1)$ will be essential for the proof, we   will not distinguish between $\varphi$ and any near $(F,\varphi)$-stable coloring.  As the vertex set of all the resulting multifans 
	 is always $N_{\Delta-1}[r]$,  for a color $\alpha\in [1,\Delta]$, we use $\overline{\psi}^{-1}(\alpha)$
	 to denote the vertex from  $N_{\Delta-1}[r]$ that misses $\alpha$ with respect to $\psi$.

Let $\psi\in \CC^\Delta(G-rs_1)$ be  near $(F,\varphi)$-stable  and $F^*$ be the corresponding multifan.  	
	The following two facts will be used frequently in the proof without mentioning.  
\begin{enumerate}
	\item[Fact 1]For any $i\in [2,\Delta]$,  since $r$ and $\overline{\psi}^{-1}(i)$ are $(1,i)$-linked by Lemma~\ref{thm:vizing-fan1}~\eqref{thm:vizing-fan1b},  	doing a $(1,i)$-swap at vertices outside of $V(F^*)$ gives an $(F^*,\psi)$-stable  and so a near
	$(F,\varphi)$-stable coloring.  
	\item[Fact 2] For any $2$-inducing color $\tau$ 
	and $\Delta$-inducing color  $\delta$  of $F^*$,   $\overline{\psi}^{-1}(\tau)$ and $\overline{\psi}^{-1}(\delta)$ are $(\tau,\delta)$-linked by Lemma~\ref{thm:vizing-fan2}~\eqref{thm:vizing-fan2-a}. Thus doing a  $(\tau,\delta)$-swap at a 
	vertex outside of $V(F^*)$ or, when $\tau \ne 2$ and $\delta \ne \Delta$, doing a  $(\tau,\delta)$-swap at $\overline{\psi}^{-1}(\tau)$ 
	if $r\not\in P_{\overline{\psi}^{-1}(\tau)}(\tau,\delta,\psi)$  gives a near $(F^*,\psi)$-stable and so a near
	$(F,\varphi)$-stable coloring.  
\end{enumerate}
We denote by $S(u;s_1, x,y)$ the star subgraph of $G$
that is centered at $u$ consisting of edges $us_1, ux$, and $uy$.
Recall that $x,y\in N_{\Delta-1}(u)\setminus N_{\Delta-1}(r)$ are distinct vertices.

	\begin{CLA}\label{x-y-missing-colors}
		We may assume that $\pbar(x)=2$ and $\pbar(y)=\Delta$ 
		or $\pbar(x)=\pbar(y)=\Delta$. 
	\end{CLA}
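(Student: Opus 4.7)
The plan is to reduce $\pbar(x)$ and $\pbar(y)$ to the pair $\{2,\Delta\}$ by Kempe changes that keep the coloring near $(F,\varphi)$-stable, and then to eliminate the remaining bad configuration $\pbar(x)=\pbar(y)=2$ by a single $(2,\Delta)$-swap at $x$. Since $V(S)=V(F)$ and $V(F)$ is $\varphi$-elementary with $\pbar(F)=[1,\Delta]$, each of $x,y$ misses a unique color in $[1,\Delta]$, and Facts 1 and 2 will certify that each proposed swap preserves near $(F,\varphi)$-stability.

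For each $v\in\{x,y\}$ we first split according to $\pbar(v)$: if $\pbar(v)\in\{2,\Delta\}$, do nothing; if $\pbar(v)=1$, perform a $(1,\Delta)$-swap at $v$ to obtain $\pbar(v)=\Delta$, justified by Fact 1; if $\pbar(v)=\gamma$ is a $2$-inducing color with $\gamma\ne 2$, perform a $(\gamma,\Delta)$-swap at $v$ to obtain $\pbar(v)=\Delta$, justified by Fact 2; and if $\pbar(v)=\delta$ is a $\Delta$-inducing color with $\delta\ne\Delta$, perform a $(2,\delta)$-swap at $v$ to obtain $\pbar(v)=2$, again by Fact 2. Every such swap is rooted outside $V(F)$, so neither the multifan edges nor the colors missed inside $V(F)$ are altered.

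The subtle issue is that a swap performed for one of $x,y$ can link to the other and flip its missing color back out of $\{2,\Delta\}$: for instance a $(\gamma,\Delta)$-swap at $y$ with $\pbar(x)=\Delta$ may leave $\pbar(x)=\gamma$, a $(1,\Delta)$-swap at $y$ may leave $\pbar(x)=1$, and a $(2,\delta)$-swap at $y$ with $\pbar(x)=2$ may leave $\pbar(x)=\delta$. Each such interaction is repaired by a short two-swap sequence through color $1$: a $(1,\gamma)$- then $(1,2)$-swap at $x$ restores $\pbar(x)=2$ in the first case, a single $(1,2)$-swap suffices in the second, and a $(1,\delta)$- then $(1,\Delta)$-swap at $x$ handles the third. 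These repair swaps avoid $V(F)$ because each chain $P_r(1,\alpha)$ ends at $\overline{\varphi}^{-1}(\alpha)\in V(F)$, which is neither $x$ nor $y$, and they leave the already-fixed value of $\pbar(y)$ intact because $\pbar(y)$ does not belong to the swapped pair.

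After stage one we have $\pbar(x),\pbar(y)\in\{2,\Delta\}$; if $\{\pbar(x),\pbar(y)\}=\{2,\Delta\}$ or $\pbar(x)=\pbar(y)=\Delta$, the claim follows, possibly after swapping the labels of $x$ and $y$. The only remaining case is $\pbar(x)=\pbar(y)=2$, which we resolve by a single $(2,\Delta)$-swap at $x$. The decisive observation is that $s_1$ misses both $2$ and $\Delta$, hence is incident to no edge of either color and therefore lies on no $(2,\Delta)$-chain of $G-rs_1$; by Fact 2 the swap yields a near $(F,\varphi)$-stable coloring. If $P_x(2,\Delta)$ terminates at $y$, the swap produces $\pbar(x)=\pbar(y)=\Delta$; otherwise only $\pbar(x)$ changes, giving $\pbar(x)=\Delta$ and $\pbar(y)=2$, which yields the first alternative after swapping the roles of $x$ and $y$. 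The interaction-repair bookkeeping in stage one is the principal obstacle we anticipate, but it should be routine once one exploits that each relevant color chain has an endpoint pinned inside $V(F)$ by the $\varphi$-elementary structure of $F$.
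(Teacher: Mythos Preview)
Your argument is correct, but it is considerably more elaborate than the paper's. The paper routes every swap through the color~$1$, using only Fact~1: first do $(\pbar(x),1)$--$(1,2)$ at $x$ to force $\pbar'(x)=2$, then read off $\lambda:=\pbar'(y)$; if $\lambda=2$ do $(2,1)$--$(1,\Delta)$ at \emph{both} $x$ and $y$ to land in the case $\pbar(x)=\pbar(y)=\Delta$, and if $\lambda\ne 2$ do $(\lambda,1)$--$(1,\Delta)$ at $y$ alone, which leaves $\pbar(x)=2$ untouched since $2\notin\{\lambda,1,\Delta\}$. Because every step is a $(1,*)$-swap at a vertex outside $V(F)$, the result is genuinely $(F,\varphi)$-stable, not merely near-stable.

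Your route instead pushes each missing color directly toward $\{2,\Delta\}$ via Fact~2 swaps, which forces you to manage $x$--$y$ interactions and to resolve the residual case $\pbar(x)=\pbar(y)=2$ with a $(2,\Delta)$-swap. This all works, but one sentence is slightly overstated: a Fact~2 swap (such as $(\gamma,\Delta)$ at $v\notin V(F)$) can pass through $r$ and recolor multifan edges, so it is not true that ``neither the multifan edges nor the colors missed inside $V(F)$ are altered''; Fact~2 only promises a \emph{near} $(F,\varphi)$-stable outcome. That weaker conclusion is still sufficient here, since the ambient proof explicitly declares it will not distinguish $\varphi$ from near $(F,\varphi)$-stable colorings. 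The main advantage of the paper's approach is that going through color~$1$ makes the interaction bookkeeping disappear entirely and keeps strict $(F,\varphi)$-stability for free.
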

	
	\proof[Proof of Claim~\ref{x-y-missing-colors}]
	
	 By doing  $(\pbar(x),1)-(1,2)$-swaps at $x$, 
	we find $(F,\varphi)$-stable $\varphi'\in \CC^\Delta(G-rs_1)$ such that $\pbar'(x)=2$. 
	Now, let $\pbar'(y)=\lambda$. If $\lambda=2$, then doing $(2,1)-(1,\Delta)$-swaps at both $x$
	and $y$, we find $(F,\varphi')$-stable $\varphi''\in \CC^\Delta(G-rs_1)$ such that   $\pbar''(x)=\pbar''(y)=\Delta$. If  $\lambda\ne 2$, 
	by doing $(\lambda,1)-(1,\Delta)$-swaps at $y$, 
	 we find $(F,\varphi')$-stable $\varphi''\in \CC^\Delta(G-rs_1)$ such that  $\pbar''(x)=2$ and $\pbar''(y)=\Delta$. As $\varphi''$ is $(F,\varphi')$-stable and $\varphi'$ is $(F,\varphi)$-stable, it  follows that $\varphi''$ is $(F,\varphi)$-stable. So we can take $\varphi''$
	 to be $\varphi$. 
	\qed 
	
	By Claim~\ref{x-y-missing-colors}, we assume $\pbar(x)=2$ and $\pbar(y)=\Delta$ 
	or $\pbar(x)=\pbar(y)=\Delta$ and so consider two cases below. 
	
	\smallskip 
		{\noindent\bf \setword{Subcase 2.1}{Case 1}: $\pbar(x)=2$ and $\pbar(y)=\Delta$.}
	\smallskip 

	Note that by doing first a $(2,1)$-swap at $x$, then a $(1,\Delta)$-swap at both $x$ and $y$, and finally a 
	$(1,2)$-swap at $y$, we can always identify this current case with the case that $\pbar(x)=\Delta$ and $\pbar(y)=2$.
	Let $\varphi(ux)=\tau$ and $\varphi(uy)=\lambda$.  By exchanging the roles of 
	the two colors 2 and $\Delta$,  we consider two cases below: (A) $\varphi(uy)=\lambda=1$;
	and (B) $\varphi(uy)=\lambda$ is 2-inducing. 
	(When $\varphi(uy)$ is $\Delta$-inducing, by  assuming  $\pbar(x)=\Delta$ and $\pbar(y)=2$,   the argument will be symmetric to the argument for case (B) above.)
	
	In both cases of (A) and (B), we do  $(\Delta,\lambda)-(\lambda,1)$-swaps at $y$ and   call the resulting coloring $\varphi_1$ and the resulting multifan  $F_1$.  Note that $\varphi_1$ is near $(F,\varphi)$-stable. 		Let $\varphi_1(s_1u)=\delta$.   
	The current coloring on $S(u; s_1, x, y)$ is as shown in $J_1$  of  Figure~\ref{J1}.  
	
	\begin{CLA}\label{claim1}
	The color $\varphi_1$ can be modified into a near $(F_1,\varphi_1)$-stable coloring such that 
	the color on 
	$S(u; s_1, x, y)$   is as  in $J_2$ of Figure~\ref{J1}. 
	\end{CLA}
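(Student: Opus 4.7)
The goal is to convert $\varphi_1$ into a near $(F,\varphi)$-stable coloring whose restriction to the star $S(u;s_1,x,y)$ matches $J_2$, via a bounded sequence of Kempe swaps selected according to the types of $\delta=\varphi_1(s_1u)$, $\varphi_1(ux)$, and $\varphi_1(uy)$.

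First, I would pin down $\varphi_1$ on $S(u;s_1,x,y)$. Under $\varphi_1$ we already have $\pbar_1(x)=2$ and $\pbar_1(y)=1$ (by Claim~\ref{x-y-missing-colors} and the swap $(\Delta,\lambda)$-$(\lambda,1)$ at $y$), the edge $s_1u$ carries the color $\delta$, and the edges $ux,uy$ carry colors determined by tracking the two chains that crossed $u$ during the swaps at $y$; in case (A) the chain $P_y(\Delta,1)$ forces $\varphi_1(uy)=\Delta$, while in case (B) it runs $y,u,\ldots$ and the subsequent $(\lambda,1)$-swap refreshes $uy$ to $1$ iff the chain continues past $u$. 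Since $V(F)=V(S)$ and $V(S)$ is $\varphi$-elementary with $\pbar(S)=[1,\Delta]$, each color in $[1,\Delta]$ is of exactly one of four types relative to $F_1$: it equals $1$, lies in $\{2,\Delta\}$, is $2$-inducing, or is $\Delta$-inducing.

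Next, I would perform a case analysis on the types of $\delta,\varphi_1(ux),\varphi_1(uy)$, exhibiting in each case an explicit sequence of swaps of the form $(\alpha,\beta)$-swap at $x$, at $y$, or at a vertex outside $V(F_1)$ that rearranges the star into the pattern of $J_2$. The two workhorses are Facts~1 and~2 stated just above the claim: a $(1,\alpha)$-swap at any vertex outside $V(F_1)$ is near $(F_1,\varphi_1)$-stable because $r$ and $\pbar_1^{-1}(\alpha)$ are $(1,\alpha)$-linked by Lemma~\ref{thm:vizing-fan1}~\eqref{thm:vizing-fan1b}; a $(\tau,\delta')$-swap with $\tau$ a $2$-inducing and $\delta'$ a $\Delta$-inducing color is likewise near-stable because $\pbar_1^{-1}(\tau)$ and $\pbar_1^{-1}(\delta')$ are already $(\tau,\delta')$-linked by Lemma~\ref{thm:vizing-fan2}~\eqref{thm:vizing-fan2-a}. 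These linkage properties guarantee that no swap I apply at $x$, $y$, or another external vertex perturbs a missing color of $F_1$, so the resulting coloring remains near $(F_1,\varphi_1)$-stable and hence near $(F,\varphi)$-stable.

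The principal obstacle is the bookkeeping across the combinations of types for $\delta,\varphi_1(ux),\varphi_1(uy)$; several cases---in particular those where $\delta$ or $\varphi_1(uy)$ is $\Delta$-inducing---require a cascade of three or four swaps (analogous to the $(1,\delta)$-$(\delta,\tau)$-$(\tau,1)$-$(1,\Delta)$-$(\Delta,\delta)$-$(\delta,1)$ sequence already used in Case~1), and the composition must be shown to neither create a conflict at $u$ nor alter any edge of $F_1$. For each case, once the star attains the pattern of $J_2$, a direct check confirms that the resulting coloring lies in $\CC^\Delta(G-rs_1)$ and that the associated multifan (up to relabelling of colors and vertices of $N_{\Delta-1}(r)$) coincides with $F_1$, establishing the claim.
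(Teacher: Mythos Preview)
Your plan is too vague to constitute a proof, and it misses the two ideas that actually make the claim work.

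First, the setup: under $\varphi_1$ the star is completely determined, as in $J_1$ of Figure~\ref{J1}. In particular $\varphi_1(uy)=\Delta$ always (in both cases (A) and (B) the $(\Delta,\lambda)$-swap at $y$ recolors $uy$ from $\lambda$ to $\Delta$, and the subsequent $(\lambda,1)$-swap does not touch $uy$). So there is nothing to track and no case split on $\varphi_1(uy)$.

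Second, the real argument is not a generic case analysis on the types of $\delta$ and $\tau$. The paper's proof hinges on a single dichotomy: whether $u\in P_y(1,\delta,\varphi_1)$. If yes, one $(1,\delta)$-swap at $y$ already gives $J_2$. If no, then after the swap $P_{s_1}(\delta,\Delta)=s_1uy$, which forces $\delta$ to be $\Delta$-inducing in $F_1$ by Lemma~\ref{thm:vizing-fan2}\eqref{thm:vizing-fan2-a}; a further contradiction argument (using $(2,\tau)$-$(\tau,1)$ swaps at $x$ and the linkage $P_{s_1}(2,\delta)$) forces $\tau$ to be $2$-inducing. The decisive step is then a $(\tau,\delta)$-swap \emph{at the multifan vertex} $\pbar^{-1}(\delta)$ (and hence also at $\pbar^{-1}(\tau)$), which exchanges the roles of $\tau$ and $\delta$ in the fan: in the new multifan $F_2$, $\delta$ is $2$-inducing. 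Now the obstruction disappears: $u\in P_y(1,\delta)$ must hold (else $s_1$ and $y$ would be $(\delta,\Delta)$-linked, contradicting Lemma~\ref{thm:vizing-fan2}\eqref{thm:vizing-fan2-a} for the $2$-inducing $\delta$), and the $(1,\delta)$-swap at $y$ (through $u$) delivers $J_2$.

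Your plan explicitly restricts swaps to ``$x$, $y$, or a vertex outside $V(F_1)$'', which rules out exactly the move that resolves the hard case. Fact~2 does allow a $(\tau,\delta)$-swap at $\pbar^{-1}(\tau)\in V(F_1)$ when $\tau$ is $2$-inducing, $\delta$ is $\Delta$-inducing, $\tau\ne 2$, $\delta\ne\Delta$, and $r$ is not on the chain; this is the tool you are missing. Without the type-forcing arguments and this internal role-swap, a bounded sequence of swaps at $x,y$ alone will not convert $J_1$ into $J_2$ when $\delta$ is $\Delta$-inducing and $u\notin P_y(1,\delta,\varphi_1)$.
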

		
	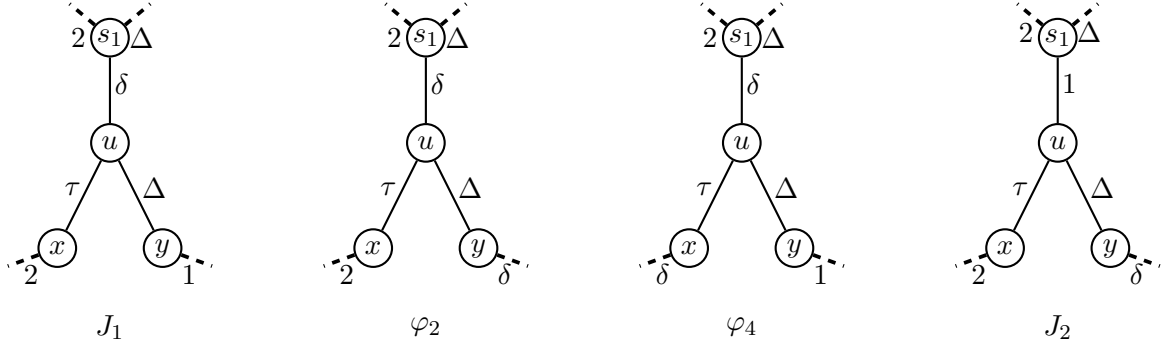
\begin{figure}[!htb]
		\begin{center}
			\begin{tikzpicture}[scale=0.7]
			
			{\tikzstyle{every node}=[draw ,circle,fill=white, minimum size=0.5cm,
				inner sep=0pt]
				\draw[black,thick](0,-3) node (s1)  {$s_1$};
				\draw [black,thick](0, -5) node (u)  {$u$};
				\draw [black,thick](-1, -7) node (x)  {$x$};
				\draw [black,thick](1, -7) node (y)  {$y$};
			}
			\path[draw,thick,black]
			
			(s1) edge node[name=la,pos=0.4] {\color{black}\,\,\, $\delta$} (u)
			(u) edge node[name=la,pos=0.4] {\color{black}$\tau$\quad\quad} (x)
			(u) edge node[name=la,pos=0.4] {\color{black}\,\,\,\,\,\,\, $\Delta$} (y);
			
			%	(r) edge node[name=la,pos=0.6] {\color{black}\quad$\alpha$} (sa)
			
			\draw[dashed, black, line width=0.5mm] (s1)--++(140:1cm);
			\draw[dashed, black, line width=0.5mm] (s1)--++(40:1cm); 
			\draw[dashed, black, line width=0.5mm] (x)--++(200:1cm); 
			\draw[dashed, black, line width=0.5mm] (y)--++(340:1cm);
			\draw[black] (-0.6, -3) node {$2$};  
			\draw[black] (0.6, -3) node {$\Delta$};  
			\draw[black] (-1.5, -7.5) node {$2$}; 
			\draw[black] (1.5, -7.5) node {$1$};
			
			\draw[black] (0, -8.5) node {$J_1$}; 
			
			\begin{scope}[shift={(6,0)}]
			
			{\tikzstyle{every node}=[draw ,circle,fill=white, minimum size=0.5cm,
				inner sep=0pt]
				\draw[black,thick](0,-3) node (s1)  {$s_1$};
				\draw [black,thick](0, -5) node (u)  {$u$};
				\draw [black,thick](-1, -7) node (x)  {$x$};
				\draw [black,thick](1, -7) node (y)  {$y$};
			}
			\path[draw,thick,black]
			
			(s1) edge node[name=la,pos=0.4] {\color{black}\,\,\, $\delta$} (u)
			(u) edge node[name=la,pos=0.4] {\color{black}$\tau$\quad\quad} (x)
			(u) edge node[name=la,pos=0.4] {\color{black}\,\,\,\,\,\,\, $\Delta$} (y);
			
			%	(r) edge node[name=la,pos=0.6] {\color{black}\quad$\alpha$} (sa)
			
			\draw[dashed, black, line width=0.5mm] (s1)--++(140:1cm);
			\draw[dashed, black, line width=0.5mm] (s1)--++(40:1cm); 
			\draw[dashed, black, line width=0.5mm] (x)--++(200:1cm); 
			\draw[dashed, black, line width=0.5mm] (y)--++(340:1cm);
			\draw[black] (-0.6, -3) node {$2$};  
			\draw[black] (0.6, -3) node {$\Delta$};  
			\draw[black] (-1.5, -7.5) node {$2$}; 
			\draw[black] (1.5, -7.5) node {$\delta$};
			
			\draw[black] (0, -8.5) node {$\varphi_2$}; 
			\end{scope}

				\begin{scope}[shift={(12,0)}]
			
			{\tikzstyle{every node}=[draw ,circle,fill=white, minimum size=0.5cm,
				inner sep=0pt]
				\draw[black,thick](0,-3) node (s1)  {$s_1$};
				\draw [black,thick](0, -5) node (u)  {$u$};
				\draw [black,thick](-1, -7) node (x)  {$x$};
				\draw [black,thick](1, -7) node (y)  {$y$};
			}
			\path[draw,thick,black]
			
			(s1) edge node[name=la,pos=0.4] {\color{black}\,\,\, $\delta$} (u)
			(u) edge node[name=la,pos=0.4] {\color{black}$\tau$\quad\quad} (x)
			(u) edge node[name=la,pos=0.4] {\color{black}\,\,\,\,\,\,\, $\Delta$} (y);
			
			%	(r) edge node[name=la,pos=0.6] {\color{black}\quad$\alpha$} (sa)
			
			\draw[dashed, black, line width=0.5mm] (s1)--++(140:1cm);
			\draw[dashed, black, line width=0.5mm] (s1)--++(40:1cm); 
			\draw[dashed, black, line width=0.5mm] (x)--++(200:1cm); 
			\draw[dashed, black, line width=0.5mm] (y)--++(340:1cm);
			\draw[black] (-0.6, -3) node {$2$};  
			\draw[black] (0.6, -3) node {$\Delta$};  
			\draw[black] (-1.5, -7.5) node {$\delta$}; 
			\draw[black] (1.5, -7.5) node {$1$};
			
			\draw[black] (0, -8.5) node {$\varphi_4$}; 
			\end{scope}
			\begin{scope}[shift={(18,0)}]

			{\tikzstyle{every node}=[draw ,circle,fill=white, minimum size=0.5cm,
				inner sep=0pt]
				\draw[black,thick](0,-3) node (s1)  {$s_1$};
				\draw [black,thick](0, -5) node (u)  {$u$};
				\draw [black,thick](-1, -7) node (x)  {$x$};
				\draw [black,thick](1, -7) node (y)  {$y$};
			}
			\path[draw,thick,black]
			
			(s1) edge node[name=la,pos=0.4] {\color{black}\,\,\, $1$} (u)
			(u) edge node[name=la,pos=0.4] {\color{black}$\tau$\quad\quad} (x)
			(u) edge node[name=la,pos=0.4] {\color{black}\,\,\,\,\,\,\, $\Delta$} (y);
			
			%	(r) edge node[name=la,pos=0.6] {\color{black}\quad$\alpha$} (sa)
			
			\draw[dashed, black, line width=0.5mm] (s1)--++(140:1cm);
			\draw[dashed, black, line width=0.5mm] (s1)--++(40:1cm); 
			\draw[dashed, black, line width=0.5mm] (x)--++(200:1cm); 
			\draw[dashed, black, line width=0.5mm] (y)--++(340:1cm);
			\draw[black] (-0.6, -2.9) node {$2$};  
			\draw[black] (0.6, -2.9) node {$\Delta$};  
			\draw[black] (-1.5, -7.5) node {$2$}; 
			\draw[black] (1.5, -7.5) node {$\delta$};
			\draw[black] (0, -8.5) node {$J_2$}; 
			\end{scope}
			%		\node [thick] at ($(s2)!.5!(sa)$) {\ldots}; 
			%		\node [thick] at ($(sa2)!.5!(sb)$) {\ldots}; 
			%		\node [thick] at ($(sb2)!.5!(sd1)$) {\ldots}; 
			\end{tikzpicture}
		\end{center}
	\vspace{-0.7cm}
		\caption{Coloring of $S(u; s_1, x, y)$}
		\label{J1}
	\end{figure}

	\proof[Proof of Claim~\ref{claim1}]
		Since $s_1$ and $r$ are $(1,\Delta)$-linked by Lemma~\ref{thm:vizing-fan1}~\eqref{thm:vizing-fan1b}, 
	we know  $\varphi(s_1u)=\delta\ne 1$. 
	If $u\in P_y(1,\delta,\varphi_1)$, then a $(1,\delta)$-swap at $y$ gives $J_2$. Thus, we assume 
	$u\not\in P_y(1,\delta,\varphi_1)$. This implies that  $\delta$ is $\Delta$-inducing with respect to  $F_1$ and $\varphi_1$. (Otherwise, after a $(1,\delta)$-swap at $y$, $s_1$ and $\pbar_1^{-1}(\delta)$ are $(\delta,\Delta)$-unlinked, showing a contradiction to Lemma~\ref{thm:vizing-fan2} \eqref{thm:vizing-fan2-a}.)
	
	Let $\varphi_2=\varphi_1/P_y(1,\delta,\varphi_1)$ (see Figure~\ref{J1}). 
	We claim that $\tau$    is $2$-inducing with respect to $F_1$ and $\varphi_2$.  
	Otherwise $\tau$   is  1 or is $\Delta$-inducing with respect to $F_1$ and $\varphi_2$.  We do  $(2,\tau)-(\tau,1)$-swaps at $x$ and call the resulting coloring $\varphi_2'$ and the resulting multifan $F_1'$. 
	Again, as $P_{s_1}(\delta,\Delta, \varphi_2')=s_1uy$, $\delta$ is still a $\Delta$-inducing color of $F_1'$ with respect to $\varphi_2'$ by Lemma~\ref{thm:vizing-fan2}~\eqref{thm:vizing-fan2-a}. 
	Since $\varphi_2'(ux)=2$,  we must have $u\in P_x(1,\delta, \varphi_2')$: 
	otherwise, after a $(1,\delta)$-swap at $x$, $s_1$ and $x$ are $(2,\delta)$-linked with respect to the current coloring, 
	contradicting Lemma~\ref{thm:vizing-fan2}~\eqref{thm:vizing-fan2-a}. 
	Now, let $\varphi_2^*$ be obtained from  $\varphi_2'$ by doing a $(1,\delta)$-swap at both $x$
	and $y$. We get  $P_{s_1}(1,\Delta,\varphi_2^*)=s_1uy$, showing a contradiction to 
	Lemma~\ref{thm:vizing-fan1}~\eqref{thm:vizing-fan1b} that $s_1$ and $r$
	are $(1,\Delta)$-linked with respect to $\varphi_2^*$. 
	
	Thus   $\tau$   is 2-inducing with respect to $F_1$ and $\varphi_2$.  
	First, we let $\varphi_3=\varphi_2/P_x(1,2,\varphi_2)$. 
	Note that  $u\not\in P_x(1,\delta, \varphi_3)$ and $u\not\in P_y(1,\delta, \varphi_3)$.  	Since otherwise, after a $(1,\delta)$-swap at both $x$ and $y$,  $s_1$ and $y$ are $(1,\Delta)$-linked with respect to the current coloring,  showing a contradiction to 
	Lemma~\ref{thm:vizing-fan1}~\eqref{thm:vizing-fan1b} that $s_1$ and $r$
	are $(1,\Delta)$-linked. Since $\delta$ is $\Delta$-inducing and $\tau$
	is 2-inducing with respect to $F_1$ and $\varphi_3$,  $\pbar_3^{-1}(\delta)$ and $\pbar_3^{-1}(\tau)$
	are $(\delta,\tau)$-linked  by Lemma~\ref{thm:vizing-fan2} \eqref{thm:vizing-fan2-a}.
	Then we let $\varphi_4$ be obtained from $\varphi_3$ by doing a $(1,\delta)$-swap at both $x$ and $y$ (see Figure~\ref{J1}), and doing  a
	$(\tau,\delta)$-swap at $\pbar_3^{-1}(\delta)$ (and so also at $\pbar_3^{-1}(\tau)$).  Since $\varphi_4$ is  near $(F_1,\varphi_3)$ stable,
	we let $F_2$ be the resulting multifan. 
Note that $\delta$
	is  a 2-inducing color  and $\tau$ is   a $\Delta$-inducing color of $F_2$ with respect to $\varphi_4$. As a consequence, $u\in P_y(1,\delta, \varphi_4)$.  Since otherwise, after a $(1,\delta)$-swap at $y$,  
	$s_1$ and $y$ are $(\delta,\Delta)$-linked, 
	contradicting Lemma~\ref{thm:vizing-fan2}~\eqref{thm:vizing-fan2-a}.   We then let $\varphi_5$ be obtained from $\varphi_4$
	by doing a
	$(1,\delta)$-swap at both  $x$, $y$ (and so also $u$), and then a $(1,2)$-swap at $x$. 
	 We obtain  the desired coloring on  $S(u; s_1, x, y)$. 
	 \qed

	By Claim~\ref{claim1}, we let $\varphi_2\in \CC^\Delta(G-rs_1)$ be a near $(F_1,\varphi_1)$-stable coloring and $F_2$ be a corresponding multifan such that under $\varphi_2$,  the color on  $S(u; s_1, x, y)$  is 
	as  in Figure~\ref{J1} $J_2$. 
		Now $K=(r,rs_1,s_1,s_1u, u,uy,y)$ is a Kierstead path with respect to $rs_1$ 
	and $\varphi_2$. 
	Since $d_G(s_1)=\Delta-1$, by Lemma~\ref{Lemma:kierstead path1} (b),  $y$ and $s_1$ are $(2,\delta)$-linked. This implies that 
	$\delta$ must be a 2-inducing color of $F_2$, as otherwise, 
	$s_1$ and $\pbar_2^{-1}(\delta)$ should be $(2,\delta)$-linked. 
	If $\tau$ is $\Delta$-inducing of $F_2$, then as $\pbar_2^{-1}(\delta)$ and $\pbar_2^{-1}(\tau)$
	are $(\delta,\tau)$-linked by Lemma~\ref{thm:vizing-fan2} \eqref{thm:vizing-fan2-a},  
	we  do a $(\delta,\tau)$-swap at $y$. Again by  Lemma~\ref{Lemma:kierstead path1} (b),
	$y$ and $s_1$ are $(2,\tau)$-linked, showing a contradiction to 
	Lemma~\ref{thm:vizing-fan2} \eqref{thm:vizing-fan2-a} 
that $\pbar_2^{-1}(\tau)$ and $s_1$
	are $(2,\tau)$-linked. 
	Therefore, $\tau$ is a $2$-inducing color of $F_2$. We first do $(2,1)-(1,\Delta)$ swaps at $x$, and let $\varphi_3$ be the resulting coloring (see Figure~\ref{J3}).  At this step, $\varphi_3(s_1u)=1$, $\varphi_3(uy)=\Delta$, 
	$\pbar_3(y)=\delta$, and  
	$y$ and $s_1$ are $(2,\delta)$-linked with respect to $\varphi_3$ by Lemma~\ref{Lemma:kierstead path1} (b).  
	Call this fact $(*)$.
	
	Let $\varphi_4=\varphi_3/P_x(\tau,\Delta,\varphi_3)$ (see Figure~\ref{J3}) and $F_3$ be the resulting multifan.  Since $\varphi_3^{-1}(\tau)$ appears before the edge with color $\tau$ in $F_2$,  $\tau$ is still a  2-inducing color of 
	$F_3$.
	As $s_1$ and $r$
	are $(1,\Delta)$-linked by Lemma~\ref{thm:vizing-fan1}~\eqref{thm:vizing-fan1b}, 
	we have $u\in P_x(1,\tau,\varphi_4)$.  Let $\varphi_5=\varphi_4/P_x(1,\tau,\varphi_4)$.  
	The coloring of $S(u;s_1,x,y)$
	is now as in Figure~\ref{J3} $J_3$.
	Since $2,\delta \not\in \{1,\tau, \Delta\}$, $y$ and $s_1$ are still $(2,\delta)$-linked with respect to 
	$\varphi_5$ by fact $(*)$, which further implies that $\delta$ is  a 2-inducing color of $F_3$ 
	with respect to $\varphi_5$. Since $\varphi_5$ is $(F_3,\varphi_4)$-stable, $\tau$ is still a 2-inducing colors of 
	$F_3$ with respect to $\varphi_5$. 
	We consider two cases to finish the remaining part of the proof. 
	
	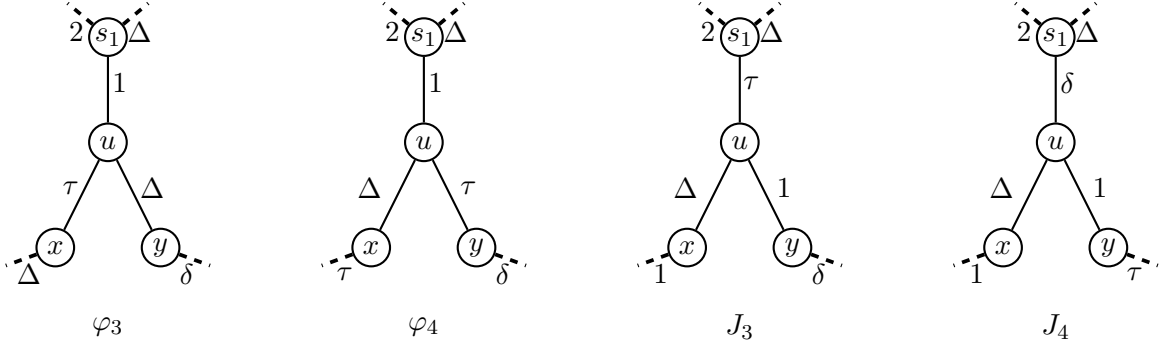
\begin{figure}[!htb]
		\begin{center}
			\begin{tikzpicture}[scale=0.7]

			{\tikzstyle{every node}=[draw ,circle,fill=white, minimum size=0.5cm,
				inner sep=0pt]
				\draw[black,thick](0,-3) node (s1)  {$s_1$};
				\draw [black,thick](0, -5) node (u)  {$u$};
				\draw [black,thick](-1, -7) node (x)  {$x$};
				\draw [black,thick](1, -7) node (y)  {$y$};
			}
			\path[draw,thick,black]
			
			(s1) edge node[name=la,pos=0.4] {\color{black}\,\,\, $1$} (u)
			(u) edge node[name=la,pos=0.4] {\color{black}$\tau$\quad\quad} (x)
			(u) edge node[name=la,pos=0.4] {\color{black}\,\,\,\,\,\,\, $\Delta$} (y);
			
			%	(r) edge node[name=la,pos=0.6] {\color{black}\quad$\alpha$} (sa)
			
			\draw[dashed, black, line width=0.5mm] (s1)--++(140:1cm);
			\draw[dashed, black, line width=0.5mm] (s1)--++(40:1cm); 
			\draw[dashed, black, line width=0.5mm] (x)--++(200:1cm); 
			\draw[dashed, black, line width=0.5mm] (y)--++(340:1cm);
			\draw[black] (-0.6, -2.9) node {$2$};  
			\draw[black] (0.6, -2.9) node {$\Delta$};  
			\draw[black] (-1.5, -7.5) node {$\Delta$}; 
			\draw[black] (1.5, -7.5) node {$\delta$};
			\draw[black] (0, -8.5) node {$\varphi_3$}; 
			
			\begin{scope}[shift={(6,0)}]
			{\tikzstyle{every node}=[draw ,circle,fill=white, minimum size=0.5cm,
				inner sep=0pt]
				\draw[black,thick](0,-3) node (s1)  {$s_1$};
				\draw [black,thick](0, -5) node (u)  {$u$};
				\draw [black,thick](-1, -7) node (x)  {$x$};
				\draw [black,thick](1, -7) node (y)  {$y$};
			}
			\path[draw,thick,black]
			
			(s1) edge node[name=la,pos=0.4] {\color{black}\,\,\, $1$} (u)
			(u) edge node[name=la,pos=0.4] {\color{black}$\Delta$\quad\quad\,} (x)
			(u) edge node[name=la,pos=0.4] {\color{black}\,\,\,\,\,\,\, $\tau$} (y);
			
			%	(r) edge node[name=la,pos=0.6] {\color{black}\quad$\alpha$} (sa)
			
			\draw[dashed, black, line width=0.5mm] (s1)--++(140:1cm);
			\draw[dashed, black, line width=0.5mm] (s1)--++(40:1cm); 
			\draw[dashed, black, line width=0.5mm] (x)--++(200:1cm); 
			\draw[dashed, black, line width=0.5mm] (y)--++(340:1cm);
			\draw[black] (-0.6, -2.9) node {$2$};  
			\draw[black] (0.6, -2.9) node {$\Delta$};  
			\draw[black] (-1.5, -7.5) node {$\tau$}; 
			\draw[black] (1.5, -7.5) node {$\delta$};
			\draw[black] (0, -8.5) node {$\varphi_4$}; 
			\end{scope}

				\begin{scope}[shift={(12,0)}]
			{\tikzstyle{every node}=[draw ,circle,fill=white, minimum size=0.5cm,
				inner sep=0pt]
				\draw[black,thick](0,-3) node (s1)  {$s_1$};
				\draw [black,thick](0, -5) node (u)  {$u$};
				\draw [black,thick](-1, -7) node (x)  {$x$};
				\draw [black,thick](1, -7) node (y)  {$y$};
			}
			\path[draw,thick,black]
			
			(s1) edge node[name=la,pos=0.4] {\color{black}\,\,\, $\tau$} (u)
			(u) edge node[name=la,pos=0.4] {\color{black}$\Delta$\quad\quad\,} (x)
			(u) edge node[name=la,pos=0.4] {\color{black}\,\,\,\,\,\,\, $1$} (y);
			
			%	(r) edge node[name=la,pos=0.6] {\color{black}\quad$\alpha$} (sa)
			
			\draw[dashed, black, line width=0.5mm] (s1)--++(140:1cm);
			\draw[dashed, black, line width=0.5mm] (s1)--++(40:1cm); 
			\draw[dashed, black, line width=0.5mm] (x)--++(200:1cm); 
			\draw[dashed, black, line width=0.5mm] (y)--++(340:1cm);
			\draw[black] (-0.6, -2.9) node {$2$};  
			\draw[black] (0.6, -2.9) node {$\Delta$};  
			\draw[black] (-1.5, -7.5) node {$1$}; 
			\draw[black] (1.5, -7.5) node {$\delta$};
			
			\draw[black] (0, -8.5) node {$J_3$}; 
			\end{scope}

			\begin{scope}[shift={(18,0)}]

			{\tikzstyle{every node}=[draw ,circle,fill=white, minimum size=0.5cm,
				inner sep=0pt]
				\draw[black,thick](0,-3) node (s1)  {$s_1$};
				\draw [black,thick](0, -5) node (u)  {$u$};
				\draw [black,thick](-1, -7) node (x)  {$x$};
				\draw [black,thick](1, -7) node (y)  {$y$};
			}
			\path[draw,thick,black]
			
			(s1) edge node[name=la,pos=0.4] {\color{black}\,\,\, $\delta$} (u)
			(u) edge node[name=la,pos=0.4] {\color{black}$\Delta$\quad\quad\,} (x)
			(u) edge node[name=la,pos=0.4] {\color{black}\,\,\,\,\,\,\, $1$} (y);
			
			%	(r) edge node[name=la,pos=0.6] {\color{black}\quad$\alpha$} (sa)
			
			\draw[dashed, black, line width=0.5mm] (s1)--++(140:1cm);
			\draw[dashed, black, line width=0.5mm] (s1)--++(40:1cm); 
			\draw[dashed, black, line width=0.5mm] (x)--++(200:1cm); 
			\draw[dashed, black, line width=0.5mm] (y)--++(340:1cm);
			\draw[black] (-0.6, -2.9) node {$2$};  
			\draw[black] (0.6, -2.9) node {$\Delta$};  
			\draw[black] (-1.5, -7.5) node {$1$}; 
			\draw[black] (1.5, -7.5) node {$\tau$};
			\draw[black] (0, -8.5) node {$J_4$}; 
			\end{scope}
			\end{tikzpicture}
		\end{center}
		\vspace{-0.5cm}
		\caption{Coloring of $S(u; s_1, x, y)$}
		\label{J3}
	\end{figure}

\smallskip 
{\noindent \bf \setword{Subcase 2.1.1}{Case 1.1} : $\tau\prec \delta$ in $F_3$ with respect to $\varphi_5$.}
\smallskip 

	Let $s_{i}\in N_{\Delta-1}(r)$ such that $\pbar_5(s_{i})=\delta$.
	Since $y$ and $s_1$ are still $(2,\delta)$-linked with respect to 
	$\varphi_5$ and $\delta$ is 2-inducing of $F_3$, 
	by Lemma~\ref{thm:vizing-fan2} \eqref{thm:vizing-fan2-b}, 
	$r\in P_{s_{i}}(2,\delta,\varphi_5)$.  We reach a contradiction  through the following operations:
(1) a $(2,\delta)$-swap at $s_{i}$ (and so also at $r$); (2) a $(1,2)$-swap at $x$ and $s_{i}$;
and (3)  shift from $s_j$ to $s_{i}$, where we assume $\varphi_5(rs_j)=\tau$
for some $j\in [2,\Delta-2]$ and $s_j, s_{j+1}, \ldots, s_i$ is the 2-inducing sequence of $F_3$
starting at $s_j$ and ending at $s_i$.   
	Denote the new coloring by $\varphi_6$. 
	Now, $\pbar_6(r)=\tau$, $\varphi_6(s_1u)=\tau$, $\varphi_6(ux)=\Delta$, and $\pbar_6(x)=2$,
	and $K=(r,rs_1,s_1,s_1u, u, ux,x)$ is a Kierstead path with respect to $rs_1$
	and $\varphi_6$. Since $d_G(s_1)=\Delta-1$, 
	we get  a contradiction  to Lemma~\ref{Lemma:kierstead path1} (a) that $\{r,s_1,u,x\}$ is $\varphi_6$-elementary.
	
 \smallskip 
	{\noindent \bf Subcase 2.1.2: $\delta\prec \tau$ in $F_3$ with respect to $\varphi_5$.}
\smallskip 
	
	We only show that by performing Kempe changes, we can find an $(F_3,\varphi_5)$-stable coloring  such that the color on $S(u;s_1,x,y)$ with respect to it
	is as given in Figure~\ref{J3} $J_4$. Then the proof follows
	 the same ideas as in Case~\ref{Case 1.1} by exchanging the roles of $\tau$ and $\delta$. 
	Based on the coloring in Figure~\ref{J3} $J_3$,  do a $(1,\delta)$-swap at both $x$
	and $y$ and denote the resulting coloring by $\varphi_6$. 
	
	\begin{CLA}\label{claim2}
	$u\in P_y(1,\tau,\varphi_6)$.
	\end{CLA}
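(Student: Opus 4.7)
The plan is to argue by contradiction. Suppose $u\notin P_y(1,\tau,\varphi_6)$, and let $\varphi_7=\varphi_6/P_y(1,\tau,\varphi_6)$. Since every edge incident to $u$ lies outside the swapped chain (note $\varphi_6(uy)=\delta\notin\{1,\tau\}$, $\varphi_6(ux)=\Delta\notin\{1,\tau\}$, $\varphi_6(s_1u)=\tau$ but $u\notin P_y$), the swap preserves $\varphi_7(s_1u)=\tau$, $\varphi_7(ux)=\Delta$, $\varphi_7(uy)=\delta$; moreover $\pbar_7(y)=\tau$ and $\pbar_7(x)=\delta$ (the latter because $\delta\notin\{1,\tau\}$). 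I would first verify that $F_3$ remains a typical $2$-inducing multifan under $\varphi_7$ with $\delta\prec\tau$. For this, observe that by Lemma~\ref{thm:vizing-fan1}(b), $r$ and $s_{\delta-1}$ are $(1,\delta)$-linked under $\varphi_5$, so the chain $P_r(1,\delta,\varphi_5)=P_{s_{\delta-1}}(1,\delta,\varphi_5)$ is disjoint from $P_x(1,\delta,\varphi_5)$ and $P_y(1,\delta,\varphi_5)$ (neither $x$ nor $y$ lies in $V(F_3)$, and each is an endpoint of its own $(1,\delta)$-chain); hence the $(1,\delta)$-swap forming $\varphi_6$ leaves the missing colors of every $s_i\in V(F_3)$ intact. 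A similar argument shows the subsequent $(1,\tau)$-swap preserves $\pbar(s_i)$ for all $s_i\in V(F_3)$.

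Next I would trace the $(\delta,\tau)$-chain $P_y(\delta,\tau,\varphi_7)$. Since $\pbar_7(y)=\tau$, the chain begins at $y$ along its unique $\delta$-edge $uy$, reaches $u$, then continues along $u$'s unique $\tau$-edge $s_1u$, reaching $s_1$. Since $s_1$ misses $\{2,\Delta\}$, with $\delta,\tau\in[2,\alpha+1]$ both different from $\Delta$ and at most one equal to $2$, the chain continues out of $s_1$ along its other $\{\delta,\tau\}$-colored edge. Thus $P_y(\delta,\tau,\varphi_7)$ contains $y$, $u$, $s_1$ consecutively and extends further inside the graph.

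The contradiction is then extracted by confronting this forced chain with Lemma~\ref{thm:vizing-fan2}~\eqref{thm:vizing-fan2-b}: since $\delta\prec\tau$ are both $2$-inducing colors of $F_3$ under $\varphi_7$ (both induced by the color $2\in\pbar(s_1)$), either $s_{\delta-1}$ and $s_{\tau-1}$ are $(\delta,\tau)$-linked, or $r\in P_{s_{\tau-1}}(\tau,\delta,\varphi_7)$. In the linked case, the chain $P_{s_{\delta-1}}(\delta,\tau,\varphi_7)=P_{s_{\tau-1}}(\delta,\tau,\varphi_7)$ has its two endpoints already accounted for inside $V(F_3)$, so the chain $P_y(\delta,\tau,\varphi_7)$ found above must be a \emph{different} $(\delta,\tau)$-chain terminating at a vertex outside $\{s_{\delta-1},s_{\tau-1}\}$ that misses $\delta$ or $\tau$, which cannot occur inside $V(F_3)=N_{\Delta-1}[r]$. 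In the unlinked case, where $r\in P_{s_{\tau-1}}(\tau,\delta,\varphi_7)$, one performs a $(\delta,\tau)$-swap at $y$ along $P_y(\delta,\tau,\varphi_7)$ to produce a coloring where $\pbar(y)=\delta$, $\varphi(s_1u)=\delta$, $\varphi(uy)=\tau$; then the Kierstead path $(r,rs_1,s_1,s_1u,u,uy,y)$ (after a preliminary shift inside $F_3$ to relabel $\delta$ into $\pbar(s_1)$) satisfies $\min\{d_G(s_1),d_G(u)\}=\Delta-1<\Delta$, and Lemma~\ref{Lemma:kierstead path1}(a) gives elementariness of its vertex set, which together with $\pbar(r)=1$ and the $(1,\tau)$-structure at $r$ yields a repeated missing color and the required contradiction.

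The main technical obstacle is the bookkeeping that keeps track of which vertices of $V(F_3)$ may lie in the chains being swapped, so that the $2$-inducing relation $\delta\prec\tau$ in $F_3$ remains intact after both the $(1,\delta)$-swap forming $\varphi_6$ and the hypothetical $(1,\tau)$-swap forming $\varphi_7$; this is needed to legitimately invoke Lemma~\ref{thm:vizing-fan2}~\eqref{thm:vizing-fan2-b} in the final step. Once the preservation of $F_3$ is secured, the forced segment $y$-$u$-$s_1$ of $P_y(\delta,\tau,\varphi_7)$ clashes unavoidably with the $F_3$-internal $(\delta,\tau)$ structure, producing the desired contradiction.
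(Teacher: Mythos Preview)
Your approach has a genuine gap in the ``linked case'' of the final paragraph. You correctly observe that under $\varphi_7$ the chain $P_y(\delta,\tau,\varphi_7)$ begins $y$--$u$--$s_1$--$\cdots$, and that if $s_{\delta-1}$ and $s_{\tau-1}$ are $(\delta,\tau)$-linked then the other endpoint of $P_y$ cannot be $s_{\delta-1}$ or $s_{\tau-1}$. But you then assert that this endpoint ``cannot occur inside $V(F_3)=N_{\Delta-1}[r]$'' as though that were a contradiction. It is not: the other endpoint of $P_y$ is simply some $(\Delta-1)$-vertex of $G$ missing $\delta$ or $\tau$, and nothing forces it to lie in $N_{\Delta-1}[r]$. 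There are many such vertices elsewhere in $G$. So the linked case yields no contradiction along your line. The ``unlinked case'' is too sketchy to evaluate; in particular, the Kierstead path $(r,rs_1,s_1,s_1u,u,uy,y)$ you propose requires $\varphi(s_1u)\in\pbar(r)\cup\pbar(s_1)=\{1,2,\Delta\}$ and $\varphi(uy)\in\{1,2,\Delta\}$, neither of which holds for general $2$-inducing colors $\delta,\tau$, and the ``preliminary shift'' you invoke would change the uncolored edge and hence the entire setup.

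The paper's argument avoids this difficulty by not relying on Lemma~\ref{thm:vizing-fan2}\,\eqref{thm:vizing-fan2-b} for two $2$-inducing colors. Instead, it first performs a $(\delta,\Delta)$-swap at $x$ (which passes through $u$ and $y$, turning $ux$ to color $\delta$ and $uy$ to color $\Delta$), together with the $(1,\tau)$-swap at $y$. After this, $P_{s_1}(\tau,\Delta)=s_1uy$, which forces $\tau$ to become a $\Delta$-inducing color of the new multifan $F_3^*$, while $\delta$ remains $2$-inducing. Now Lemma~\ref{thm:vizing-fan2}\,\eqref{thm:vizing-fan2-a} applies (colors induced by \emph{different} colors of $\pbar(s_1)$), giving a definite $(\delta,\tau)$-link between $\pbar^{-1}(\delta)$ and $\pbar^{-1}(\tau)$. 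A short further sequence of $(\tau,\delta)$- and $(\delta,\Delta)$-swaps at $x$ and $y$ then breaks this link while keeping $F_3^*$ intact, contradicting Lemma~\ref{thm:vizing-fan2}\,\eqref{thm:vizing-fan2-a}. The key idea you are missing is this conversion of $\tau$ from $2$-inducing to $\Delta$-inducing via the $(\delta,\Delta)$-swap at $x$, which upgrades the weak dichotomy of part~\eqref{thm:vizing-fan2-b} to the hard link of part~\eqref{thm:vizing-fan2-a}.
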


\proof[Proof of Claim~\ref{claim2}]
Assume to the contrary that $u\not\in P_y(1,\tau,\varphi_6)$.
Under this assumption, it must be the case that  
$u\in P_r(1,\tau,\varphi_6)$ (otherwise, performing a $(\delta,\Delta)$-swap at $x$ and a $(1,\tau)$-swap at $u$ shows that $s_1$ and $y$ are $(1,\Delta)$-linked,  showing a contradiction to Lemma~\ref{thm:vizing-fan1}~\eqref{thm:vizing-fan1b} that $s_1$
and $r$ are $(1,\Delta)$-linked).  
Since $u\in P_r(1,\tau,\varphi_6)$,  let $\varphi_7$ be obtained by 
doing a $(1,\tau)$-swap at $y$ and $(\delta,\Delta)$-swap at $x$, and let $F_3^*$
be the resulting multifan.  
Then  $P_{s_1}(\tau,\Delta,\varphi_7)=s_1uy$, 
implying that $\tau$ is a $\Delta$-inducing color of $F_3^*$ by  Lemma~\ref{thm:vizing-fan2} \eqref{thm:vizing-fan2-a}. 
Note that  $\delta$ is still a 2-inducing color of $F_3^*$
as the only operation that changes the color sequence of $F_3$ was the $(\delta,\Delta)$-swap we did to get $\varphi_7$ from $\varphi_6$. 
Thus, $\pbar_7^{-1}(\delta)$ and $\pbar_7^{-1}(\tau)$
are $(\delta,\tau)$-linked by Lemma~\ref{thm:vizing-fan2} \eqref{thm:vizing-fan2-a}. 
Also, since $\tau$ is $\Delta$-inducing and $\delta$ is 2-inducing of $F_3^*$, 
we know  $u\not\in P_y(\tau,\delta, \varphi_7)$. 
Since otherwise, after a $(\tau,\delta)$-swap at $y$,  $s_1$
and $y$ are $(\delta,\Delta)$-linked, 
showing a contradiction to Lemma~\ref{thm:vizing-fan2}~\eqref{thm:vizing-fan2-a}. 

Let $\varphi_8=\varphi_7/P_y(\tau, \delta,\varphi_7)$.   Now $P_y(\delta, \Delta, \varphi_8)=yux$.  Note also that $u\in P_{\pbar_8^{-1}(\delta)}(\delta,\tau, \varphi_8)=P_{\pbar^{-1}_8(\tau)}(\delta,\tau,\varphi_8)$. For otherwise, after a $(\tau,\delta)$-swap at $u$,  $s_1$
and $y$ are $(\delta,\Delta)$-linked, 
	showing a contradiction to the fact that $\delta$ is still a 2-inducing color of the resulting multifan. Note that $P_x(\delta,\Delta,\varphi_8)=xuy$. 
Let $\varphi_9=\varphi_8/P_x(\delta,\Delta,\varphi_8)$. 
Now  $\pbar^{-1}_9(\delta)$ and $\pbar^{-1}_9(\tau)$
are $(\delta,\tau)$-unlinked. However, since $\varphi_9$ is $(F_3^*,\varphi_8)$-stable,  $\tau$ is still a $\Delta$-inducing color and $\delta$
is 2-inducing of $F_3^{*}$ with respect to $\varphi_9$, we get a contradiction to Lemma~\ref{thm:vizing-fan2} \eqref{thm:vizing-fan2-a}.
\qed 
	
	Thus by Claim~\ref{claim2},  $u\in P_y(1,\tau, \varphi_6)$.   Do a $(1,\tau)$-swap at $y$ (and $u$), and  denote the resulting coloring by $\varphi_7$. 
	Note that $u\in P_x(1,\delta, \varphi_7)$ (as otherwise, after a $(1,\delta)$-swap at $x$, $s_1$ and $x$ are $(1,\Delta)$-linked,  showing a contradiction to Lemma~\ref{thm:vizing-fan1}~\eqref{thm:vizing-fan1b} that $s_1$
	and $r$ are $(1,\Delta)$-linked). Let $\varphi_8=\varphi_7/P_x(1,\delta, \varphi_7)$. Now with respect to $\varphi_8$, we have the coloring
	in Figure~\ref{J3} $J_4$. By the definition,  $\varphi_8$ is $(F_3,\varphi_5)$-stable 
	so we still have $\delta\prec \tau$ in $F_3$ with respect to $\varphi_8$. The remaining proof follows
	the same ideas as in Case~\ref{Case 1.1}.

	\smallskip 
	
	{\noindent \bf Subcase 2.2: $\pbar(x)=\Delta$ and $\pbar(y)=\Delta$.}
	\smallskip

	\begin{CLA}\label{only2-out-neighbors}
		We may assume that $|N_{\Delta-1}(u)\cap N_{\Delta-1}(r)|=\Delta-4$. 
	\end{CLA}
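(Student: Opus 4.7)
\medskip

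\noindent\emph{Proof proposal.} The plan is to argue by contradiction. Assume $|N_{\Delta-1}(u)\cap N_{\Delta-1}(r)|\le \Delta-5$. Since $u\in V_\Delta$ has exactly two neighbors in $V_\Delta$ by Lemma~\ref{biregular}(a), we have $|N_{\Delta-1}(u)|=\Delta-2$, and hence $|N_{\Delta-1}(u)\setminus N_{\Delta-1}(r)|\ge 3$. In particular, beyond the two vertices $x,y$ with $\pbar(x)=\pbar(y)=\Delta$ furnished at the opening of Subcase~2.2, we may pick a third vertex $z\in N_{\Delta-1}(u)\setminus N_{\Delta-1}(r)$. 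The goal is to modify the coloring so that $\pbar(z)=2$ while keeping $\pbar(x)=\Delta$ intact; the pair $(z,x)$ then fits the hypothesis of Subcase~2.1 with the roles of $x$ and $y$ interchanged, and the argument completed there produces the desired contradiction.

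First I would set $\mu=\pbar(z)$ and arrange $\pbar(z)=2$ by a $(\mu,1)$-$(1,2)$-swap at $z$. Because $z\notin V(S)$ and $V(S)=N_{\Delta-1}[r]$ is $\varphi$-elementary, any $(1,\mu)$-chain entering $V(S)$ ends inside $V(S)$ (by Lemma~\ref{pseudo-fan-ele}(\ref{pseudo-a1}),(\ref{pseudo-b}),(\ref{pseudo-c}), according to whether $\mu\in\pbar(F)$ or $\mu\in\pbar(S)\setminus\pbar(F)$); the same lemma governs the subsequent $(1,2)$-chain starting at $z$. Using Facts~1 and 2 listed in the text, these swaps can be carried out so that the resulting coloring is near $(F,\varphi)$-stable, and in particular they preserve $\pbar(x)=\pbar(y)=\Delta$, the coloring of $F$ (and hence of the star $S(u;s_1,x,y)$ aside from changes at $z$), and the elementariness of $V(S)$.

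Once $\pbar(z)=2$ and $\pbar(x)=\Delta$ have been achieved, the two vertices $z,x\in N_{\Delta-1}(u)\setminus N_{\Delta-1}(r)$ realize precisely the configuration opening Subcase~2.1 (with $z$ in the role of $x$ and $x$ in the role of $y$). I would therefore invoke the full contradiction argument of Subcase~2.1 — the Kierstead-path analyses centered at $r$ and the chain-swap chains leading to violations of Lemma~\ref{thm:vizing-fan1}(\ref{thm:vizing-fan1b}), Lemma~\ref{thm:vizing-fan2}(\ref{thm:vizing-fan2-a}), and Lemma~\ref{Lemma:kierstead path1}(a) — applied verbatim to this relabeled pair. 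The conclusion is that $|N_{\Delta-1}(u)\cap N_{\Delta-1}(r)|\le \Delta-5$ is impossible, so $|N_{\Delta-1}(u)\cap N_{\Delta-1}(r)|=\Delta-4$ as claimed.

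The main obstacle will be Step~1: verifying that the preparatory $(\mu,1)$-$(1,2)$-swap at $z$ does not corrupt any piece of the carefully built structure at $r$. The subcases to check are $\mu=1$ (trivial), $\mu\in\{2,\Delta\}$ (done by a single Kempe change, using that $s_1$ and $r$ are $(1,\Delta)$-linked), $\mu\in\pbar(F)\setminus\{1,2,\Delta\}$ (handled by Lemma~\ref{pseudo-fan-ele}(\ref{pseudo-b}) so the $(1,\mu)$-chain from $z$ does not enter $V(S)$ before returning), and $\mu\in\pbar(S)\setminus\pbar(F)$ (handled by Lemma~\ref{pseudo-fan-ele}(\ref{pseudo-c})). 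In each subcase the swap can be arranged to keep $\{x,y\}$ untouched, at which point the reduction to Subcase~2.1 is immediate.
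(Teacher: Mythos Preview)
Your approach is essentially the paper's: pick a third vertex $z\in N_{\Delta-1}(u)\setminus N_{\Delta-1}(r)$, adjust its missing color to $2$ via $(\pbar(z),1)$-$(1,2)$-swaps, and feed the pair $(z,y)$ (or $(z,x)$) into Subcase~2.1. Two remarks.

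First, you repeatedly invoke Lemma~\ref{pseudo-fan-ele} and speak of the subcase $\mu\in\pbar(S)\setminus\pbar(F)$, but the claim lives inside Case~2, where $V(S)=V(F)$. That subcase is empty and Lemma~\ref{pseudo-fan-ele} is vacuous here; the relevant tools are just Facts~1 and~2 (i.e.\ Lemmas~\ref{thm:vizing-fan1}\eqref{thm:vizing-fan1b} and~\ref{thm:vizing-fan2}\eqref{thm:vizing-fan2-a}).

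Second, your assertion that the swaps ``can be arranged to keep $\{x,y\}$ untouched'' is not justified when $\mu=\pbar(z)=\Delta$: the chain $P_z(1,\Delta)$ may well terminate at $x$ or $y$, since both miss $\Delta$. The paper closes this gap with a one-line preliminary observation you omitted: if $x$ and $y$ are \emph{not} $(1,\Delta)$-linked, then a $(\Delta,1)$-$(1,2)$-swap at $x$ already produces $\pbar(x)=2$, $\pbar(y)=\Delta$ and lands in Subcase~2.1 directly. So one may assume $x$ and $y$ \emph{are} $(1,\Delta)$-linked, which forces $P_z(1,\Delta)$ onto a separate chain (since $P_r(1,\Delta)=P_{s_1}(1,\Delta)$ is a third chain), and then your swaps at $z$ genuinely leave $x,y$ alone. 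With that sentence inserted, your argument is complete and matches the paper.
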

	
	\proof[Proof of Claim~\ref{only2-out-neighbors}] 
	We may assume that $x$ and $y$ are $(1,\Delta)$-linked. For otherwise, performing $(\Delta,1)-(1,2)$-swaps at $x$ reduces the problem to \ref{Case 1}. 
	Assume to the contrary that $|N_{\Delta-1}(u)\cap N_{\Delta-1}(r)|\le \Delta-5$.
	Then there exists $z\in N_{\Delta-1}(u)\setminus  N_{\Delta-1}(r)$ such that $z\ne x,y$. 
	Let $\pbar(z)=\lambda$. If $\lambda=2$, 
	by exchanging  the roles of $x$ and $z$,  we reduce the problem to \ref{Case 1}. 
	Thus, $\lambda \ne 2$.  Doing  $(\lambda,1)-(1,2)$-swaps at $z$ and exchanging  the roles of $x$ and $z$ reduces the problem to \ref{Case 1}. 
	\qed 
	
	\begin{CLA}\label{2-sequence-fan}
		We  may assume that $F(r,s_1:s_\alpha:s_{\Delta-2})$ is a typical multifan with  two sequences.  
		That is, $F$ contains both $2$-inducing sequence  and $\Delta$-inducing sequence.
	\end{CLA}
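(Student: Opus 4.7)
The plan is to establish the claim by a direct rotation of the multifan around its center $r$, rather than by contradiction. Suppose $F$ has only one of its two possible inducing sequences nonempty. By the symmetric roles of the colors $2$ and $\Delta$ in $\pbar(s_1)$ (swap the two to interchange the 2-inducing and $\Delta$-inducing branches), we may assume $F$ is typical 2-inducing with $\alpha = \Delta-2$, so that $\pbar(s_i) = i+1$ for every $i \in [2, \Delta-2]$ and there is no $\Delta$-inducing vertex. The idea is to move the uncolored edge slot from $rs_1$ to an interior edge $rs_j$, which splits the single sequence into two branches meeting at the new center vertex $s_j$.

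Since $\Delta \geq 7$, fix any index $j$ with $2 \leq j \leq \Delta-3$. Let $\varphi^*$ be obtained from $\varphi$ by shifting from $s_2$ to $s_{j-1}$ (recoloring $rs_\ell$ by $\ell+1$ for each $\ell \in [2, j-1]$), uncoloring $rs_j$, and then coloring $rs_1$ by $2$. A direct inspection of each vertex gives $\pbar^*(r) = 1$, $\pbar^*(s_j) = \{j, j+1\}$, $\pbar^*(s_1) = \Delta$, $\pbar^*(s_\ell) = \ell$ for $\ell \in [2, j-1]$, and $\pbar^*(s_\ell) = \ell+1$ for $\ell \in [j+1, \Delta-2]$, so that the $\Delta$ colors missed on the $\Delta-1$ vertices of $N_{\Delta-1}[r]$ are exactly $[1,\Delta]$; in particular $V(F^*) = N_{\Delta-1}[r]$ is $\varphi^*$-elementary. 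Then
\[
F^* = (r, rs_j, s_j, rs_{j+1}, s_{j+1}, \ldots, rs_{\Delta-2}, s_{\Delta-2}, rs_{j-1}, s_{j-1}, \ldots, rs_1, s_1)
\]
is a multifan at $r$ with respect to $rs_j$ and $\varphi^*$ whose $(j+1)$-inducing branch $s_{j+1}, \ldots, s_{\Delta-2}$ has length $\Delta-2-j \geq 1$ and whose $j$-inducing branch $s_{j-1}, \ldots, s_1$ has length $j-1 \geq 1$. Because $|V(F^*)| = |V(F)| = \Delta-1$, $F^*$ remains maximum at $r$, and since we are in Case~2 where $V(S) = V(F) = N_{\Delta-1}[r]$, the vertex set $V(F^*)$ is still that of a pseudo-multifan (indeed $F^*$ is itself a pseudo-multifan here).

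Renaming $j+1 \mapsto 2$ and $j \mapsto \Delta$, together with the corresponding permutation of the remaining colors and the vertices in $N_{\Delta-1}(r)$, turns $F^*$ into a typical multifan with both a nonempty $2$-inducing and a nonempty $\Delta$-inducing sequence, which is precisely what the claim asserts we may assume. The only technical point to verify is that the shift--uncolor--recolor operation is reversible, so that any conclusion or contradiction subsequently drawn under $F^*$ and $\varphi^*$ pulls back to the original setup; this reversibility is the same observation already invoked at the start of Case~2 and in the proof of Lemma~\ref{2-inducing}, so no genuine obstacle arises.
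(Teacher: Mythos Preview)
Your rotation is the same maneuver the paper uses, but you have dropped the one constraint that makes the choice of pivot nontrivial. Throughout Case~2 the argument needs $s_1\in N_{\Delta-1}(u)\cap N_{\Delta-1}(r)$, because immediately after this claim the proof works with the edge $s_1u$ (it sets $\varphi(s_1u)=\delta$ and studies the star $S(u;s_1,x,y)$ and the Kierstead path through $s_1u$). After your shift the new ``$s_1$'' is the old $s_j$, so you must pick $j$ with $s_j\in N_{\Delta-1}(u)$; an arbitrary $j\in[2,\Delta-3]$ does not guarantee this.

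This is exactly why the paper first invokes Claim~\ref{only2-out-neighbors} to get $|N_{\Delta-1}(u)\cap N_{\Delta-1}(r)|=\Delta-4\ge 3$: among those at least three common neighbors one can choose $s_i\ne s_1$ whose missing color is not the last $2$-inducing color (equivalently $i\ne \Delta-2$), and then rotate at that $s_i$. Your use of $\Delta\ge 7$ only to make $[2,\Delta-3]$ nonempty is a tell; the inequality is really being spent to guarantee enough common $(\Delta-1)$-neighbors so that a valid pivot exists. The fix is simply to add the requirement $s_j\in N_{\Delta-1}(u)\cap N_{\Delta-1}(r)$ and cite Claim~\ref{only2-out-neighbors} for its existence; the rest of your argument then goes through and matches the paper's.
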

	\proof[Proof of Claim~\ref{2-sequence-fan}]
	Recall that $F_\varphi(r,s_1: s_\alpha:s_{\Delta-2})$ is a typical multifan. 
	As $\Delta\ge 7$, $|N_{\Delta-1}(u)\cap N_{\Delta-1}(r)|=\Delta-4\ge 3$ 
	by Claim~\ref{only2-out-neighbors}. If $F$
	is a typical 2-inducing  multifan, then let $s_i\in N_{\Delta-1}(u)\cap N_{\Delta-1}(r)$
	such that $s_i\ne s_1$ and that  $\pbar(s_i)$ is not the last 2-inducing color of $F$. 
	Then we shift from $s_2$ to $s_{i-1}$, uncolor $rs_i$, and color $rs_1$ by 2. 
	Now $F^*=(r, rs_i, s_i, rs_{i+1}, s_{i+1}, \ldots, rs_{\Delta-2}, s_{\Delta-2}, rs_{i-1}, s_{i-1}, \ldots, rs_1, s_1)$
	is a multifan with  two sequences.  By permuting the name of colors 
	and the label of vertices in $\{s_1,\ldots, s_{\Delta-2}\}$, we can assume that 
	$F=F^*$ is a typical multifan with  two sequences.  
	\qed 
	
	Let $\varphi(s_1u)=\delta$, $\varphi(ux)=\tau$,  and $\varphi(uy)=\lambda$.  By exchanging the roles of 
	the two colors 2 and $\Delta$,  we have two possibilities for $\varphi(uy)$: (A)
	 $\varphi(uy)=\lambda=1$; and (B) 
 $\varphi(uy)=\lambda$ is 2-inducing. 
	(When $\varphi(uy)$ is $\Delta$-inducing, we will first assume that $\pbar(x)=2$ and $\pbar(y)=2$ (by performing $(\Delta,1)-(1,2)$-swaps at both $x$ and $y$). Then all the argument will be symmetric to the argument for the  case (B) above.) 
	We now consider two cases to finish the proof. 
	
	\smallskip 
	{\noindent \bf \setword{Subcase 2.2.1}{Case 2.1}: $\lambda$ is not the last 2-inducing color of $F$.}
	\smallskip 
	
	We first perform $(\Delta,\lambda)-(\lambda,1)$-swaps at both $x$ and $y$. 
	Denote by $\varphi_1$ the resulting coloring and $F_1$ the corresponding multifan. 
	Since $\lambda$ is not the last 2-inducing color of $F$,  $F_1$ still has  two sequences with respect to $\varphi_1$. 
	The current coloring of $S(u;s_1,x,y)$ is given in Figure~\ref{L1} $L_1$. 
	Since $s_1$ and $r$
	are $(1,\Delta)$-linked by Lemma~\ref{thm:vizing-fan1}~\eqref{thm:vizing-fan1b}, $\delta\ne 1$. 
	We next show $u\in P_y(1,\delta, \varphi_1)$ that will lead to the coloring in Figure~\ref{L1} $L_2$ after a $(1,\delta)$-swap at both $x$ and $y$. 
	
	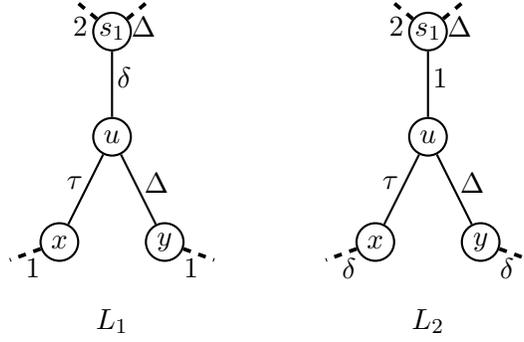
\begin{figure}[!htb]
		\begin{center}
			\begin{tikzpicture}[scale=0.7]
			
			{\tikzstyle{every node}=[draw ,circle,fill=white, minimum size=0.5cm,
				inner sep=0pt]
				\draw[black,thick](0,-3) node (s1)  {$s_1$};
				\draw [black,thick](0, -5) node (u)  {$u$};
				\draw [black,thick](-1, -7) node (x)  {$x$};
				\draw [black,thick](1, -7) node (y)  {$y$};
			}
			\path[draw,thick,black]
			
			(s1) edge node[name=la,pos=0.4] {\color{black}\,\,\, $\delta$} (u)
			(u) edge node[name=la,pos=0.4] {\color{black}$\tau$\quad\quad} (x)
			(u) edge node[name=la,pos=0.4] {\color{black}\,\,\,\,\,\,\, $\Delta$} (y);
			
			%	(r) edge node[name=la,pos=0.6] {\color{black}\quad$\alpha$} (sa)
			
			\draw[dashed, black, line width=0.5mm] (s1)--++(140:1cm);
			\draw[dashed, black, line width=0.5mm] (s1)--++(40:1cm); 
			\draw[dashed, black, line width=0.5mm] (x)--++(200:1cm); 
			\draw[dashed, black, line width=0.5mm] (y)--++(340:1cm);
			\draw[black] (-0.6, -2.9) node {$2$};  
			\draw[black] (0.6, -2.9) node {$\Delta$};  
			\draw[black] (-1.5, -7.5) node {$1$}; 
			\draw[black] (1.5, -7.5) node {$1$};
			
			\draw[black] (0, -8.5) node {$L_1$}; 
			
			\begin{scope}[shift={(6,0)}]

			{\tikzstyle{every node}=[draw ,circle,fill=white, minimum size=0.5cm,
				inner sep=0pt]
				\draw[black,thick](0,-3) node (s1)  {$s_1$};
				\draw [black,thick](0, -5) node (u)  {$u$};
				\draw [black,thick](-1, -7) node (x)  {$x$};
				\draw [black,thick](1, -7) node (y)  {$y$};
			}
			\path[draw,thick,black]
			
			(s1) edge node[name=la,pos=0.4] {\color{black}\,\,\, $1$} (u)
			(u) edge node[name=la,pos=0.4] {\color{black}$\tau$\quad\quad} (x)
			(u) edge node[name=la,pos=0.4] {\color{black}\,\,\,\,\,\,\, $\Delta$} (y);
			
			%	(r) edge node[name=la,pos=0.6] {\color{black}\quad$\alpha$} (sa)
			
			\draw[dashed, black, line width=0.5mm] (s1)--++(140:1cm);
			\draw[dashed, black, line width=0.5mm] (s1)--++(40:1cm); 
			\draw[dashed, black, line width=0.5mm] (x)--++(200:1cm); 
			\draw[dashed, black, line width=0.5mm] (y)--++(340:1cm);
			\draw[black] (-0.6, -2.9) node {$2$};  
			\draw[black] (0.6, -2.9) node {$\Delta$};  
			\draw[black] (-1.5, -7.5) node {$\delta$}; 
			\draw[black] (1.5, -7.5) node {$\delta$};
			\draw[black] (0, -8.5) node {$L_2$}; 
			\end{scope}
			%		\node [thick] at ($(s2)!.5!(sa)$) {\ldots}; 
			%		\node [thick] at ($(sa2)!.5!(sb)$) {\ldots}; 
			%		\node [thick] at ($(sb2)!.5!(sd1)$) {\ldots}; 
			\end{tikzpicture}
		\end{center}
		\vspace{-0.5cm}
		\caption{Coloring of $S(u; s_1, x, y)$}
		\label{L1}
	\end{figure}
	
	\begin{CLA}\label{u-in-p-y}
		$u\in P_y(1,\delta,\varphi_1)$.
	\end{CLA}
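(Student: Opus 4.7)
My plan is to argue by contradiction: assume $u\notin P_y(1,\delta,\varphi_1)$, perform a $(1,\delta)$-swap at $y$, and extract a contradiction from the resulting $(\delta,\Delta)$-chain through $u$.

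First I pin down $\delta:=\varphi_1(s_1u)$. Lemma~\ref{thm:vizing-fan1}\eqref{thm:vizing-fan1b} ensures $s_1$ and $r$ are $(1,\Delta)$-linked under $\varphi_1$, so $\delta\ne 1$. Since $\varphi_1(uy)=\Delta$ and $u$ carries each color at most once, $\delta\ne\Delta$. Since $s_1$ misses $2$ and hence has no $2$-edge, $\delta\ne 2$. Therefore $\delta\in[3,\Delta-1]$, and $\delta$ is either a $2$-inducing or a $\Delta$-inducing color of $F_1$ under $\varphi_1$; Claim~\ref{2-sequence-fan} ensures both inducing sequences are present in $F_1$, so Lemma~\ref{thm:vizing-fan2}\eqref{thm:vizing-fan2-a} is available in either orientation.

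Focusing on the $2$-inducing case, set $z:=\pbar_{F_1}^{-1}(\delta)$. Lemma~\ref{thm:vizing-fan2}\eqref{thm:vizing-fan2-a} gives that $s_1$ (missing $\Delta$) and $z$ are $(\delta,\Delta)$-linked under $\varphi_1$. Performing a $(1,\delta)$-swap at $y$ yields $\varphi_1^*$. Because $\delta\ne 2$, the only $\delta$-edge at $s_1$ is $s_1u$; but $u\notin P_y(1,\delta,\varphi_1)$, so $s_1$ cannot lie on this chain either (it would be a non-endpoint dragging $u$ in via $s_1u$). Hence $\varphi_1^*(s_1u)=\delta$ and $\pbar_{1^*}(y)=\delta$, and the $(\delta,\Delta)$-chain from $y$ under $\varphi_1^*$ traces $y-u-s_1$, terminating at $s_1$. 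Consequently $s_1$ and $z$ are $(\delta,\Delta)$-unlinked under $\varphi_1^*$, contradicting Lemma~\ref{thm:vizing-fan2}\eqref{thm:vizing-fan2-a} applied to $\varphi_1^*$. The $\Delta$-inducing case is handled symmetrically by a preliminary $(\delta,1)$-$(1,\Delta)$-$(\Delta,2)$-swap at $y$ to interchange the roles of $2$ and $\Delta$.

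The main technical obstacle is verifying that $\varphi_1^*$ remains near $(F,\varphi)$-stable so that Lemma~\ref{thm:vizing-fan2}\eqref{thm:vizing-fan2-a} genuinely applies. The only vertices of $V(F_1)$ that could lie on $P_y(1,\delta,\varphi_1)$ as endpoints (and thus have their missing colors changed) are $r$ and $z$. If the chain terminates at $r$, then after the swap both $r$ and $z$ miss $\delta$, destabilizing the multifan; I would restore stability by a compensating shift along the $2$-inducing sequence of $F_1$ from $z$, or by an additional $(1,\delta)$-swap at $z$, before re-invoking Lemma~\ref{thm:vizing-fan2}. If it terminates at $z$, then both $r$ and $z$ miss $1$ after the swap, and I would extract the contradiction directly from Lemma~\ref{thm:vizing-fan1}\eqref{thm:vizing-fan1b}: the required $(1,\Delta)$-linkage between $s_1$ and $r$ is obstructed by the new $(1,\delta)$-path from $y$ to $z$ combined with the existing $(\delta,\Delta)$-segment from $z$ back to $s_1$. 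Navigating these sub-cases cleanly, while ensuring the color structure around $u$ stays coherent, constitutes the main work of the proof.
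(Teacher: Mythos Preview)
Your $2$-inducing half is fine and matches the paper's one-line dismissal: once $u\notin P_y(1,\delta,\varphi_1)$, a $(1,\delta)$-swap at $y$ is automatically $(F_1,\varphi_1)$-stable (because $r$ and $\pbar_{F_1}^{-1}(\delta)$ are already $(1,\delta)$-linked by Lemma~\ref{thm:vizing-fan1}\eqref{thm:vizing-fan1b}, so $P_y(1,\delta,\varphi_1)$ cannot terminate at either), and then $P_y(\delta,\Delta)=yus_1$ contradicts Lemma~\ref{thm:vizing-fan2}\eqref{thm:vizing-fan2-a}. Your stability worries in the last paragraph are therefore about cases that never occur.

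The genuine gap is the $\Delta$-inducing case, which is \emph{not} symmetric and is where essentially all the work lies. The asymmetry is structural: $\varphi_1(uy)=\Delta$, so when $\delta$ is $2$-inducing the $(\delta,\Delta)$-chain from $y$ is forced through $uy$ and $s_1u$, but when $\delta$ is $\Delta$-inducing the relevant linkage is $(2,\delta)$, and the edge $uy$ (colored $\Delta$) does not lie on any $(2,\delta)$-chain. Your proposed preliminary $(\delta,1)$-$(1,\Delta)$-$(\Delta,2)$-swap at $y$ does not repair this: after the first step $y$ misses $\delta$, so the second swap $(1,\Delta)$ at $y$ is not even well-defined in the sense of the paper's convention, and in any event no sequence of swaps \emph{at $y$ alone} can recolor the edge $uy$ from $\Delta$ to $2$ without performing a $(2,\Delta)$-swap that passes through $s_1$ and destroys the multifan (both $2$ and $\Delta$ are missing at $s_1$). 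The paper handles this case by a further split on whether $\tau=\varphi_1(ux)$ is $2$-inducing or $\Delta$-inducing, in each branch performing several swaps at both $x$ and $y$, building Kierstead paths $(r,rs_1,s_1,s_1u,u,ux,x)$ and $(r,rs_1,s_1,s_1u,u,uy,y)$, and invoking Lemma~\ref{Lemma:kierstead path1}(b) to force incompatible linkages with Lemma~\ref{thm:vizing-fan2}\eqref{thm:vizing-fan2-a}. That argument is not a symmetry reduction; you would need to supply it.
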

	\proof[Proof of Claim~\ref{u-in-p-y}]
	Assume to the contrary that $u\not\in P_y(1,\delta,\varphi_1)$. 
	This implies that  $\delta$ is a $\Delta$-inducing color of $F_1$ (since after doing a $(1,\delta)$-swap at $y$,
	  $s_1$ and $y$ are $(\delta,\Delta)$-linked). 
	If $\tau$ is a $\Delta$-inducing of $F_1$, then we let $\varphi_2$ be obtained by performing $(1,2)-(2,\tau)-(1,\tau)$-swaps 
	at both $x$ and $y$ based on the coloring of $L_1$ in Figure~\ref{L1}. 
		Now, we must have that $u\in P_x(1,\delta,\varphi_2)$ or $u\in P_y(1,\delta,\varphi_2)$ since $\delta$
	is either $2$-inducing or $\Delta$-inducing with respect to $F_1$.  Let $\varphi_3$ be obtained from  $\varphi_2$ by 
	performing a $(1,\delta)$-swap at both $x$ and $y$. 
	Then both $K_1=(r,rs_1,s_1,s_u,u,ux,x)$ and $K_2=(r,rs_1,s_1,s_u,u,uy,y)$
	are Kierstead paths with respect to $rs_1$ and $\varphi_3$. 
	Since $d_G(s_1)=\Delta-1$, applying Lemma~\ref{Lemma:kierstead path1} (b), 
	$x$ and $s_1$ are $(\delta, \Delta)$-linked and $y$ and $s_1$ are $(2,\delta)$-linked. 
	However, by Lemma~\ref{thm:vizing-fan2}~\eqref{thm:vizing-fan2-a}, 
	$s_1$ and $\pbar^{-1}_3(\delta)$  are either $(\delta,2)$ or $(\delta, \Delta)$-linked, showing a contradiction. 
	
	Thus we assume  that  $\tau$ is a $2$-inducing color of $F_1$.  
	Based on the coloring of $S(u;s_1,x,y)$ as given in Figure~\ref{L1} $L_1$, 
	we perform $(1,\tau)-(\tau,\delta)$-swaps at both $x$ and $y$ and 
	let $\varphi_2$ be the resulting coloring. Note that either $\varphi_2(s_1u)=\delta$ or 
	$\varphi_2(s_1u)=\tau$. 
	If $\varphi_2(s_1u)=\delta$, then  after doing  a $(1,\delta)$-swap at both $x$
	and $y$, $s_1$ and $y$ are $(1,\Delta)$-linked, which gives  a contradiction to 
	Lemma~\ref{thm:vizing-fan1}~\eqref{thm:vizing-fan1b} that $s_1$ and $r$
	are $(1,\Delta)$-linked. Thus $\varphi_2(s_1u)=\tau$. 
	We first do a $(1,\delta)$-swap at both $x$ and $y$. 
	Then since $\tau$ is a 2-inducing color of $F_1$, $u\in P_y(1,\tau,\varphi_2)$ (since otherwise, after doing a $(1,\tau)$-swap at $y$,  $s_1$ and $y$ are $(\tau,\Delta)$-linked, showing a contradiction to  Lemma~\ref{thm:vizing-fan2} \eqref{thm:vizing-fan2-a}).
	Thus we do a $(1,\tau)$-swap at both $x$ and $y$ and let $\varphi_3$ be the new coloring.  
	Note that $\delta$ is still $\Delta$-inducing and $\tau$ 
	is 2-inducing with respect to $F_1$ and $\varphi_3$.  Thus  $\pbar^{-1}_3(\delta)$ and $\pbar^{-1}_3(\tau)$
	are $(\delta,\tau)$-linked by Lemma~\ref{thm:vizing-fan2} \eqref{thm:vizing-fan2-a}. 
	Let $\varphi_4$ be obtained from $\varphi_3$ by 
	doing  a $(\delta,\tau)$-swap at $y$, and let $F_1^*$ be the resulting multifan.  
	Then $K=(r,rs_1,s_1,s_1u, u, uy, y)$ is a Kierstead path with respect to 
	$rs_1$ and $\varphi_4$.  Since $d_G(s_1)=\Delta-1$, applying Lemma~\ref{Lemma:kierstead path1} (b), 
	$y$ and $s_1$ are $(2,\delta)$-linked. 
	Since $\delta$ is still $\Delta$-inducing and $\tau$ 
	is 2-inducing with respect to $F_1^*$ and $\varphi_4$, we achieve a contradiction to the fact that $s_1$
	and $\pbar^{-1}_4(\delta)$ are $(2,\delta)$-linked by Lemma~\ref{thm:vizing-fan2}~\eqref{thm:vizing-fan2-a}. 
	Therefore it must be the case  $u\in P_y(1,\delta,\varphi_1)$.
	\qed 
	
	Since $u\in P_y(1,\delta, \varphi_1)$, we perform a $(1,\delta)$-swap at both $x$ and $y$
	gives $L_2$ in Figure~\ref{L1}. Call the resulting coloring $\varphi_2$. 
	Now $K=(r,rs_1,s_1,s_u, u,uy,y)$ is a Kierstead path with respect to $rs_1$ 
	and $\varphi_2$. 
	Since $d_G(s_1)=\Delta-1$, by Lemma~\ref{Lemma:kierstead path1} (b), $y$
	and $s_1$ are $(2,\delta)$-linked. 
	It deduces that $\delta$ must be a 2-inducing color of $F_1$ with respect to $\varphi_2$. 
	Recall that $F_1$ still has two sequences with respect to $\varphi_2$.  Let $\gamma$ be a $\Delta$-inducing color 
	of $F_1$. Since $\pbar^{-1}_2(\delta)$ and $\pbar^{-1}_2(\gamma)$
	are $(\delta,\gamma)$-linked by Lemma~\ref{thm:vizing-fan2}~\eqref{thm:vizing-fan2-a}, 
	we do a $(\delta,\gamma)$-swap at $y$ to get $\varphi_3$. Still, 
		$\delta$ is a 2-inducing color and $\gamma$ is a $\Delta$-inducing color 
		of the resulting multifan. By Lemma~\ref{Lemma:kierstead path1} (b), 
 $s_1$ and $y$ are $(2,\gamma)$-linked, showing a 
	contradiction to the fact that $s_1$ and $\pbar^{-1}_3(\gamma)$ are $(2,\gamma)$-linked. 
	
	\smallskip 
	{\noindent \bf \setword{Subcase 2.2.2}{Case 2.2}: $\lambda$ is the last 2-inducing color of $F$.}
	\smallskip 
	
	If $\tau$ is $2$-inducing, then $\tau \prec \lambda$. 
	This gives back to the previous case by exchanging the roles of $\tau$ and $\lambda$.  
	If $\tau$ is $\Delta$-inducing and $\tau$ 
	is not the last $\Delta$-inducing color, then 
	by doing $(\Delta,1)-(1,2)$-swaps at $x$ and $y$, a similar 
	proof follows as in the previous case by exchanging the roles of 2 and $\Delta$. 
	Thus $\tau$ is the last $\Delta$-inducing color of $F$.

	Let $C_u$ be the cycle in $G_\Delta$  containing $u$.  By Theorem~\ref{Thm:vizing-fan2b} \eqref{common2},  for every vertex on $C_u$, its $(\Delta-1)$-neighborhood is $N_{\Delta-1}(u)$. 
	As $|V(C_u)|\ge 3$, there exist $u^*, u'\in V(C_u)\setminus\{u\}$
	such that one of $\varphi(u^*y)$ and $ \varphi(u'y)$ is neither $\tau$ nor $\lambda$. 
	Assume that $\varphi(u^*y)\not\in \{\tau, \lambda\}$. 
	Letting $u^*$ play the role of $u$, we reduce the problem to the previous case, 	
finishing proof of Theorem~\ref{Thm:nonadj_Delta_vertex}.
\end{proof}

%\bibliographystyle{plain}
%\bibliography{SSL-BIB_08-19}

\end{document}